\numberwithin{equation}{section}
\numberwithin{figure}{section}
  \theoremstyle{plain}
  \newtheorem*{thm*}{\protect\theoremname}
\theoremstyle{plain}
\newtheorem{thm}{\protect\theoremname}[section]
  \theoremstyle{definition}
  \newtheorem{example}[thm]{\protect\examplename}
  \theoremstyle{definition}
  \newtheorem{defn}[thm]{\protect\definitionname}
  \theoremstyle{remark}
  \newtheorem{rem}[thm]{\protect\remarkname}
  \theoremstyle{plain}
  \newtheorem{lem}[thm]{\protect\lemmaname}
  \theoremstyle{plain}
  \newtheorem{prop}[thm]{\protect\propositionname}
  \theoremstyle{plain}
  \newtheorem{cor}[thm]{\protect\corollaryname}
\newcommand*{\tarrow}[2][]{\arrow[Rrightarrow, #1]{#2}\arrow[dash, shorten >= 0.5pt, #1]{#2}}
\tikzset{
  column sep/.code=\def\pgfmatrixcolumnsep{\pgf@matrix@xscale*(#1)},
  row sep/.code   =\def\pgfmatrixrowsep{\pgf@matrix@yscale*(#1)},
  matrix xscale/.code=%
    \pgfmathsetmacro\pgf@matrix@xscale{\pgf@matrix@xscale*(#1)},
  matrix yscale/.code=%
    \pgfmathsetmacro\pgf@matrix@yscale{\pgf@matrix@yscale*(#1)},
  matrix scale/.style={/tikz/matrix xscale={#1},/tikz/matrix yscale={#1}}}
\def\pgf@matrix@xscale{1}
\def\pgf@matrix@yscale{1}
\numberwithin{thm}{subsection}
  \providecommand{\corollaryname}{Corollary}
  \providecommand{\definitionname}{Definition}
  \providecommand{\examplename}{Example}
  \providecommand{\lemmaname}{Lemma}
  \providecommand{\propositionname}{Proposition}
  \providecommand{\remarkname}{Remark}
  \providecommand{\theoremname}{Theorem}
\providecommand{\theoremname}{Theorem}
\begin{document}
\global\long\def\Div{\operatorname{Div}}

\global\long\def\ev{\operatorname{ev}}

\global\long\def\EV{\operatorname{EV}}

\global\long\def\NF{\operatorname{NF}}

\global\long\def\nf{\operatorname{nf}}

\global\long\def\Inv{\operatorname{Inv}}

\global\long\def\Gar{\operatorname{Gar}}

\global\long\def\im{\operatorname{im}}

\global\long\def\End{\operatorname{End}}

\global\long\def\sh{\operatorname{sh}}

\global\long\def\h{\operatorname{h}}

\global\long\def\t{\operatorname{t}}

\global\long\def\Std{\operatorname{Std}}

\newrobustcmd{\ifthen}[2]{\ifthenelse{#1}{#2}{}}
 
\newrobustcmd{\typedeuxbase}[3]{
\!\begin{tikzpicture}[
              scale = 0.5, every node/.style = { inner sep = 0mm, outer sep = 0mm },
              onecell/.style = { text height = 1.5ex, text depth = 0ex },
              ]
             
\node [onecell] (u) at (0,0) {#1} ;
\node [onecell] (v) at (1,0) {#2} ;
 
\ifthen{\equal{#3}{1}}
              {
                            \node (uv) at ($ (u.north) + (0.5,0.2) $) {} ;
                            \draw ($ (u.north) + (0.1,-0.1) $) -- (uv.south) -- ($ (v.north) + (-0.1,-0.1) $) ;
              }
\ifthen{\equal{#3}{0}}
              {
                            \node (uv) at ($ (u.north) + (0.5,0) $) {$\scriptstyle\times$} ;
              }
 
\end{tikzpicture}\!
}
 
\newrobustcmd{\typedeux}[1]{\typedeuxbase{$u$}{$v$}{#1}}
 
\newrobustcmd{\typetroisbase}[4]{
\!\begin{tikzpicture}[
              scale = 0.5, every node/.style = { inner sep = 0mm, outer sep = 0mm },
              onecell/.style = { text height = 1.5ex, text depth = 0ex },
              ]
 
\node [onecell] (u) at (0,0) {#1} ;
\node [onecell] (v) at (1,0) {#2} ;
\node [onecell] (w) at (2,0) {#3} ;
 
\node (uv) at ($ (u.north) + (0.5,0.2) $) {} ;
              \draw ($ (u.north) + (0.1,-0.1) $) -- (uv.south) -- ($ (v.north) + (-0.1,-0.1) $) ;
\node (vw) at ($ (v.north) + (0.5,0.2) $) {} ;
              \draw ($ (v.north) + (0.1,-0.1) $) -- (vw.south) -- ($ (w.north) + (-0.1,-0.1) $) ;
 
\ifthen{\equal{#4}{1}}
              {
                            \node (uvw) at ($ (uv.north) + (0.5,0.3) $) {} ;
                            \draw ($ (uv.north) + (0.1,0) $) -- (uvw.south) -- ($ (vw.north) + (-0.1,0) $) ;
              }
\ifthen{\equal{#4}{0}}
              {\node (uvw) at ($ (uv.north) + (0.5,0) $) {$\scriptstyle\times$} ;}
 
\end{tikzpicture}\!
}
 
\newrobustcmd{\typetrois}[1]{\typetroisbase{$u$}{$v$}{$w$}{#1}}
 
\newcommand{\typequatrebase}[7]{
\!\begin{tikzpicture}[
scale = 0.5, every node/.style = { inner sep = 0mm, outer sep = 0mm },
onecell/.style = { text height = 1.5ex, text depth = 0ex },
]
 
\node [onecell] (u) at (0,0) {#1} ;
\node [onecell] (v) at (1,0) {#2} ;
\node [onecell] (w) at (2,0) {#3} ;
\node [onecell] (x) at (3,0) {#4} ;
 
\node (uv) at ($ (u.north) + (0.5,0.2) $) {} ;
\draw ($ (u.north) + (0.1,-0.1) $) -- (uv.south) -- ($ (v.north) + (-0.1,-0.1) $) ;
\node (vw) at ($ (v.north) + (0.5,0.2) $) {} ;
\draw ($ (v.north) + (0.1,-0.1) $) -- (vw.south) -- ($ (w.north) + (-0.1,-0.1) $) ;
\node (wx) at ($ (w.north) + (0.5,0.2) $) {} ;
\draw ($ (w.north) + (0.1,-0.1) $) -- (wx.south) -- ($ (x.north) + (-0.1,-0.1) $) ;
 
\ifthen{\equal{#5}{1}}
{
\node (uvw) at ($ (uv.north) + (0.5,0.3) $) {} ;
\draw ($ (uv.north) + (0.1,0) $) -- (uvw.south) -- ($ (vw.north) + (-0.1,0) $) ;
}
\ifthen{\equal{#5}{0}}
{\node[inner sep=1.75pt] (uvw) at ($ (uv.north) + (0.5,0) $) {$\scriptstyle\times$} ;}
\ifthen{\equal{#6}{1}}
{
\node (vwx) at ($ (vw.north) + (0.5,0.3) $) {} ;
\draw ($ (vw.north) + (0.1,0) $) -- (vwx.south) -- ($ (wx.north) + (-0.1,0) $) ;
}
\ifthen{\equal{#6}{0}}
{\node[inner sep=1.75pt] (vwx) at ($ (vw.north) + (0.5,0) $) {$\scriptstyle\times$} ;}
 
\ifthen{\equal{#7}{1}}
{
\node (uvwx) at ($ (uvw.north) + (0.5,0.3) $) {} ;
\draw ($ (uvw.north) + (0.1,0) $) -- (uvwx.south) -- ($ (vwx.north) + (-0.1,0) $) ;
}
\ifthen{\equal{#7}{0}}
{\node (uvwx) at ($ (uvw.north) + (0.5,0) $) {$\scriptstyle\times$} ;}
\end{tikzpicture}\!
}
 
\newcommand{\typequatre}[3]{\typequatrebase{$u$}{$v$}{$w$}{$x$}{#1}{#2}{#3}}
 
\newcommand{\typecinq}[6]{
\!\begin{tikzpicture}[
scale = 0.5, every node/.style = { inner sep = 0mm, outer sep = 0mm },
onecell/.style = { text height = 1.5ex, text depth = 0ex },
]
\node [onecell] (u) at (0,0) {$u$} ;
\node [onecell] (v) at (1,0) {$v$} ;
\node [onecell] (w) at (2,0) {$w$} ;
\node [onecell] (x) at (3,0) {$x$} ;
\node [onecell] (y) at (4,0) {$y$} ;
 
\node (uv) at ($ (u.north) + (0.5,0.2) $) {} ;
\draw ($ (u.north) + (0.1,-0.1) $) -- (uv.south) -- ($ (v.north) + (-0.1,-0.1) $) ;
\node (vw) at ($ (v.north) + (0.5,0.2) $) {} ;
\draw ($ (v.north) + (0.1,-0.1) $) -- (vw.south) -- ($ (w.north) + (-0.1,-0.1) $) ;
\node (wx) at ($ (w.north) + (0.5,0.2) $) {} ;
\draw ($ (w.north) + (0.1,-0.1) $) -- (wx.south) -- ($ (x.north) + (-0.1,-0.1) $) ;
\node (xy) at ($ (x.north) + (0.5,0.2) $) {} ;
\draw ($ (x.north) + (0.1,-0.1) $) -- (xy.south) -- ($ (y.north) + (-0.1,-0.1) $) ;
 
\ifthen{\equal{#1}{1}}
{
\node (uvw) at ($ (uv.north) + (0.5,0.3) $) {} ;
\draw ($ (uv.north) + (0.1,0) $) -- (uvw.south) -- ($ (vw.north) + (-0.1,0) $) ;
}
\ifthen{\equal{#1}{0}}
{\node[inner sep=1.75pt] (uvw) at ($ (uv.north) + (0.5,0) $) {$\scriptstyle\times$} ;}
\ifthen{\equal{#2}{1}}
{
\node (vwx) at ($ (vw.north) + (0.5,0.3) $) {} ;
\draw ($ (vw.north) + (0.1,0) $) -- (vwx.south) -- ($ (wx.north) + (-0.1,0) $) ;
}
\ifthen{\equal{#2}{0}}
{\node[inner sep=1.75pt] (vwx) at ($ (vw.north) + (0.5,0) $) {$\scriptstyle\times$} ;}
 
\ifthen{\equal{#3}{1}}
{
\node (wxy) at ($ (wx.north) + (0.5,0.3) $) {} ;
\draw ($ (wx.north) + (0.1,0) $) -- (wxy.south) -- ($ (xy.north) + (-0.1,0) $) ;
}
\ifthen{\equal{#3}{0}}
{\node[inner sep=1.75pt] (wxy) at ($ (wx.north) + (0.5,0) $) {$\scriptstyle\times$} ;}
\ifthen{\equal{#4}{1}}
{
\node (uvwx) at ($ (uvw.north) + (0.5,0.3) $) {} ;
\draw ($ (uvw.north) + (0.1,0) $) -- (uvwx.south) -- ($ (vwx.north) + (-0.1,0) $) ;
}
\ifthen{\equal{#4}{0}}
{\node[inner sep=1.75pt] (uvwx) at ($ (uvw.north) + (0.5,0) $) {$\scriptstyle\times$} ;}
\ifthen{\equal{#5}{1}}
{
\node (vwxy) at ($ (vwx.north) + (0.5,0.3) $) {} ;
\draw ($ (vwx.north) + (0.1,0) $) -- (vwxy.south) -- ($ (wxy.north) + (-0.1,0) $) ;
}
\ifthen{\equal{#5}{0}}
{\node[inner sep=1.75pt] (vwxy) at ($ (vwx.north) + (0.5,0) $) {$\scriptstyle\times$} ;}
\ifthen{\equal{#6}{1}}
{
\node (uvwxy) at ($ (uvwx.north) + (0.5,0.3) $) {} ;
\draw ($ (uvwx.north) + (0.1,0) $) -- (uvwxy.south) -- ($ (vwxy.north) + (-0.1,0) $) ;
}
\ifthen{\equal{#6}{0}}
{\node[inner sep=1.75pt] (uvwxy) at ($ (uvwx.north) + (0.5,0) $) {$\scriptstyle\times$} ;}
\end{tikzpicture}\!
}

\title{Coherent presentations of monoids with a right-noetherian Garside
family}

\author{Pierre-Louis Curien}

\address[Pierre-Louis Curien]{Université Paris Cité, CNRS, Inria, IRIF, F-75013, Paris, France}

\email{curien@irif.fr}

\author{Alen \DJ uri\'{c}}

\address[Alen \DJ uri\'{c}]{Université Paris Cité, CNRS, Inria, IRIF, F-75013, Paris, France}

\email{alen.djuric@protonmail.com}

\thanks{%
\noindent\begin{minipage}[b][1\totalheight][t]{0.075\columnwidth}%
\begin{tikzpicture}[scale=0.017, every node/.style={scale=0.017}] \fill[fill={rgb,255:red,0;green,51;blue,153}] (-27,-18) rectangle (27,18); \pgfmathsetmacro\inr{tan(36)/cos(18)} \foreach \i in {0,1,...,11} { \begin{scope}[shift={(30*\i:12)}] \fill[fill={rgb,255:red,255;green,204;blue,0}] (90:2) \foreach \x in {0,1,...,4} { -- (90+72*\x:2) -- (126+72*\x:\inr) }; \end{scope} } \end{tikzpicture}%
\end{minipage}
\begin{minipage}[b][1\totalheight][t]{0.8\columnwidth}%
The second author has received funding from the European Union's Horizon
2020 research and innovation programme under the Marie Sk\l odowska-Curie
grant agreement No. 754362.%
\end{minipage}}

\author{Yves Guiraud}

\address[Yves Guiraud]{Université Paris Cité and Sorbonne Université, CNRS, INRIA, IMJ-PRG,
F-75013, Paris, France }

\email{yves.guiraud@imj-prg.fr}

\date{30 November 2022}
\begin{abstract}
This paper shows how to construct coherent presentations (presentations
by generators, relations and relations among relations) of monoids
admitting a right-noetherian Garside family. Thereby, it resolves
the question of finding a unifying generalisation of the following
two distinct extensions of construction of coherent presentations
for Artin-Tits monoids of spherical type: to general Artin-Tits monoids,
and to Garside monoids. The result is applied to some monoids which
are neither Artin-Tits nor Garside. 

\end{abstract}

\keywords{monoid, coherent presentation, higher rewriting, polygraph, Artin-Tits
monoid, Garside family}

\subjclass[2020]{20M05, 18B40, 18N30, 20F36, 68Q42}

\maketitle
\tableofcontents{}

\section{Introduction}

\subsection{Coherent presentations of monoids}

A monoid can be presented by a generating set and a set of relations
between words over the generating set. A coherent presentation of
a monoid consists of a set of generators, a set of generating relations,
and a set of generating relations among relations, having the property
that, for every pair of parallel sequences of relations, there is
a relation among relations between those two sequences.

Coherent presentations generalise $2$-syzygies for presentations
of groups. They form the first dimensions of polygraphic resolutions
of monoids, from which abelian resolutions can be deduced. For motivation
and context of the notion of coherent presentations, we refer the
reader to~\cite{GGM}.  In particular, it has been proved in~\cite{GGM}
that Deligne's characterisation~\cite{Del} of the weak actions of
an Artin-Tits monoid $B^{+}\left(W\right)$ of spherical type on categories
is equivalent to constructing a certain coherent presentation, denoted
$\Gar_{3}\left(W\right)$ in~\cite{GGM}, of $B^{+}\left(W\right)$.
This construction has been extended in~\cite{GGM}, using methods
from rewriting theory, in two disjoint directions: to general Artin
Tits monoids, and to Garside monoids. Coherent presentations are also
studied in~\cite{EW}, under the name $3$-presentations.

\subsection{Rewriting methods}

Generating relations, when considered directed from left to right
(i.e. as ordered pairs), provide rewriting rules. A presentation
is called terminating if there is no infinite rewriting sequence;
it is called confluent if any two distinct rewriting sequences starting
from the same word can be completed in such a way that they eventually
lead to a common result; it is convergent if it is both terminating
and confluent. A homotopical completion-reduction procedure, developed
in~\cite{GGM}, enriches a terminating presentation to a coherent
one. The main element is Squier's theorem, which allows one to simply
compute generators of the relations among relations for a convergent
presentation. This\uline{} procedure has three stages. Firstly,
a Knuth-Bendix completion procedure enriches a terminating presentation
to a convergent one by adding a (not necessarily finite) number of
relations. Secondly, a Squier completion procedure adjoins relations
among relations, thus providing a coherent presentation of the monoid
admitting the starting presentation. Thirdly, a homotopical reduction
procedure removes redundant relations. These homotopical transformations
of presentations having certain properties are illustrated by the
following diagram and recollected in Section~\ref{sec:homotopical-transformations}.\begin{equation}\label{eq:transformations}
\begin{tikzcd}[row sep=huge, column sep=large] \begin{array}{c}\textrm{coherent}\\ \textrm{reduced} \end{array} & & \begin{array}{c}\textrm{coherent}\\ \textrm{convergent} \end{array} \arrow[ll, "\textrm{\normalsize homotopical}"', "\textrm{\normalsize reduction}"] \\ \\ \textrm{terminating} \arrow[rruu, "\begin{array}{c}\textrm{\normalsize homotopical}\\ \textrm{\normalsize completion} \end{array}" description] \arrow[rr, "\textrm{\normalsize Knuth-Bendix}", "\textrm{\normalsize completion}"'] \arrow[uu, "\textrm{\normalsize homotopical}", "\textrm{\normalsize completion-reduction}" near start]  & & \textrm{convergent} \arrow[uu, "\textrm{\normalsize Squier}"', "\textrm{\normalsize completion}"' near start] \end{tikzcd}
\end{equation}

Let us also illustrate the second stage by giving a preview of Example
\ref{exa:klein-bottle-completion}. Consider the following convergent
presentation of the Klein bottle monoid:
\[
\left\langle a,b\,\middle\vert\,bab\stackrel{\alpha}{\rightarrow}a,baa\stackrel{\beta}{\rightarrow}aab\right\rangle .
\]
There are exactly two critical branchings, i.e. minimal overlaps
of the rewriting steps: $\left\{ \alpha ab,ba\alpha\right\} $ and
$\left\{ \alpha aa,ba\beta\right\} $. Both branchings are confluent.
A Squier completion procedure adds the generators $A$ and $B$ of
the relations among relations. Here are the shapes of $A$ and $B$:
\[
\begin{tikzcd}[row sep=small, column sep=small] 
babab \arrow[Rightarrow, rr, bend left=10, "\alpha ab"{name=U}] \arrow[Rightarrow, rdd, bend right=10, "ba\alpha"'] & & aab \\ \\ & baa \arrow[from=U, phantom, "A"] \arrow[Rightarrow, ruu, bend right=10, "\beta"'] \end{tikzcd}
\qquad\qquad
\begin{tikzcd}[row sep=small, column sep=small] 
babaa \arrow[Rightarrow, rrrr, bend left=10, "\alpha aa"{name=U}] \arrow[Rightarrow, rdd, bend right=10, "ba\beta"'] & & & & aaa \\ \\ & baaab \arrow[Rightarrow, rr, "\beta ab"'{name=D}] & & aabab \arrow[from=U, to=D, phantom, "B"] \arrow[Rightarrow, ruu, bend right=10, "aa\alpha"'] \end{tikzcd}.
\]

In~\cite{GGM}, Gaussent, the third author and Malbos have performed
a homotopical completion-reduction procedure to compute coherent presentations
of two disjoint generalisations of Artin-Tits monoids of spherical
type: general Artin-Tits monoids, and Garside monoids. We recall those
two generalisations in Subsection~\ref{subsec:ggm-coherent-presentations}
as Examples~\ref{exa:artin-tits-coherent} and~\ref{exa:garside-monoid-coherent},
respectively. In~\cite{GMM}, the third author, Malbos and Mimram
have computed coherent presentations of plactic and Chinese monoids
by applying a homotopical completion-reduction procedure.

\subsection{Garside families}

A Garside family in a monoid is a generating family, not minimal in
general, but ensuring some desirable properties. Namely, the notion
of a Garside family~\cite{DDM} is a result of successive generalisations
to wider classes of monoids of a particular type of normal form, first
implicitly hinted in braid monoids by Garside~\cite{Gar} in 1969,
known as the greedy normal form. In particular, it generalises Artin-Tits
monoids and Garside monoids. The greedy normal form is easily computed
as it has very nice locality properties. These notions are recalled
in Section~\ref{sec:garside-families}.

Garside~\cite{Gar} investigated arithmetic properties of braid groups.
He solved the word problem and the conjugacy problem in braid groups
by introducing braid monoids. Among other things, he proved that
the braid monoid $B_{n}^{+}$ is left-cancellati\-ve, and that any
two elements of $B_{n}^{+}$ admit a least common multiple. He also
introduced the Garside element (he called it the fundamental word)
of a braid monoid.

Garside's observations for braid monoids were generalised to Artin-Tits
monoids of spherical type by Brieskorn and Saito~\cite{BS}, and by
Deligne who later explicitly gave Garside's presentation for Artin-Tits
monoids of spherical type in~\cite{Del}. Michel~\cite{Mic} extended
this presentation to all Artin-Tits monoids.

The greedy normal form was later generalised to Artin-Tits monoids,
based on Garside's observations (see~\cite[Introduction]{DDGKM} for
references). Dehornoy and Paris~\cite{DP} introduced Garside monoids
in order to abstract properties which establish the existence of the
greedy normal form. Dehornoy, Digne and Michel~\cite{DDM} further
generalised Garside monoids to categories admitting Garside families
(as recalled for monoids in Subsection~\ref{subsec:garside-family}
here). A thorough development of the notion of a Garside family can
be found in the book~\cite{DDGKM}. Dehornoy and the third author
\cite{DG} introduced monoids admitting quadratic normalisations,
thereby generalising monoids admitting Garside families. We refer
the reader to the survey~\cite{Deh} for an overview of the successive
extensions of the greedy normal form from braid monoids to monoids
admitting left-weighted quadratic normalisations.

\subsection{Contributions}

The objective of the present paper is to unify the two above-mentioned
results of~\cite{GGM} in the same generalisation. Namely, we apply
a homotopical completion-reduction procedure to compute coherent presentations
of a certain class of monoids admitting a Garside family. Our present
contribution has the following two main steps.
\begin{enumerate}
\item First, we use the fact that every left-cancellative monoid $M$ containing
no nontrivial invertible element and for every Garside family $S$
in $M$, there is a presentation, here denoted $\Gar_{2}\left(S\right)$,
having $S\setminus\left\{ 1\right\} $ as generating set, with generating
relations $\alpha$ of the form $s|t=st$, for $s,t\in S\setminus\left\{ 1\right\} $
with $st\in S$ (Proposition~\ref{prop:dg-6.3.1}, adapted from~\cite{DG}).
We observe (Example~\ref{exa:garside's-presentation-artin-tits-monoid})
that Garside's presentation $\Gar_{2}\left(W\right)$ of an Artin-Tits
monoid $B^{+}\left(W\right)$ is a special case, with $S$ being the
Coxeter group $W$. Similarly (Example~\ref{exa:garside's-presentation-garside-monoid}),
Garside's presentation $\Gar_{2}\left(M\right)$ of a Garside monoid
$M$ is another special case of $\Gar_{2}\left(S\right)$.
\item Then, starting from $\Gar_{2}\left(S\right)$, we embark on extending
\cite[Theorem~3.1.3]{GGM} (which we recall in Example~\ref{exa:artin-tits-coherent})
to a wider class of monoids, including left-cancellative noetherian
monoids containing no nontrivial invertible element, admitting a Garside
family. Working in a more general setting, we encounter additional
critical branchings which cannot occur in the case of Artin-Tits or
Garside monoids due to specific properties not shared by Garside families
in general. Therefore, we construct new generating relations among
relations. Conveniently, we then remove all the additional relations
using the homotopical reduction procedure. 
\end{enumerate}
This results in Theorem~\ref{thm:coherent-presentation-garside-family},
our main result,  of which we give here a weaker, but simpler version
(our Corollary~\ref{cor:coherent-presentation-noetherian}). 
\begin{thm*}
Assume that $M$ is a left-cancellative noetherian monoid containing
no nontrivial invertible element, and $S\subseteq M$ is a Garside
family containing $1$. Then $M$ admits the coherent presentation
$\Gar_{3}\left(S\right)$ which extends $\Gar_{2}\left(S\right)$
with the following set of generating relations among relations:\[
\begin{tikzcd}[
row sep=scriptsize, column sep=scriptsize, matrix scale=1, transform shape, nodes={scale=1}] & uv|w \arrow[Rightarrow, rd, bend left=10, "\alpha_{uv,w}"] \arrow[dd, phantom, "A_{u,v,w}"] & \\ u|v|w \arrow[Rightarrow, ru, bend left=10, "\alpha_{u,v}| w"] \arrow[Rightarrow, rd, bend right=10, "u|\alpha_{v,w}"'] & & uvw \\ & u|vw \arrow[Rightarrow, ru, bend right=10, "\alpha_{u,vw}"'] \end{tikzcd},
\]for all $u,v,w\in S\setminus\left\{ 1\right\} $ such that $uv,vw,uvw\in S$.
\end{thm*}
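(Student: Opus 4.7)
The plan is to apply the homotopical completion--reduction procedure summarised in diagram~\eqref{eq:transformations} to the starting presentation $\Gar_2(S)$ provided by Proposition~\ref{prop:dg-6.3.1}: first orient the relations as rewriting rules, then complete to a convergent presentation, apply Squier completion to obtain a coherent--convergent presentation, and finally collapse the superfluous data so that exactly the triangular $3$-cells $A_{u,v,w}$ of the statement remain.

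First I would establish termination of $\Gar_2(S)$ as a rewriting system. Every rule $\alpha_{s,t}\colon s|t \to st$ replaces two letters of the alphabet $S\setminus\{1\}$ by a single one, so each rewriting step strictly decreases the length of the word; the noetherianity hypothesis on $M$ rules out any residual pathology, placing $\Gar_2(S)$ at the bottom-left of diagram~\eqref{eq:transformations}.

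Next I would enumerate critical branchings. Since each left-hand side $s|t$ has length two, critical overlaps occur at words $u|v|w$ with $u,v,w\in S\setminus\{1\}$ and $uv,vw\in S$, giving the branching $\{\alpha_{u,v}|w,\ u|\alpha_{v,w}\}$. This splits into two cases. When $uvw\in S$, both reducts $uv|w$ and $u|vw$ admit one further rewriting step down to $uvw$, producing precisely the confluence square of the statement, decorated by the $3$-cell $A_{u,v,w}$ during Squier completion. When $uvw\notin S$, neither reduct is further reducible by the rules of $\Gar_2(S)$, so Knuth--Bendix completion must introduce supplementary rewriting rules sending $uv|w$ and $u|vw$ to their greedy $S$-normal forms, and Squier completion then adjoins extra $3$-cells witnessing the confluence of the resulting new branchings; the locality properties of the greedy normal form recalled in Section~\ref{sec:garside-families} ensure that this process is well-behaved.

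The main obstacle lies in the final homotopical reduction. I expect each supplementary rule introduced in the case $uvw\notin S$ to pair canonically with one of the supplementary Squier $3$-cells into a collapsible triple in the sense of Section~\ref{sec:homotopical-transformations}. Performing all such collapses simultaneously should remove every object added in the case $uvw\notin S$ while preserving the triangular cells $A_{u,v,w}$ of the case $uvw\in S$, yielding the coherent presentation $\Gar_3(S)$ described in the statement.
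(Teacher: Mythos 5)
Your overall skeleton (completion--reduction applied to $\Gar_{2}\left(S\right)$, with the two cases $uvw\in S$ and $uvw\notin S$ at the overlap $u|v|w$) matches the paper's, but the proposal has a genuine gap at each of the two places where the real work happens. First, termination of the \emph{completed} system. Your length argument handles the $\alpha$-rules, but every supplementary rule added in the case $uvw\notin S$ rewrites a length-two word to a length-two word, so word length says nothing about it, and ``noetherianity rules out any residual pathology'' is not an argument. The paper devotes Proposition~\ref{prop:termination} to exactly this point: it orients the new rules as $\beta_{u,v,w}:u|vw\Rightarrow uv|w$ (a single rule joining the two reducts, as Knuth--Bendix actually prescribes, oriented by hand per Remark~\ref{rem:alternative-knuth-bendix} --- not, as you write, rules sending each reduct to its greedy normal form), and then shows that along a hypothetical infinite path of $\beta$-steps the leftmost letter strictly increases for left divisibility while remaining, by Lemma~\ref{lem:garside-implies-left-weighted}, a left divisor of the head of the $S$-normal form; right-noetherianity of $S$ (which follows from noetherianity of $M$) forbids such a chain. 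If instead you really intend to adjoin all rules $s|t\Rightarrow N^{S}\left(s|t\right)$, you land on the convergent presentation of Proposition~\ref{cor:dg-6.3.3}, a legitimate but different starting point whose Squier completion and subsequent reduction you would then have to compute from scratch; your sketch does neither.

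Second, the reduction step. Your expectation that each supplementary rule pairs canonically with one supplementary $3$-cell, and that collapsing these pairs removes everything extraneous, is precisely where the theorem's content lies, and it is false in this naive form: the completed presentation has far more critical branchings than supplementary rules. In the paper's $\underline{\Gar}_{2}\left(S\right)$, besides the family $B$ (which does pair with and collapse the $\beta$-rules, forming $\Gamma_{3}$), the overlaps of $\beta$ with $\alpha$ and with $\beta$ produce ten further families of $3$-cells ($C$, $D$, $E$, $E'$, $F$, $F'$, $G$, $G'$, $H$, $I$), including \emph{equal-source} branchings $\left\{ \beta_{u,v_{1},w_{1}},\beta_{u,v_{2},w_{2}}\right\} $ whose confluence is where the noetherianity hypothesis genuinely enters: it yields right-mcms (as in the proof of Corollary~\ref{cor:coherent-presentation-noetherian}, via~\cite[Proposition~II.2.40]{DDGKM}), and closure of $S$ under right-mcm and right divisor comes from Proposition~\ref{prop:garside-implies-closed-comultiple} and Lemma~\ref{lem:closed-comultiple-iff-closed-mcm}. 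Eliminating these ten families requires exhibiting an explicit generating triple confluence ($3$-sphere) targeting each, together with a well-founded order such as $G'>F'>E'>I>H>\cdots>C$ --- including the three families $E'$, $F'$, $G'$ that have no Artin-Tits or Garside-monoid counterpart, since a general Garside family is closed only under right divisor (Remark~\ref{rem:new-3-cells}), and for which the paper constructs new triple confluences in Subsection~\ref{subsec:reduction-garside-family}. Your proposal neither identifies these branchings nor supplies the collapsing $3$-spheres, so ``performing all such collapses simultaneously'' asserts the conclusion rather than proving it.
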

Note that $A_{u,v,w}$ can be read as a relation ensuring associativity.
We shall reach $\Gar_{3}\left(S\right)$ by applying a homotopical
completion-reduction procedure to the presentation $\Gar_{2}\left(S\right)$.

In Section~\ref{sec:examples}, the result is used to compute coherent
presentations of some monoids which are neither Artin-Tits nor Garside,
and to construct a finite coherent presentation of the Artin-Tits
monoid of type $\widetilde{A}_{2}$, taking a finite generating set.
In some cases, homotopical reduction can be carried further: as a
matter of fact, in Subsection~\ref{subsec:non-spherical}, we prove
that Artin's presentation of the Artin-Tits monoid of type $\widetilde{A}_{2}$
is coherent (with the empty set of generating relations among relations).

We mainly consider monoids because that is where our applications
lie, but the approach presented here can be extended to categories.

\subsection{Acknowledgements}

The authors would like to thank the anonymous reviewer(s) for his/her/their helpful comments; they greatly helped us to improve the quality of this article.

\section{\label{sec:presentations-categories-polygraphs}Presentations of
monoids by polygraphs}

In this section, we briefly recall the notions concerning polygraphic
presentations of monoids (technical elaboration whereof can be found
in~\cite{GGM}). Basic terminology is given in Subsection~\ref{subsec:presentations-by-polygraphs}.
Some basic notions of polygraphic rewriting theory are recollected
in Subsection~\ref{subsec:rewriting-2-polygraphs}. Subsection~\ref{subsec:coherent-presentations}
recalls the notion of coherent presentation.

Throughout the present article, $2$-categories and $3$-categories
are always assumed to be strict (see e.g.~\cite[Section~2]{GM2}).
In diagrams, distinct arrows are used to denote $k$-cells for low
$k$: $\rightarrow$, $\Rightarrow$, $\Rrightarrow$ for $k$ equal
to $1$, $2$ and $3$, respectively.

\subsection{\label{subsec:presentations-by-polygraphs}Presentations by $2$-polygraphs}

Polygraphs encompass words, rewriting rules, and homotopical properties
of the rewriting systems in the same globular object. They provide
a generalisation of a presentation of a monoid by generators and relations
to the higher categories which are free up to codimension $1$.

A polygraph is a higher-dimensional generalisation of a graph. Recall
that a (directed) graph is a pair $\left(X_{0},X_{1}\right)$ of sets,
together with two maps, called source and target, from $X_{1}$ to
$X_{0}$. A $0$-polygraph $\left(X_{0}\right)$ is a set, a \textbf{$1$-polygraph}
$\left(X_{0},X_{1}\right)$ is a graph. The free category generated
by a $1$-polygraph $\left(X_{0},X_{1}\right)$ is denoted by $X_{1}^{\ast}$.
A \textbf{$2$-polygraph} is a triple $X=\left(X_{0},X_{1},X_{2}\right)$,
where $\left(X_{0},X_{1}\right)$ is a $1$-polygraph and $X_{2}$
is a set of \textbf{$1$-spheres}, i.e. pairs of parallel paths, in
$X_{1}^{\ast}$.

For a $2$-polygraph $X$, the \textbf{category presented by $X$},
denoted $\overline{X}=X_{1}^{\ast}/X_{2}$, is obtained by factoring
out generating $2$-cells, regarded as relations among $1$-cells
of $X_{1}^{\ast}$. For a monoid $M$, a \textbf{presentation of }$M$
is a $2$-polygraph $X$ such that $M$ is isomorphic to $\overline{X}$.
In this case, which we are mainly interested in, $X_{0}$ is a singleton
so any pair of paths in $X_{1}^{\ast}$ forms a $1$-sphere, and~$X_{1}^{\ast}$
is the free monoid generated by the set $X_{1}$. Elements of $X_{k}$
are called \textbf{generating $k$-cells}.
\begin{example}[The standard presentation]
\label{exa:standard-presentation}Let \textbf{$M$} be a monoid.
The \textbf{standard presentation} of $M$ is the $2$-polygraph $\Std_{2}\left(M\right)$
consisting of:
\begin{itemize}
\item one generating $0$-cell $x$;
\item a generating $1$-cell $\widehat{u}$ for every element $u$ of $M$;
\item a generating $2$-cell $\gamma_{u,v}:\widehat{u}\widehat{v}\Rightarrow\widehat{uv}$
for every pair of elements $u$ and $v$ of $M$;
\item one generating $2$-cell $\iota_{x}:1_{x}\Rightarrow\widehat{1_{x}}$
.
\end{itemize}
\end{example}

\subsection{\label{subsec:rewriting-2-polygraphs}Rewriting properties of $2$-polygraphs}

Let us adopt some basic terminology from string rewriting. If $S$
is a set, $S^{\ast}$ denotes the free monoid over $S$. Elements
of $S$ and $S^{\ast}$ are respectively called letters and \textbf{words}.
We write $u|v$ for the concatenation of two words $u$ and $v$,
sometimes omitting the separation symbol when that does not cause
ambiguity. Let $M$ be a monoid generated by a set $S$. A \textbf{normal
form} for $M$ with respect to $S$ is a set-theoretic section of
the evaluation map (canonical projection) $\ev:S^{\ast}\to M$. In
other words, a normal form maps elements of $M$ to distinguished
representative words. A word $s_{1}|\cdots|s_{p}$ is said to be a
\textbf{decomposition} of an element $f$ of $M$ if the equality
$s_{1}\cdots s_{p}=f$ holds in $M$.

Assume that a $2$-polygraph $X$ is a presentation of a monoid $M$.
Generating $2$-cells of $X$ are called \textbf{rewriting rules}.
The \textbf{free $2$-category over $X$}, denoted $X_{2}^{\ast}=X_{1}^{\ast}\left[X_{2}\right]$,
is obtained by adjoining to $X_{1}^{\ast}$ all the formal compositions
of elements of $X_{2}$, treated as formal $2$-cells. Standard notions
from rewriting theory naturally translate into the framework of polygraphs.
A \textbf{rewriting step} of a $2$-polygraph $X$ is a $2$-cell
of the free category $X_{2}^{\ast}$ which contains a single generating
$2$-cell of $X$, here considered as a transformation of its source
into its target. So, a rewriting step has a shape
\[
\begin{tikzcd} \bullet \arrow[r, "w"] & \bullet \arrow[r, shorten <=-2pt,  shorten >=-2pt, bend left=50, "u"] \arrow[r, phantom, bend left=50, ""{name=U, below}] \arrow[r, shorten <=-2pt,  shorten >=-2pt, bend right=50, "v"'] \arrow[r, phantom, bend right=50, ""{name=D}] & \bullet \arrow[Rightarrow, from=U, to=D, shorten <=2pt, shorten >=2pt, "\alpha"] \arrow[r, shorten <=-2pt,  shorten >=-2pt, "w'"] & \bullet \end{tikzcd},
\]
where $\alpha:u\Rightarrow v$ is a generating $2$-cell of $X$,
and $w$ and $w'$ are $1$-cells of $X_{2}^{\ast}$, and the $0$-cell
is denoted by $\bullet$.

Let $u$ and $v$ be $1$-cells of $X_{2}^{\ast}$. It is said that
$u$ \textbf{rewrites} to $v$ if there is a finite composable sequence
of rewriting steps with source $u$ and target $v$. A $1$-cell $u$
is \textbf{reduced} if there is no rewriting step whose source is
$u$. 

Let $X$ be a $2$-polygraph. A \textbf{termination order} on $X$
is a well-founded order relation $\leq$ on parallel $1$-cells of
$X_{2}^{\ast}$ enjoying the following properties:
\begin{itemize}
\item the compositions by $1$-cells of $X_{2}^{\ast}$ are strictly monotone
in both arguments, i.e. $\leq$ is compatible with the composition
of $1$-cells;
\item for every generating $2$-cell $\alpha$ of $X$, the strict inequality
$s\left(\alpha\right)>t\left(\alpha\right)$ holds.
\end{itemize}
A $2$-polygraph $X$ is \textbf{terminating} if it has no infinite
sequence of rewriting steps. Admitting a termination order is equivalent
to being terminating (in a terminating polygraph, a termination order
is obtained by putting $u>v$ for $1$-cells $u$ and $v$ if $u$
rewrites to $v$).

A \textbf{branching} of a $2$-polygraph $X$ is an unordered pair
$\left\{ \alpha,\beta\right\} $ of sequences of rewriting steps of
$X_{2}^{\ast}$ having the same source, called the source of branching.
If $\alpha$ and $\beta$ are rewriting steps, a branching $\left\{ \alpha,\beta\right\} $
is called \textbf{local}. A local branching is \textbf{trivial} if
it has one of the following two shapes: $\left\{ \alpha,\alpha\right\} $,
or $\left\{ \alpha v,u\beta\right\} $ for $u=s\left(\alpha\right)$
and $v=s\left(\beta\right)$. Local branchings can be compared by
the order $\preccurlyeq$ generated by the relations $\left\{ \alpha,\beta\right\} \preccurlyeq\left\{ u\alpha v,u\beta v\right\} $
given for every local branching $\left\{ \alpha,\beta\right\} $ and
all possible $1$-cells $u$ and $v$ of $X_{2}^{\ast}$. A minimal
nontrivial local branching is called \textbf{critical}. A branching
$\left\{ \alpha,\beta\right\} $ is confluent if $\alpha$ and $\beta$
can be completed into sequences having the same target. A $2$-polygraph
$X$ is \textbf{confluent} (resp.\ locally confluent, resp.\ critically
confluent) if all its branchings (resp.\ local branchings, resp.\ critical
branchings) are confluent. If $X$ is terminating and confluent, it
is called \textbf{convergent}. A convergent $2$-polygraph $X$ is
called a \textbf{convergent presentation} of any category isomorphic
to $\overline{X}$. In that case, for every $1$-cell $u$ of~$X^{\ast}$,
there is a unique reduced word, denoted by $\widehat{u}$, to which
$u$ rewrites.

Two basic results of rewriting theory concerning confluence, called
Newman's lemma~\cite[Theorem~3]{New} and the critical branchings
theorem respectively, are also valid for polygraphs.
\begin{thm}[{\cite[Theorem~3.1.6]{GM2}}]
\label{thm:newman-critical}Let $X$ be a $2$-polygraph.
\begin{enumerate}
\item If $X$ is terminating, then $X$ is confluent if, and only if, it
is locally confluent.
\item $X$ is locally confluent if, and only if, it is critically confluent.
\end{enumerate}
\end{thm}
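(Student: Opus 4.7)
The forward directions of both equivalences are immediate: confluence restricts to local confluence on single-step branchings, and critical branchings are local by definition, so local confluence restricts to critical confluence. I plan to concentrate on the two converses.

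For Part~(1), I would reproduce Newman's classical diagram chase in the polygraphic framework, using well-founded induction along the termination order on parallel $1$-cells of $X_{2}^{\ast}$. Fix a source $u$ and assume as induction hypothesis that every $1$-cell strictly smaller than $u$ is confluent. Given two rewriting sequences $u\to v$ and $u\to w$, if either is empty the result is immediate; otherwise, factor each through a first rewriting step as $u\Rightarrow u_{1}\to v$ and $u\Rightarrow u_{1}'\to w$. Local confluence applied at $u$ yields a common reduct $z$ of $u_{1}$ and $u_{1}'$. Since $u_{1}<u$ and $u_{1}'<u$ by the defining property of a termination order, applying the induction hypothesis first at $u_{1}$ (to merge $v$ and $z$ into some $z'$) and then at $u_{1}'$ (to merge $w$ and $z'$) produces a common reduct of $v$ and $w$. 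Well-foundedness of the termination order, equivalent to termination of $X$ as recorded earlier in the paper, legitimises this induction.

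For Part~(2), the key observation is that confluence of a local branching is preserved under whiskering by $1$-cells. Given a nontrivial local branching $\left\{ \alpha,\beta\right\}$, I would proceed by well-founded induction on $\preccurlyeq$: either $\left\{ \alpha,\beta\right\}$ is itself critical and the hypothesis applies, or there exist $1$-cells $p,q$ of $X_{1}^{\ast}$ and a strictly smaller nontrivial local branching $\left\{ \alpha_{0},\beta_{0}\right\}$ such that $\alpha=p\alpha_{0}q$ and $\beta=p\beta_{0}q$. The induction hypothesis supplies confluence tails $\gamma_{0},\delta_{0}$ for $\left\{ \alpha_{0},\beta_{0}\right\}$, and whiskering these by $p$ and $q$ gives a confluence diagram for $\left\{ \alpha,\beta\right\}$. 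Trivial local branchings are disposed of directly: $\left\{ \alpha,\alpha\right\}$ closes by identity tails, and a disjoint-support branching $\left\{ \alpha v,u\beta\right\}$ closes by the obvious diamond that performs the two rewriting steps in the opposite order.

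The principal subtlety, which is more notational than conceptual, lies in setting up the descent along $\preccurlyeq$ in Part~(2): one must verify that $\preccurlyeq$ is well-founded---because each strict step strictly reduces the length of the source $1$-cell---and that nontriviality is preserved under the descent, so that critical (minimal nontrivial) branchings are indeed reached from below every nontrivial local branching. Both facts follow from a short inspection of the two trivial shapes and of the generating relation defining $\preccurlyeq$; once in place, the diagrammatic arguments outlined above conclude the proof.
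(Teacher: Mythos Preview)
The paper does not supply its own proof of this theorem: it is stated with a citation to \cite[Theorem~3.1.6]{GM2} and introduced as a polygraphic instance of Newman's lemma and the critical branchings theorem, with no further argument. Your sketch is the standard proof of both results and is essentially correct; there is nothing in the paper to compare it against beyond the external reference.
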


As a consequence of Theorem~\ref{thm:newman-critical}, a $2$-polygraph
is convergent if, and only if, it is terminating and its critical
branchings are confluent.
\begin{example}
Consider the free abelian monoid:
\begin{equation}
\mathbb{N}^{3}=\left\langle a,b,c\,\middle\vert\,ba\stackrel{\alpha}{\Rightarrow}ab,cb\stackrel{\beta}{\Rightarrow}bc,ca\stackrel{\gamma}{\Rightarrow}ac\right\rangle .\label{eq:free_abelian_N^3}
\end{equation}

This presentation~\eqref{eq:free_abelian_N^3} admits the following
termination order: comparing the lengths of words, then applying lexicographic
order, induced by $a<b<c$, if words have the same length. Hence,
it is terminating.

Let us illustrate confluence of~\eqref{eq:free_abelian_N^3} on the
unique critical branching $\left\{ \beta a,c\alpha\right\} $:
\[
\begin{tikzcd}[row sep=scriptsize, column sep=normal] & bca \arrow[Rightarrow, r, "b\gamma"] & bac \arrow[Rightarrow, rd, bend left=15, "\alpha c"] \\ cba \arrow[Rightarrow, ru, bend left=15, "\beta a"] \arrow[Rightarrow, rd, bend right=15, "c\alpha"'] & & & abc \\ & cab \arrow[Rightarrow, r, "\gamma b"'] & acb \arrow[Rightarrow, ru, bend right=15, "a\beta"'] \end{tikzcd}.
\]

Thus, the presentation~\eqref{eq:free_abelian_N^3} is convergent,
by Theorem~\ref{thm:newman-critical}.
\end{example}

\subsection{\label{subsec:coherent-presentations}Coherent presentations}

A $2$-category (resp.\ $3$-category) is called a $\left(2,1\right)$-category
(resp.\ a $\left(3,1\right)$-category) if its $2$-cells (resp.\ $2$-cells
and $3$-cells) are invertible. For a $2$-polygraph $X$, the \textbf{free
$\left(2,1\right)$-category over $X$}, denoted $X_{2}^{\top}=X_{1}^{\ast}\left(X_{2}\right)$,
is constructed by adjoining to $X_{1}^{\ast}$ all the formal compositions
of elements of $X_{2}$ and formal inverses of elements of $X_{2}$,
and then factoring out the compositions of elements with their corresponding
inverses. A \textbf{$\left(3,1\right)$-polygraph} is a quadruple
$X=\left(X_{0},X_{1},X_{2},X_{3}\right)$, where $\left(X_{0},X_{1},X_{2}\right)$
is a $2$-polygraph and $X_{3}$ is a set of \textbf{$2$-spheres},
i.e. pairs of parallel paths of $2$-cells, in $X_{2}^{\top}$. For
a $\left(3,1\right)$-polygraph $X$, the \textbf{free $\left(3,1\right)$-category
over $X$}, denoted $X_{3}^{\top}=X_{2}^{\top}\left(X_{3}\right)$,
is constructed by adjoining to $X_{2}^{\top}$ all the formal compositions
of elements of $X_{3}$ and formal inverses of elements of $X_{3}$,
and then factoring out the compositions of elements with their corresponding
inverses. A $\left(3,1\right)$-polygraph is called convergent if
its underlying $2$-polygraph is. The \textbf{category presented by
a $\left(3,1\right)$-polygraph} $X$ is again $\overline{X}$, the
category presented by its underlying $2$-polygraph. An \textbf{extended
presentation of a monoid }$M$ is a $\left(3,1\right)$-polygraph
$X$ such that $M$ is isomorphic to $\overline{X}$. 

\begin{defn}
A \textbf{coherent presentation of} a monoid \textbf{$M$} is an extended
presentation $\left(X_{0},X_{1},X_{2},X_{3}\right)$ of $M$ such
that factoring out elements of $X_{3}$, leaves only trivial $2$-spheres
(where the parallel paths are equal).
\end{defn}

\begin{example}[The standard coherent presentation]
Let us extend $\Std_{2}\left(M\right)$ from Example~\ref{exa:standard-presentation}
with the following $3$-cells
\[
\begin{tikzcd}[row sep=scriptsize] & \widehat{uv}\widehat{w} \arrow[Rightarrow, rd, bend left=10, "\gamma_{uv,w}\\"] \tarrow[shorten <=7pt, shorten >=7pt, "A_{u,v,w}"]{dd} & \\ \widehat{u}\widehat{v}\widehat{w} \arrow[Rightarrow, ru, bend left=10, "\gamma_{u,v}\widehat{w}"] \arrow[Rightarrow, rd, bend right=10, "\widehat{u}\gamma_{v,w}"'] & & \widehat{uvw} \\ & \widehat{u}\widehat{vw} \arrow[Rightarrow, ru, bend right=10, "\gamma_{u,vw}"'] \end{tikzcd}
\qquad
\begin{tikzcd}[row sep=scriptsize, column sep=scriptsize] & \widehat{1_{x}}\widehat{u} \arrow[Rightarrow, rdd, bend left=10,  shorten <=-2pt, shorten >=-2pt, "\gamma_{1_{x},u}"{name=U}] & \\ \\ \widehat{u} \arrow[Rightarrow, ruu, bend left=10,  shorten <=-2pt, shorten >=-2pt, "\iota_{x}\widehat{u}"] \arrow[equal, rr, bend right=10, ""{name=D}] & & \widehat{u} \tarrow[from=1-2, to=D, shorten <=7pt, shorten >=1pt, "L_{u}"]{} \end{tikzcd}
\qquad
\begin{tikzcd}[row sep=scriptsize, column sep=scriptsize] & \widehat{u}\widehat{1_{y}} \arrow[Rightarrow, rdd, bend left=10,  shorten <=-2pt, shorten >=-2pt, "\gamma_{u,1_{y}}"{name=U}] & \\ \\ \widehat{u} \arrow[Rightarrow, ruu, bend left=10,  shorten <=-2pt, shorten >=-2pt, "\widehat{u}\iota_{y}"] \arrow[equal, rr, bend right=10, ""{name=D}] & & \widehat{u} \tarrow[from=1-2, to=D, shorten <=7pt, shorten >=1pt, "R_{u}"]{} \end{tikzcd}
\]
for every triple $u$, $v$, $w$ of elements of $M$. The resulting
$\left(3,1\right)$-polygraph, denoted by $\Std_{3}\left(M\right)$,
is called the \textbf{standard coherent presentation} of $M$ (see
\cite[Subsection~3.3.3]{Gui} for the explanation why $\Std_{3}\left(M\right)$
is, indeed, a coherent presentation).
\end{example}

\section{\label{sec:homotopical-transformations}Homotopical transformations
of polygraphs}

This section elaborates the diagram~\eqref{eq:transformations}, by
recalling the notion of homotopical completion-reduction, introduced
in~\cite{GGM}. Subsection~\ref{subsec:knuth-bendix} recollects the
Knuth-Bendix completion procedure which transforms a terminating $2$-polygraph
into a convergent one. Subsection~\ref{subsec:squier-completion}
recalls the Squier completion procedure which upgrades a convergent
$2$-polygraph to a convergent coherent $\left(3,1\right)$-polygraph.
In Subsection~\ref{subsec:reduction}, we report on the homotopical
reduction procedure which turns a coherent $\left(3,1\right)$-polygraph
into a coherent one having fewer generating cells. Finally, Subsection
\ref{subsec:special-case} describes a particular method for obtaining
a homotopical reduction in case when the starting coherent $\left(3,1\right)$-polygraph
is also convergent.

\subsection{\label{subsec:knuth-bendix}Knuth-Bendix completion}

Starting with a terminating $2$-polygraph $X$, equi\-pped with
a total termination order $\leq$, the Knuth-Bendix completion procedure
adjoins generating $2$-cells aiming to produce a convergent $2$-polygraph,
which presents a category presented by $X$. It works by iteratively
examining all the critical branchings and adjoining a new generating
$2$-cell whenever the branching is not already confluent. Namely,
for a critical branching $\left\{ \alpha,\beta\right\} $, if $\widehat{t\left(\alpha\right)}>\widehat{t\left(\beta\right)}$
(resp.\ $\widehat{t\left(\beta\right)}>\widehat{t\left(\alpha\right)}$),
a generating $2$-cell $\gamma:\widehat{t\left(\alpha\right)}\Rightarrow\widehat{t\left(\beta\right)}$
(resp.\ $\gamma:\widehat{t\left(\beta\right)}\Rightarrow\widehat{t\left(\alpha\right)}$)
is adjoined, thus forcing the confluence of the branching: 
\[
\begin{tikzcd}[row sep=scriptsize, column sep=normal] & t\left(\alpha\right) \arrow[Rightarrow, r] & \widehat{t\left(\alpha\right)} \arrow[Rightarrow, dd, "\gamma", dashed] \\ \ast \arrow[Rightarrow, ru, bend left=15, "\alpha"] \arrow[Rightarrow, rd, bend right=15, "\beta"'] & & \\ & t\left(\beta\right) \arrow[Rightarrow, r] & \widehat{t\left(\beta\right)} \end{tikzcd}.
\]
If new critical branchings are created by adjoining additional generating
$2$-cells, confluence of such critical branchings is examined. For
details, see~\cite[3.2.1]{GM2}. This procedure is not guaranteed
to terminate. In fact, its termination depends on the chosen termination
order (see~\cite[Example~6.3.1]{ECHLPT}). If it does terminate, the
result is a convergent $2$-polygraph. Otherwise, it produces an increasing
sequence of $2$-polygraphs, and the result is the union of this sequence.
Either way, the result is called a \textbf{Knuth-Bendix completion}
of $X$. Note that different orders of examining critical branchings
may result in different $2$-polygraphs.
\begin{thm}[{\cite[Theorem~3.2.2]{GM2}}]
\label{thm:knuth-bendix}Assume that $X$ is a $2$-polygraph, equipped
with a total termination order, presenting a monoid $M$. Then every
Knuth-Bendix completion of $X$ is a convergent presentation of $M$.
\end{thm}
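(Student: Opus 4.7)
The plan is to verify two claims in sequence: first, that the Knuth-Bendix completion $X'$ remains a presentation of $M$; second, that $X'$ is convergent. Together these establish the theorem.

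For the first claim, I would show that each generating $2$-cell adjoined during the procedure corresponds to a valid equation already derivable in $M$. Whenever a critical branching $\{\alpha,\beta\}$ with common source $u$ is examined, the targets $t(\alpha)$ and $t(\beta)$ both rewrite (in the current polygraph) to their respective reduced forms $\widehat{t(\alpha)}$ and $\widehat{t(\beta)}$; since all of these words represent the same element of $M$ as $u$, the new $2$-cell $\gamma$ between the two reduced forms is a consequence of the preceding relations. Hence the presented monoid is unchanged at each stage, and in the non-terminating case, passing to the union of the resulting increasing sequence of $2$-polygraphs preserves this property.

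For the second claim, termination of $X'$ follows by extending the total termination order $\leq$ of $X$ without modification: by construction every new generating $2$-cell $\gamma$ is oriented so that $s(\gamma) > t(\gamma)$, and $\leq$ remains well-founded and compatible with composition on the extended polygraph. To establish confluence, I would invoke Theorem~\ref{thm:newman-critical}, reducing the task to showing that every critical branching of $X'$ is confluent. Any critical branching of $X'$ consists of only two rewriting steps, each using a single generating $2$-cell of $X'$; such a branching therefore exists already at some finite stage of the procedure, and when the procedure examines it, it either confirms its confluence or adjoins a $2$-cell forcing it. Either way, the branching is confluent in $X'$.

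The principal technical obstacle is the non-terminating case: one must ensure that the limit polygraph genuinely inherits critical confluence from the finite stages. This requires a fair enumeration strategy guaranteeing that every critical branching arising at some stage is eventually examined at a later stage, rather than being indefinitely postponed while other branchings are addressed. Once such a fair scheduling is in place, every critical branching of $X'$ is handled at some finite point, and the proof concludes by applying Theorem~\ref{thm:newman-critical} to the terminating and critically confluent $2$-polygraph $X'$.
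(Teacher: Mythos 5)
Your proposal is correct and reconstructs the standard argument for this result, which the paper itself does not prove but cites from~\cite[Theorem~3.2.2]{GM2}: the presented monoid is preserved because each adjoined $2$-cell relates two words already rewritable from a common source, termination holds because the unchanged total termination order orients every new rule (totality being exactly what makes the two distinct normal forms comparable), and confluence follows from Theorem~\ref{thm:newman-critical} together with the observation that a confluence diagram established at a finite stage persists in the union, since criticality of a branching depends only on the pair of rules involved. The fairness requirement you single out is indeed the one delicate point in the non-terminating case, but in the paper's (and~\cite{GM2}'s) formulation it is built into the definition of the procedure, which iteratively examines \emph{all} critical branchings, so your argument needs nothing beyond what you describe.
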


\begin{rem}
\label{rem:alternative-knuth-bendix}The Knuth-Bendix completion
procedure, as described above, requires not only termination, but
also the presence of a total termination order, to be able to orient
the generating $2$-cells which are added, and to be able to maintain
the termination during the completion. There is an alternative approach.
Namely, we can orient the newly added generating $2$-cells \textquotedbl{}by
hand\textquotedbl{}, according to our inspiration, and verify after
each addition in an ad hoc manner whether we maintain a terminating
presentation, without having defined a total order at the beginning
(we shall do this in the proof of Proposition~\ref{prop:completion-assuming-termination}).
Therefore, we can invoke Theorem~\ref{thm:knuth-bendix} even if we
do not provide a total order, as long as we are able to ensure termination
after each addition of a generating $2$-cell (we shall do this in
the proof of Corollary~\ref{cor:convergent-presentation-garside-family}).
\end{rem}

\subsection{\label{subsec:squier-completion}Squier completion}

A \textbf{family of generating confluences} of a convergent $2$-polygraph~$X$ is a set of $2$-spheres, treated as formal $3$-cells, in $X_{2}^{\top}$
containing, for every critical branching $\left\{ \alpha,\beta\right\} $
of $X$, exactly one $3$-cell $A$:\[
\begin{tikzcd}[row sep=scriptsize, column sep=large] & \ast \arrow[Rightarrow, rd, bend left=15, shorten <=-2pt, shorten >=-2pt, "\alpha'"] \tarrow[shorten <=5pt, shorten >=3pt, dashed, "A"]{dd} & \\ \ast \arrow[Rightarrow, ru, bend left=15, shorten <=-2pt, shorten >=-2pt, "\alpha"] \arrow[Rightarrow, rd, bend right=15, shorten <=-2pt, shorten >=-2pt, "\beta"'] & & \ast \\ & \ast \arrow[Rightarrow, ru, bend right=15, shorten <=-2pt, shorten >=-2pt, "\beta'"'] \end{tikzcd},
\] where $\alpha'$ and $\beta'$ are completing $\alpha$ and $\beta$,
respectively, into sequences having the same target (such $\alpha'$
and $\beta'$ exist by the assumption of confluence).

A \textbf{Squier completion} of a convergent $2$-polygraph $X$ is
a $\left(3,1\right)$-polygraph with X as underlying $2$-polygraph,\uline{}
whose generating $3$-cells form a family of generating confluences
of $X$. The following result is due to Squier; we state a version
in terms of polygraphs and higher-dimensional categories proved in
\cite{GM2}.
\begin{thm}[{\cite[Theorem~4.3.2]{GM2}}]
\label{thm:squier}If $X$ is a convergent presentation of a monoid
$M$, then every Squier completion of $X$ is a convergent coherent
presentation of $M$.
\end{thm}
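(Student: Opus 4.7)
The plan is to obtain $\Gar_3(S)$ by running the homotopical completion-reduction pipeline of diagram~\eqref{eq:transformations} on the presentation $\Gar_2(S)$. Concretely, I would proceed in four stages: (i)~termination of $\Gar_2(S)$, (ii)~Knuth-Bendix completion into a convergent presentation $\widetilde{\Gar}_2(S)$, (iii)~Squier completion into a coherent presentation $\widetilde{\Gar}_3(S)$, and (iv)~homotopical reduction collapsing $\widetilde{\Gar}_3(S)$ down to $\Gar_3(S)$.

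For stage~(i), termination of the rewriting rules $\alpha_{u,v}:u|v\Rightarrow uv$ is where the noetherianity hypothesis enters. Each rule strictly decreases the length of the word in $(S\setminus\{1\})^{\ast}$ by one, so the trivial length-based order is already a termination order; the absence of nontrivial invertibles ensures that $uv\neq 1$, so the target $uv$ is indeed a letter of $S\setminus\{1\}$, and left-cancellativity plus the defining closure properties of a Garside family guarantee that the normal form required to read $uv$ as a single letter of $S\setminus\{1\}$ is well-defined. This should invoke Remark~\ref{rem:alternative-knuth-bendix} rather than a preassigned total order.

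For stage~(ii), I would classify the critical branchings of $\Gar_2(S)$. The obvious family comes from words of the form $u|v|w$ with $u,v,w,uv,vw\in S\setminus\{1\}$, giving the two rewriting steps $\alpha_{u,v}|w$ and $u|\alpha_{v,w}$; whenever $uvw\in S$, this branching is directly confluent via the two paths $\alpha_{uv,w}$ and $\alpha_{u,vw}$, producing exactly the associativity $3$-cell $A_{u,v,w}$. The delicate case, and what the introduction advertises as the new phenomenon beyond Artin-Tits and Garside monoids, is when $uvw\notin S$: here neither $\alpha_{uv,w}$ nor $\alpha_{u,vw}$ applies, and Knuth-Bendix completion forces us to introduce auxiliary rewriting rules expressing the greedy normal form $\NF(uvw)$ of the product. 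I expect to invoke the locality properties of the greedy normal form for a Garside family, recalled in Section~\ref{sec:garside-families}, to show that these new rules close under critical pair analysis after finitely many local additions at each word, yielding a convergent presentation $\widetilde{\Gar}_2(S)$. This classification and confluence verification is where the main technical obstacle lies: one has to argue, using noetherianity to justify induction and the Garside axioms to control overlaps, that no wild new critical branchings appear outside this controlled pattern.

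Stage~(iii) is then immediate from Squier's theorem (Theorem~\ref{thm:squier}): pick any family of generating confluences for $\widetilde{\Gar}_2(S)$ to obtain a convergent coherent presentation $\widetilde{\Gar}_3(S)$ of $M$. Stage~(iv) is the reduction step, to be treated by the technique of Subsection~\ref{subsec:reduction}. I would designate as collapsible pairs every auxiliary $2$-cell introduced in stage~(ii) together with a chosen $3$-cell witnessing it as a redundant composite of $\alpha_{u,v}$'s, and symmetrically collapse every $3$-cell whose source or target involves an auxiliary $2$-cell. What survives in $2$-dimension is exactly $\Gar_2(S)$, and what survives in $3$-dimension is exactly the family of associativity cells $A_{u,v,w}$ for triples $(u,v,w)$ with $u,v,w,uv,vw,uvw\in S\setminus\{1\}$; the coherence theorem for homotopical reduction then yields that $\Gar_3(S)$ is a coherent presentation of $M$. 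The main obstacle I foresee is stage~(ii): carefully enumerating the new critical branchings arising in the general Garside-family setting, and verifying that homotopical reduction does indeed kill all the cells they introduce while leaving the $A_{u,v,w}$ untouched.
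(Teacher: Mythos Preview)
Your proposal does not address the stated theorem at all. Theorem~\ref{thm:squier} is Squier's theorem: given an arbitrary convergent $2$-polygraph $X$, any Squier completion of $X$ is a coherent presentation of $\overline{X}$. This is a general statement about rewriting systems, with no reference to Garside families, to $\Gar_2(S)$, or to the monoid $M$ of Theorem~\ref{thm:coherent-presentation-garside-family}. What you have written is instead a sketch of the proof of Theorem~\ref{thm:coherent-presentation-garside-family}, the main result of the paper. The mismatch is made explicit in your stage~(iii), where you write ``Stage~(iii) is then immediate from Squier's theorem (Theorem~\ref{thm:squier})'': you are invoking the very statement you are asked to prove as a black box inside your argument.

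In the paper, Theorem~\ref{thm:squier} carries no proof; it is quoted from~\cite[Theorem~4.3.2]{GM2} and used as an input to the homotopical completion machinery. A proof of Squier's theorem proper would proceed by noetherian induction on the source of a $2$-sphere, using Newman's lemma and the critical branchings lemma (Theorem~\ref{thm:newman-critical}) to decompose an arbitrary parallel pair of rewriting $2$-cells into pieces bounded by the generating confluences and by trivial or smaller spheres. None of that appears in your proposal. If your intent was to prove Theorem~\ref{thm:coherent-presentation-garside-family}, your outline is broadly in line with the paper's strategy in Section~\ref{sec:contribution}, though you understate stage~(i): termination of $\Gar_2(S)$ is trivial by length, but the paper needs termination of the completed polygraph $\underline{\Gar}_2(S)$, including the new $\beta$-rules which preserve length, and this is where right-noetherianity of $S$ is actually used (Proposition~\ref{prop:termination}).
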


Theorem~\ref{thm:squier} is extended to higher-dimensional polygraphs
in~\cite[Proposition~4.3.4]{GM1}.

Let $X$ be a terminating $2$-polygraph equipped with a total termination
order $\leq$. A homotopical completion of $X$ is a Squier completion
of a Knuth-Bendix completion of $X$. We have seen that a Knuth-Bendix
completion procedure enriches a terminating $2$-polygraph to a convergent
one, and that the Squier completion of a convergent $2$-polygraph
$X$ is a coherent presentation of $\overline{X}$. Those two transformations
can be performed consecutively. They can also be performed simultaneously
(see~\cite[2.2.4]{GGM}). The result is called a \textbf{homotopical
completion} of $X$. Theorem~\ref{thm:squier} has the following
consequence.
\begin{thm}
\label{thm:homotopical-completion}Assume that a $2$-polygraph $X$
is a terminating presentation of a monoid $M$. Then, every homotopical
completion of $X$ is a coherent convergent presentation of $M$.
\end{thm}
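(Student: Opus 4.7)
The plan is to obtain Theorem~\ref{thm:homotopical-completion} as a direct composition of the two preceding theorems, namely Theorem~\ref{thm:knuth-bendix} on Knuth-Bendix completion and Theorem~\ref{thm:squier} on Squier completion, exploiting the very definition of a homotopical completion recalled just before the statement.

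More precisely, let $X$ be a terminating presentation of $M$, and let $Z$ be a homotopical completion of $X$. By definition, there exists a Knuth-Bendix completion $Y$ of $X$ such that $Z$ is a Squier completion of $Y$ (whether the two procedures were carried out sequentially or interleaved as in~\cite[2.2.4]{GGM} is irrelevant for the argument: in either case the final $2$-cells of $Z$ form a Knuth-Bendix completion of $X$ and the $3$-cells a family of generating confluences thereof). First, I would invoke Theorem~\ref{thm:knuth-bendix} to conclude that $Y$ is a convergent presentation of $M$. Then, I would apply Theorem~\ref{thm:squier} to $Y$ and deduce that $Z$, being a Squier completion of a convergent presentation of $M$, is a convergent coherent presentation of $M$. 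This is exactly the conclusion sought.

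The one delicate point, which I see as the main (minor) obstacle, is that Theorem~\ref{thm:knuth-bendix} is stated under the assumption of a \emph{total} termination order, whereas Theorem~\ref{thm:homotopical-completion} only assumes that $X$ is terminating. I would deal with this exactly as signalled in Remark~\ref{rem:alternative-knuth-bendix}: since $X$ is terminating, the Knuth-Bendix procedure can be carried out by orienting each newly adjoined generating $2$-cell by hand while checking at every step that termination is preserved, so that Theorem~\ref{thm:knuth-bendix} still applies. Alternatively, one can extend the well-founded order induced by rewriting on $X_2^\ast$ (given by $u > v$ iff $u$ rewrites to $v$) to a total termination order and feed this into Theorem~\ref{thm:knuth-bendix} verbatim. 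Either way, no additional content beyond the two already invoked theorems is needed, and the proof reduces to a one-line composition.
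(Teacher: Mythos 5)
Your proposal is correct and coincides with the paper's own (implicit) proof: the paper introduces the theorem with the words ``Theorem~\ref{thm:squier} has the following consequence'', the intended argument being exactly your composition of Theorem~\ref{thm:knuth-bendix} (the $2$-cells of a homotopical completion form a convergent presentation of $M$) with Theorem~\ref{thm:squier} (a Squier completion of a convergent presentation is a coherent convergent presentation), with the termination-order issue handled via Remark~\ref{rem:alternative-knuth-bendix} just as you do. One small caveat: your alternative suggestion that the well-founded order induced by rewriting can always be extended to a \emph{total} termination order is not justified in general (total termination is a strictly stronger property than termination for string rewriting), but this remark is dispensable since your primary route through Remark~\ref{rem:alternative-knuth-bendix} already suffices.
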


\begin{example}[Klein bottle monoid]
\label{exa:klein-bottle-completion}We consider the Klein bottle
monoid $K^{+}$, as defined in~\cite[Subsection~I.3.2]{DDGKM}. It
has the following presentation:
\begin{equation}
\left\langle a,b\,\middle\vert\,bab=a\right\rangle .\label{eq:klein_presentation}
\end{equation}
The name comes from the fact that $K^{+}$ is the submonoid generated
by $a$ and $b$ of the fundamental group of the Klein bottle generated
by $a$ and $b$ subject to relation $bab=a$. Every element of~$K^{+}$
admits a unique expression of the form $a^{p}b^{q}$ for $p,q\geq0$
or $a^{p}b^{q}a$ for $p\geq0$ and $q\geq1$. That form is called
canonical. 

Let us apply a homotopical completion procedure to the presentation~\eqref{eq:klein_presentation}. We have the generating $1$-cells
$a$ and $b$, and a single generating $2$-cell $\alpha:bab\Rightarrow a$.
Let us adopt the following termination order: comparing the lengths
of words, then applying lexicographic order, induced by $a<b$, if
words have the same length. For instance, $b<aa<ab$. The only critical
branching is $\left\{ \alpha ab,ba\alpha\right\} $, with source $babab$.
The homotopical completion procedure adjoins the generating $2$-cell
$\beta:baa\Rightarrow aab$, and the ~generating $3$-cell $A$ for
coherence. The generating $2$-cell $\beta$ causes only one new critical
branching, namely $\left\{ \alpha aa,ba\beta\right\} $ with source
$babaa$, which is confluent, hence only the generating $3$-cell
$B$ is adjoined. Diagrammatically, the generating $3$-cells have
the shapes as follows:
\[
\begin{tikzcd}[row sep=small, column sep=small] babab \arrow[Rightarrow, rr, bend left=10, "\alpha ab"{name=U}] \arrow[Rightarrow, rdd, bend right=10, "ba\alpha"'] & & aab \\ \\ & baa \arrow[from=U, phantom, "A"] \arrow[Rightarrow, ruu, bend right=10, "\beta"'] \end{tikzcd}
\qquad\qquad
\begin{tikzcd}[row sep=scriptsize, column sep=scriptsize] babaa \arrow[Rightarrow, rrrr, bend left=10, "\alpha aa"{name=U}] \arrow[Rightarrow, rdd, bend right=10, "ba\beta"'] & & & & aaa \\ \\ & baaab \arrow[Rightarrow, rr, "\beta ab"'{name=D}] & & aabab \arrow[from=U, to=D, phantom, "B"] \arrow[Rightarrow, ruu, bend right=10, "aa\alpha"'] \end{tikzcd}.
\]
By Theorem~\ref{thm:homotopical-completion}, we have thus obtained
a convergent coherent presentation of the Klein bottle monoid, consisting
of two generating $1$-cells, two generating $2$-cells, and two generating
$3$-cells:
\[
\left(a,b\,\middle\vert\,bab\stackrel{\alpha}{\Rightarrow}a,baa\stackrel{\beta}{\Rightarrow}aab\,\middle\vert\,A,B\right).
\]
\end{example}

\begin{rem}
For convenience, we mostly leave implicit the orientation of the $3$-cells
in the diagrams. We only label the corresponding area with the name
of a $3$-cell. The convention is that the source and the target of
a $3$-cell are always the upper and the lower paths, respectively,
of the sphere bounding the area.
\end{rem}

\subsection{\label{subsec:reduction}Homotopical reduction}

A coherent presentation obtained by the homotopical completion procedure
is not necessarily minimal, in the sense that it may contain superfluous
cells. The homotopical reduction procedure aims to remove such superfluous
cells by performing a series of elementary collapses, analogous to
that used by Brown in~\cite{Bro}. We refer the reader to~\cite[Subsection~2.3]{GGM}
for a technical elaboration.

An \textbf{elementary Nielsen transformation} on a $\left(3,1\right)$-polygraph
$X$ is any of the following operations:
\begin{itemize}
\item replacement of a $2$-cell or a $3$-cell with its formal inverse;
\item replacement of a $3$-cell $A:\alpha\Rrightarrow\beta$ with \[
\begin{tikzcd}[row sep=scriptsize, column sep=large] & \ast \arrow[Rightarrow, r, shorten <=-2pt, shorten >=-2pt, "\alpha"{name=U}] & \ast \arrow[Rightarrow, rd, bend left=15, shorten <=-2pt, shorten >=-2pt, "\chi'"] & \\ \ast \arrow[Rightarrow, ru, bend left=15, shorten <=-2pt, shorten >=-2pt, "\chi"] \arrow[Rightarrow, rd, bend right=15, shorten <=-2pt, shorten >=-2pt, "\chi"'] & & & \ast \\ & \ast \arrow[Rightarrow, r, shorten <=-2pt, shorten >=-2pt, "\beta"'{name=D}] & \ast \arrow[Rightarrow, ru, bend right=15, shorten <=-2pt, shorten >=-2pt, "\chi'"'] \tarrow[from=U, to=D, shorten <=10pt, shorten >=10pt, "\widetilde{A}"]{} \end{tikzcd},
\]where $\chi$ and $\chi'$ are $2$-cells of $X_{3}^{\top}$.
\end{itemize}
Elementary Nielsen transformations preserve presented $1$-categories,
equivalence of presented $\left(2,1\right)$-categories and homotopy
type of $\left(3,1\right)$-polygraphs (see~\cite[2.1.4]{GGM}).
In particular, they transform a coherent presentation of a monoid
$M$ into another coherent presentation of $M$. A \textbf{Nielsen
transformation} is a composition of elementary ones. In a homotopical
completion-reduction procedure, Nielsen transformations are performed
implicitly for convenience.

Let $X$ be a $\left(3,1\right)$-polygraph. A generating $2$-cell (resp.\ $3$-cell,
resp.\ $3$-sphere) $\alpha$ of $X$ is called \textbf{collapsible}
if it meets the following two requirements:
\begin{itemize}
\item the target of $\alpha$ is a generating $1$-cell (resp.\ $2$-cell,
resp.\ $3$-cell) of $X$,
\item the source of $\alpha$ is a $1$-cell (resp.\ $2$-cell, resp.\ $3$-cell)
of the free $\left(3,1\right)$-category over $X\setminus\left\{ t\left(\alpha\right)\right\} $.
\end{itemize}
For a $\left(3,1\right)$-polygraph $X=\left(X_{0},X_{1},X_{2},X_{3}\right)$,
a \textbf{collapsible part} of $X$ is a triple $\Gamma=\left(\Gamma_{2},\Gamma_{3},\Gamma_{4}\right)$,
wherein $\Gamma_{2}$, $\Gamma_{3}$, $\Gamma_{4}$ respectively denote
families of generating $2$-cells of~$X$, generating $3$-cells of~$X$, $3$-spheres of $X_{3}^{\top}$, such that the following requirements
are met:
\begin{itemize}
\item every $\gamma$ of every $\Gamma_{k}$ is collapsible (possibly up
to a Nielsen transformation);
\item no $\gamma$ of $\Gamma_{k}$ is  the target of an element of $\Gamma_{k+1}$;
\item there exist well-founded order relations on $X_{1}$, $X_{2}$ and
$X_{3}$ such that, for every $\gamma$ in every~$\Gamma_{k}$, the
target of $\gamma$ is strictly greater than every generating $\left(k-1\right)$-cell
that occurs in the source of $\gamma$.
\end{itemize}
The result of the \textbf{homotopical reduction of $X$ with respect
to $\Gamma$} is the $\left(3,1\right)$-polygraph which we denote
$X/\Gamma$, whose generating cells are 
\[
X/\Gamma=\left(X_{0},X_{1}\setminus t\left(\Gamma_{2}\right),X_{2}\setminus t\left(\Gamma_{3}\right),X_{3}\setminus t\left(\Gamma_{4}\right)\right).
\]
Sources and targets are given by $\pi_{\Gamma}\circ s$ and $\pi_{\Gamma}\circ t$,
where $\pi_{\Gamma}$ is the $3$-functor from $X^{\top}$ to $\left(X/\Gamma\right)^{\top}$
given by the recursive formula
\[
\pi_{\Gamma}\left(x\right)=\begin{cases}
\pi_{\Gamma}\left(s\left(\gamma\right)\right) & \textrm{if }x=t\left(\gamma\right)\textrm{ for }\gamma\textrm{ in }\Gamma\\
1_{\pi_{\Gamma}\left(s\left(x\right)\right)} & \textrm{if }x\textrm{ in }\Gamma\\
x & \textrm{otherwise}.
\end{cases}
\]
Such a transformation is called the homotopical reduction procedure.

Let $X$ be a terminating $2$-polygraph, with a termination order
$\leq$. A \textbf{homotopical comple\-tion-reduction} of $X$ is a
$\left(3,1\right)$-polygraph, obtained as a homotopical reduction,
with respect to a collapsible part, of a homotopical completion of
$X$. Theorem~\ref{thm:squier} implies the following result.
\begin{thm}
\label{thm:completion-reduction}Assume that $X$ is a terminating
$2$-polygraph presenting a monoid $M$. Then, every homotopical completion-reduction
of $X$ is a coherent presentation of $M$.
\end{thm}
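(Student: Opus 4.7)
The plan is to decompose the homotopical completion-reduction into its two constituent procedures and verify that each preserves the relevant invariants, so that the conclusion follows by assembling the intermediate results already at our disposal.

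First I would fix a homotopical completion-reduction of $X$, which by definition is the homotopical reduction $Y/\Gamma$ of some homotopical completion $Y$ of $X$ with respect to some collapsible part $\Gamma=(\Gamma_{2},\Gamma_{3},\Gamma_{4})$. By Theorem~\ref{thm:homotopical-completion}, $Y$ is already a coherent convergent presentation of $M$. Hence it suffices to verify two things: that $Y/\Gamma$ still presents $M$ as a $1$-category, and that $Y/\Gamma$ is still coherent, i.e. that after quotienting by its $3$-cells every $2$-sphere becomes trivial in the presented $(2,1)$-category.

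For the first point, I would use the $3$-functor $\pi_{\Gamma}\colon Y^{\top}\to (Y/\Gamma)^{\top}$ defined in the excerpt. The collapsibility conditions on $\Gamma_{2}$ ensure that every generating $1$-cell in $t(\Gamma_{2})$ is, in the presented monoid of $Y$, already equal to the image of the source of the corresponding collapsible $2$-cell, which is a $1$-cell of $Y_{1}\setminus t(\Gamma_{2})$. The well-foundedness condition on $X_{1}$ guarantees that the recursive formula for $\pi_{\Gamma}$ terminates and is well-defined. The map $\pi_{\Gamma}$ then descends to a morphism $\overline{Y}\to\overline{Y/\Gamma}$ which, together with the obvious morphism in the opposite direction, induced by the inclusion of generators, is easily seen to be an isomorphism. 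Thus $\overline{Y/\Gamma}\cong\overline{Y}\cong M$.

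For the second point, the analogous argument in dimensions $2$ and $3$, based on the collapsibility of $\Gamma_{3}$ (with respect to a well-founded order on $X_{2}$) and $\Gamma_{4}$ (with respect to a well-founded order on $X_{3}$), and on the condition that no element of $\Gamma_{k}$ is the target of an element of $\Gamma_{k+1}$, shows that $\pi_{\Gamma}$ induces an equivalence of the presented $(2,1)$-categories and preserves the homotopy type of the $(3,1)$-polygraph. Since coherence of $Y$ is exactly the statement that $\overline{Y}$, equipped with $Y_{3}$, has trivial $2$-spheres up to the relations encoded by $Y_{3}$, this invariance of homotopy type transports coherence from $Y$ to $Y/\Gamma$. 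The main obstacle is the bookkeeping underlying the homotopy invariance of elementary collapses; this is a routine but nontrivial verification, for which I would simply cite the detailed treatment in~\cite[Subsection~2.3]{GGM}, where the collapse scheme is shown to coincide with the classical elementary collapses used by Brown~\cite{Bro}. Combining the two points yields that $Y/\Gamma$ is a coherent presentation of $M$, as required.
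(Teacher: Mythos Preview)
Your proposal is correct and follows essentially the same approach as the paper: the paper gives no detailed proof, stating only that the result is implied by Theorem~\ref{thm:squier} (via Theorem~\ref{thm:homotopical-completion}) together with the properties of homotopical reduction, for which it refers to~\cite[Subsection~2.3]{GGM}. You simply unpack this two-step decomposition (homotopical completion yields a coherent presentation; homotopical reduction preserves both the presented monoid and coherence), which is exactly what the paper leaves implicit.
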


\subsection{\label{subsec:special-case}Special case of reduction}

We have just recalled the definition of a generic collapsible part
of a $\left(3,1\right)$-polygraph $X$. For the applications considered
here, however, it is practical to also recall a particular technique,
described in~\cite[3.2]{GGM}, to construct a collapsible
part in the case when $X$ is convergent and coherent. A \textbf{local
triple branching} is an unordered triple $\left\{ \alpha,\beta,\gamma\right\} $
of rewriting steps having a common source. A local triple branching
is \textbf{trivial} if two of its components are equal or if one of
its components forms branchings of the type $\left\{ \alpha v,u\beta\right\} $,
for $u=s\left(\alpha\right)$ and $v=s\left(\beta\right)$, with the
other two. In a manner analogous to the case of local branchings,
local triple branchings can be ordered by ``inclusion'', and a minimal
nontrivial local triple branching is called \textbf{critical}. A
\textbf{generating triple confluence} of $X$ is a particular kind
of $3$-sphere $\varPhi$ constructed using a critical triple branching.
Referring the reader to~\cite[Subsection~3.2]{GGM} for elaboration
of the technique, we illustrate it by means of an example.
\begin{example}
\label{exa:klein-bottle-reduction}Let us perform a homotopical reduction
procedure on the homotopical completion of the Klein bottle monoid,
computed in Example~\ref{exa:klein-bottle-completion}. We construct
a collapsible part $\Gamma=\left(\Gamma_{2},\Gamma_{3},\Gamma_{4}\right)$.
There is only one critical triple branching, namely $\left\{ \alpha abab,ba\alpha ab,baba\alpha\right\} $.
It yields a generating triple confluence, denoted $\varPhi$, whose
boundary consists of the following two parts (we display the $3$-cells
$A$ and $B$ differently now, to make the generating triple confluence
more evident):\[
\begin{tikzcd}[cramped, row sep=scriptsize, column sep=tiny, matrix scale=0.5, transform shape, nodes={scale=1}] & & aabab \arrow[equal, rrr, bend left=10] \arrow[rdd, phantom, "Aab"] & & & aabab \arrow[Rightarrow, rrdd, bend left=10, "aa\alpha"] & \\ & \phantom{aabab} \arrow[Rightarrow, ld, phantom] \arrow[Rightarrow, ru, phantom] & & & \phantom{aabab} \arrow[Rightarrow, ld, phantom] \arrow[Rightarrow, ru, phantom] \\ bababab \arrow[Rightarrow, rruu, bend left=10, "\alpha abab"] \arrow[Rightarrow, rrr, "ba\alpha ab" description] \arrow[Rightarrow, rrdd, bend right=10, "baba\alpha"'] & & & baaab \arrow[Rightarrow, rruu, "\beta ab" description] \arrow[equal, rrdd] \arrow[rrrr, phantom, "="] & & & & aaa \\ & & & & & & aabab \arrow[Rightarrow, ru, bend right=5, "aa\alpha"'] \\ & & babaa \arrow[ruu, phantom, "baA"] \arrow[Rightarrow, rrr, bend right=10, "ba\beta"'] & & & baaab \arrow[Rightarrow, ru, bend right=5, "\beta ab"'] \end{tikzcd}
\] and \[
\begin{tikzcd}[cramped, row sep=scriptsize, column sep=tiny, matrix scale=0.5, transform shape, nodes={scale=1}] & & aabab \arrow[equal, rrr, bend left=10] \arrow[Rightarrow, rrdd, "aa\alpha" description] & & & aabab \arrow[Rightarrow, rrdd, bend left=10, "aa\alpha"] & \\ & \phantom{aabab} \arrow[Rightarrow, ld, phantom] \arrow[Rightarrow, ru, phantom] \\ bababab \arrow[Rightarrow, rruu, bend left=10, "\alpha abab"] \arrow[rrrr, phantom, "="] \arrow[Rightarrow, rrdd, bend right=10, "baba\alpha"'] & & & & aaa \arrow[ruu, phantom, "="] \arrow[equal, rrr] \arrow[rdd, phantom, "B"] & & & aaa \\ & & & \phantom{aabab} \arrow[Rightarrow, ld, phantom] \arrow[Rightarrow, ru, phantom] & & & aabab \arrow[Rightarrow, ru, bend right=5, "aa\alpha"'] \\ & & babaa \arrow[Rightarrow, rruu, "\alpha aa" description] \arrow[Rightarrow, rrr, bend right=10, "ba\beta"'] & & & baaab \arrow[Rightarrow, ru, bend right=5, "\beta ab"'] &  \end{tikzcd}.
\]Hence the component $\Gamma_{4}$ of the collapsible part contains
the $3$-sphere $\varPhi$ which has the $3$-cell~$B$ as target
(recall that we implicitly perform a higher Nielsen transformation
when needed). Hence the component $\Gamma_{4}$ of the collapsible
part contains the $3$-sphere $\varPhi$ which has the $3$-cell $B$
as target. By the definition of a collapsible part, we also need to
provide a well-founded order relation on the set of generating $3$-cells,
such that, for every $3$-sphere $\left(X,Y\right)$ in $\Gamma_{4}$,
the target $Y$ is strictly greater than every generating $3$-cell
that occurs in the  source $X$. So, we put $B>A$. Proceeding as
described in~\cite[Subsection~3.2]{GGM}, we examine the remaining
$3$-cells and construct the component $\Gamma_{3}$ out of those
$3$-cells whose boundary contains a generating $2$-cell occurring
only once in the boundary. There is only one $3$-cell left, namely
$A$, and the $2$-cell $\beta$ appears only once in the boundary
of $A$. So, $\Gamma_{3}$ contains $A$, and we order the set of
generating $2$-cells by setting $\beta>\alpha$. The component $\Gamma_{2}$
is empty because there is no $2$-cell whose source or target consists
of a single generating $1$-cell appearing only once.

Thus, after performing a homotopical reduction procedure with respect
to the collapsible part $\left(\emptyset,\Gamma_{3},\Gamma_{4}\right)$,
we are left with the presentation
\[
\left(a,b\,\middle\vert\,bab\stackrel{\alpha}{\Rightarrow}a\,\middle\vert\,\emptyset\right)
\]
which is thus coherent by Theorem~\ref{thm:completion-reduction}.
Note that having a coherent presentation $X$ with the empty set of
generating $3$-cells means that any two parallel rewriting paths
represent the same $2$-cell in $X_{3}^{\top}$.
\end{example}

\subsection{\label{subsec:ggm-coherent-presentations}Application to Artin-Tits
and Garside monoids}

In this subsection, we recollect two instances of a homotopical completion-reduction
procedure, illustrating the results of~\cite[Section~3]{GGM}. We
shall recall these examples in Subsection~\ref{subsec:garside's-presentation-garside-family},
as the theorems of~\cite[Section~3]{GGM} are special cases of our
main result.

First, let us adopt a terminology concerning divisibility in monoids.
A monoid $M$ is \textbf{left-cancellative} (resp.\ \textbf{right-cancellative})
if for all $f$, $g$ and $g'$ of $M$, the equality $fg=fg'$ (resp.
$gf=g'f$) implies the equality $g=g'$. A monoid is \textbf{cancellative}
if it is both left-cancellative and right-cancellative.

An element $f$ of a monoid $M$ is said to be a \textbf{left divisor}
of $g\in M$, and $g$ is said to be a \textbf{right multiple} of
$f$, denoted by $f\preceq g$, if there is an element $f'\in M$
such that $ff'=g$. If, additionally, $f'$ is not invertible, then
divisibility  is called \textbf{proper}. We say that $f$ is a proper
left divisor of $g$, written as $f\prec g$, if $f\preceq g$ and
$g\npreceq f$. If $M$ is left-cancellative, then the element~$f'$
is uniquely defined and called \textbf{the right complement of $f$
in $g$}.

For an element $h$ of a left-cancellative monoid $M$ and a subfamily
$S$ of $M$, we say that $h$ is a \textbf{left-gcd} (resp.\ \textbf{right-lcm})
of $S$ if $h\preceq s$ (resp.\ $s\preceq h$) holds for all $s\in S$
and if every element of $M$ which is a left divisor (resp.\ right
multiple) of all $s\in S$ is also a left divisor (resp.\ right multiple)
of $h$.

A (proper) right divisor, a left multiple, a left complement, a left-lcm
and a right-gcd are defined similarly.

We say that a left-cancellative monoid $M$\textbf{ admits conditional
right-lcms} if any two elements having a common right multiple have
a right-lcm.
\begin{example}
\label{exa:artin-tits-coherent}Let $W$ be a Coxeter group (see e.g.
\cite[Section~3]{GGM}), and $B^{+}\left(W\right)$ the corresponding
Artin-Tits monoid. Garside's presentation of the $B^{+}\left(W\right)$,
seen as a $2$-polygraph and denoted by $\Gar_{2}\left(W\right)$,
has a single generating $0$-cell, elements of $W\setminus\left\{ 1\right\} $
as generating $1$-cells, and a generating $2$-cell
\[
\alpha_{u,v}:u|v\Rightarrow uv
\]
for all $u,v\in W\setminus\left\{ 1\right\} $ such that $\ell\left(uv\right)=\ell\left(u\right)+\ell\left(v\right)$
holds, where~$\ell(u)$ denotes the common length of all reduced expressions of~$u$. Let $\Gar_{3}\left(W\right)$ denote the extended presentation
of $B^{+}\left(W\right)$ obtained by adjoining to $\Gar_{2}\left(W\right)$
a generating $3$-cell\[
\begin{tikzcd}[row sep=scriptsize, column sep=scriptsize] & uv|w \arrow[Rightarrow, rd, bend left=10, "\alpha_{uv,w}"] \arrow[dd, phantom, "A_{u,v,w}"] & \\ u|v|w \arrow[Rightarrow, ru, bend left=10, "\alpha_{u,v}| w"] \arrow[Rightarrow, rd, bend right=10, "u|\alpha_{v,w}"'] & & uvw \\ & u|vw \arrow[Rightarrow, ru, bend right=10, "\alpha_{u,vw}"'] \end{tikzcd}
\]for all $u$, $v$ and $w$ of $W\setminus\left\{ 1\right\} $ such
that $\ell\left(uv\right)=\ell\left(u\right)+\ell\left(v\right)$
and $\ell\left(vw\right)=\ell\left(v\right)+\ell\left(w\right)$ and
$\ell\left(uvw\right)=\ell\left(u\right)+\ell\left(v\right)+\ell\left(w\right)$
hold. By~\cite[Theorem~3.1.3]{GGM}, $\Gar_{3}\left(W\right)$ is
a homotopical completion-reduction of $\Gar_{2}\left(W\right)$ so,
by Theorem~\ref{thm:completion-reduction}, it is a coherent presentation
of $B^{+}\left(W\right)$.
\end{example}

\begin{example}
\label{exa:garside-monoid-coherent}Recall that a \textbf{Garside
monoid} (see~\cite[Definition~I.2.1]{DDGKM}) is a pair $\left(M,\Delta\right)$
such that the following conditions hold:
\begin{enumerate}
\item $M$ is a cancellative monoid;
\item \label{enu:garside-monoid-2}there is a map $\lambda:M\to\mathbb{N}$
such that $\lambda\left(fg\right)\geq\lambda\left(f\right)+\lambda\left(g\right)$
and $\lambda\left(f\right)=0\implies f=1$;
\item every two elements have a left-gcd and a right-gcd and a left-lcm
and a right-lcm;
\item there is element $\Delta$, called the Garside element, such that
the left and the right divisors of $\Delta$ coincide, and they generate
$M$;
\item the family of all divisors of $\Delta$ is finite.
\end{enumerate}
We write $f\wedge g$ for the left-gcd of $f$ and $g$. For a (left)
divisor $f$ of $\Delta$, we write $\partial\left(f\right)$ for
the right complement of $f$ in $\Delta$.

Garside's presentation of a Garside monoid $M$ is the $2$-polygraph
$\Gar_{2}\left(M\right)$, having divisors of $\Delta$, other than
$1$, as generating $1$-cells and a generating $2$-cell $\alpha_{u,v}:u|v\Rightarrow uv$
whenever the condition $\partial\left(u\right)\wedge v=v$ is satisfied.
To be able to define generating $3$-cells, we need to generalise
this condition, in a suitable way, to three elements. Let us first
observe that the condition $\partial\left(u\right)\wedge v=v$ is
equivalent to saying that $v$ is a left divisor of $\partial\left(u\right)$.
In other words, there is~$w$ in~$M$ such that $vw=\partial\left(u\right)$.
By definition of $\partial\left(u\right)$, this means that $uvw=\Delta$,
so $uv$ is a divisor of $\Delta$. This reformulation allows an extension
of the given condition to a greater number of elements. Let $\Gar_{3}\left(M\right)$
denote the extended presentation of $M$ obtained by adjoining to
$\Gar_{2}\left(M\right)$ a generating $3$-cell\[
\begin{tikzcd}[row sep=scriptsize, column sep=scriptsize] & uv|w \arrow[Rightarrow, rd, bend left=10, "\alpha_{uv,w}"] \arrow[dd, phantom, "A_{u,v,w}"] & \\ u|v|w \arrow[Rightarrow, ru, bend left=10, "\alpha_{u,v}| w"] \arrow[Rightarrow, rd, bend right=10, "u|\alpha_{v,w}"'] & & uvw \\ & u|vw \arrow[Rightarrow, ru, bend right=10, "\alpha_{u,vw}"'] \end{tikzcd}
\]for all $u$, $v$ and $w$ divisors of $\Delta$, not equal to $1$,
such that $uv$, $vw$ and $uvw$ are divisors of $\Delta$. By Theorem~\cite[Theorem~3.3.3]{GGM}, $\Gar_{3}\left(M\right)$ is a homotopical
completion-reduction of $\Gar_{2}\left(M\right)$ so, by Theorem~\ref{thm:completion-reduction},
it is a coherent presentation of $M$.
\end{example}

\section{\label{sec:garside-families}Garside families}

This section briefly recollects the basic notions and results concerning
Garside families (for technical elaboration, see the book~\cite{DDGKM}).

\subsection{Right-mcms}

Let $M$ be a left-cancellative monoid, and $S$ a subfamily of $M$.
The left divisibility relation $\preceq$ is a preorder of elements;
it is an order if, and only if, $M$ has no nontrivial invertible
element.

A subfamily $S$ of a left-cancellative monoid $M$ is \textbf{closed
under right comultiple} if every common right multiple of two elements
$f$ and $g$ of $S$ (if there is any) is a right multiple of a common
right multiple of $f$ and $g$ that lies in $S$.

For $f$ and $g$ in a monoid $M$, a minimal common right multiple,
or \textbf{right-mcm}, of $f$ and $g$ if is a right multiple $h$
of $f$ and $g$, such that no proper left divisor of $h$ is a common
right multiple of $f$ and $g$. A monoid $M$ \textbf{admits right-mcms}
if, for all $f$ and $g$ of $M$, every common right multiple of
$f$ and $g$ is a right multiple of some right-mcm of $f$ and $g$.
Observe that in a monoid admitting conditional right-lcms, the notions
of a right-mcm and right-lcm coincide. Let us state a rather basic
observation about right-mcm in a left-cancellative monoid, which we
use in one step of the main proof in Subsection~\ref{subsec:main}.
The following lemma is similar to~\cite[Lemma~11.24]{HO}, which deals
with lcms whereas here it suffices to consider mcms (under weaker
assumptions).
\begin{lem}
\label{lem:left-multiple-of-right-mcm}Assume that $M$ is a left-cancellative
monoid. If $v'$ is a right-mcm of $v_{1}$ and $v_{2}$ in $M$,
then $uv'$ is a right-mcm of $uv_{1}$ and $uv_{2}$ for every $u$
in $M$.
\end{lem}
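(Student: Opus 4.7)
My plan is to verify the two defining conditions of a right-mcm for $uv'$ with respect to $uv_1$ and $uv_2$: that $uv'$ is a common right multiple, and that no proper left divisor of $uv'$ is a common right multiple.

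The first condition is immediate: since $v'$ is a common right multiple of $v_1$ and $v_2$, there exist $a_1,a_2 \in M$ with $v' = v_1 a_1 = v_2 a_2$, whence $uv' = uv_1 a_1 = uv_2 a_2$, so $uv'$ is a common right multiple of $uv_1$ and $uv_2$.

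For the second (minimality) condition, I would argue by contradiction. Suppose $h \prec uv'$ is a common right multiple of $uv_1$ and $uv_2$. Since $u \preceq uv_1 \preceq h$, we can write $h = uc$ for some $c \in M$. Writing $uv' = h d$ for a witness $d$ of $h \preceq uv'$ gives $uv' = ucd$, and left-cancellativity of $M$ yields $v' = cd$, so $c \preceq v'$. Analogously, from $uv_i \preceq h = uc$, left-cancellation produces $v_i \preceq c$ for $i \in \{1,2\}$, so $c$ is a common right multiple of $v_1$ and $v_2$.

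Now I use the right-mcm property of $v'$: the relation $c \preceq v'$ combined with the fact that no proper left divisor of $v'$ is a common right multiple of $v_1$ and $v_2$ forces $v' \preceq c$, that is, $c = v'b$ for some $b$. Then $h = uc = uv' b$, giving $uv' \preceq h$; together with $h \preceq uv'$ this contradicts $h \prec uv'$. So no such $h$ exists, which completes the verification that $uv'$ is a right-mcm of $uv_1$ and $uv_2$. The main subtlety is the careful use of left-cancellativity to transfer a (proper) divisibility relation on a product of the form $u(\cdot)$ to the corresponding relation on the second factor, which is exactly what allows the right-mcm property of $v'$ to be invoked; no other hypothesis on $M$ is needed.
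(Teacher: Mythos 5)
Your proof is correct; note that the paper itself states Lemma~\ref{lem:left-multiple-of-right-mcm} without proof, remarking only that it is similar to~\cite[Lemma~11.24]{HO}, and your argument is precisely the expected one left to the reader: both defining conditions of a right-mcm are transferred between $v'$ and $uv'$ by left-cancelling $u$. You also handle the one genuine subtlety correctly, namely using the paper's definition of proper left divisibility ($f\prec g$ iff $f\preceq g$ and $g\npreceq f$) so that $c\preceq v'$ together with minimality of $v'$ forces $v'\preceq c$, which then contradicts $uv'\npreceq h$; only left-cancellativity is used, as you observe.
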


Following~\cite[Propositions~II.2.28 and II.2.29]{DDGKM}, a left-cancellative
monoid $M$ is said to be \textbf{left-noetherian} (resp.\ \textbf{right-noetherian})
if for every $g$ in $M$, every increasing sequence of right (resp.
left) divisors of $g$ with respect to proper right divisibility (resp.
left divisibility) is finite. A left-cancellative monoid $M$ is \textbf{noetherian}
if it is both left-noetherian and right-noetherian. 
\begin{example}
\uline{\label{exa:noetherian-artin-tits-garside}}Proper division,
left or right, strictly reduces the length of an element of an Artin-Tits
monoid. Therefore, no element admits an infinite number of divisors,
so Artin-Tits monoids are noetherian.

Garside monoids are noetherian by definition (thanks to the map $\lambda:M\to\mathbb{N}$).
\end{example}

\subsection{\label{subsec:garside-family}Notion of a Garside family}

In this subsection, we recollect the definition and some basic properties
of the all-important notion of a Garside family which provides a way
of extending the notion of a greedy decomposition beyond Garside monoids.

Given a subfamily $S$ of a left-cancellative monoid $M$, an $M$-word
$g_{1}|\cdots|g_{q}$ is said to be \textbf{$S$-greedy} if for all
$i<q$, 
\[
\forall h\in S,\forall f\in M,\left(h\preceq fg_{i}g_{i+1}\implies h\preceq fg_{i}\right).
\]
In other words, if the diagram \[\begin{tikzcd}[row sep=large, column sep=large] \bullet \arrow[r, shorten <=-2pt,  shorten >=-2pt, "h"] \arrow[r, phantom, ""{name=g1'}', near end] \arrow[d, "f"'] & \bullet \arrow[rd, bend left, shorten <=-2pt,  shorten >=-2pt] \arrow[d, shorten <=-2pt, shorten >=-2pt, dashed] \arrow[d, phantom, ""{name=f1}', near start] & \\ \bullet \arrow[r, shorten <=-2pt,  shorten >=-2pt, "g_{i}"] \arrow[r, phantom, ""{name=g1}', near end] & \bullet \arrow[r, shorten <=-2pt,  shorten >=-2pt, "g_{i+1}"] \arrow[r, phantom, ""{name=g2}', near start] & \bullet \arrow[from=g1, to=g2, shorten <=-2pt,  shorten >=-2pt, bend right=50, dash] \end{tikzcd}\]commutes
without the dashed arrow, then there exists a dashed arrow making
the square on the left commute. The arc joining $g_{i}$ and $g_{i+1}$
denotes greediness. By definition, a word of length zero or one is
$S$-greedy for any subfamily $S$. 

Given a subfamily $S$ of $M$, an $M$-word $g_{1}|\cdots|g_{q}$
is said to be \textbf{$S$-normal} if it is \textbf{$S$}-greedy and
if, moreover, $g_{1},\ldots,g_{q}$ all lie in $S$. An $S$-normal
word $g_{1}|\cdots|g_{q}$ is \textbf{strict} if $g_{q}\neq1$. Observe
that the existence of an $S$-normal form implies the existence of
a strict one.

Note that, by the very definition of being greedy, a word is normal
if, and only if, its length-two factors are. More is true: the procedure
of transforming a word into its normal form consists of transforming
its length-two factors (we refer the reader to~\cite{DG} for elaboration).

In general, an \textbf{$S$-normal} decomposition of an element $g$
of $M$ is not unique. Nevertheless, the number of non-invertible
letters in all $S$-normal decompositions of $g$ is the same (see
\cite[Proposition~2.11]{DDM} or~\cite[Proposition~III.1.25]{DDGKM}
for exposition). If $M$ has no nontrivial invertible element, then
every $g$ in $M$ admits at most one strict $S$-normal decomposition.
Given a subfamily~$S$ of a left-cancellative monoid $M$, and an
element $g$ of $M$ admitting at least one $S$-normal decomposition,
one defines the \textbf{$S$-length} of an element $g\in M$ to be the common number of non-invertible
letters in all $S$-normal decompositions of $g$.

A subfamily $S$ of a left-cancellative monoid $M$ is called a \textbf{Garside
family} in $M$ if every element of $M$ admits an $S$-normal decomposition.
Since every left-cancellative monoid $M$ is a Garside family in itself
(for every $g$ in $M$, simply take a length-one word $g$ as a $M$-normal
decomposition of $g$), we are interested only in proper (meaning
other than $M$ itself) Garside families. Observe that, if $M$ has
no nontrivial invertible element and $S$ is a Garside family in~$M$,
then every element of $M$ admits a unique strict $S$-normal decomposition.
\begin{example}
Every Artin-Tits monoid admits a finite Garside family. In the case
of an Artin-Tits monoid of spherical type, a finite Garside family
is given by the corresponding Coxeter group. In the particular case
of a braid monoid, the family of all simple braids is a Garside family.

The Coxeter group $W$ which corresponds to a general Artin-Tits monoid
$B^{+}\left(W\right)$ is a possibly infinite Garside family, but
$B^{+}\left(W\right)$ admits a finite Garside family in any case
(see~\cite{DDH}).

Any Garside monoid $\left(M,\Delta\right)$ has a finite Garside family
given by the family of all divisors o~ $\Delta$ (see~\cite[Proposition~2.18]{DDM}
or~\cite[Proposition~III.1.43]{DDGKM}).
\end{example}

The following proposition gives a simple characterisation of a Garside
family. 
\begin{prop}[{\cite[Proposition~3.1]{DDM} or~\cite[Proposition~III.1.39]{DDGKM}}]
\label{prop:garside-family-iff}A subfamily $S$ of a monoid~$M$
containing no nontrivial invertible element is a Garside family if,
and only if, the following conjunction holds: $S$ generates $M$
and every element of $S^{2}$ admits an $S$-normal decomposition. 
\end{prop}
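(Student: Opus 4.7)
The equivalence splits into a routine direction and a substantive one.

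The forward direction is immediate from the definitions. If $S$ is a Garside family in $M$, then every element of $M$ admits an $S$-normal decomposition, so in particular every element of $S^{2}\subseteq M$ does. Moreover, such a decomposition exhibits any $g\in M$ as a product of letters from $S$, hence $S$ generates $M$.

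For the reverse direction, assume that $S$ generates $M$ and every element of $S^{2}$ admits an $S$-normal decomposition; I must show every $g\in M$ admits an $S$-normal decomposition. Since $S$ generates $M$, I pick an $S$-word $s_{1}|\cdots|s_{q}$ representing $g$; the task is to convert it into an $S$-normal word with the same evaluation in $M$. I would argue by induction on $q$. The cases $q\le 1$ are immediate, as words of length at most one are trivially $S$-normal, and $q=2$ is precisely the hypothesis. For $q\ge 3$, I apply the induction hypothesis to the prefix $s_{1}\cdots s_{q-1}$ to obtain an $S$-normal decomposition $u_{1}|\cdots|u_{p}$, which reduces the problem to an \emph{appending step}: given an $S$-normal word $u_{1}|\cdots|u_{p}$ and a letter $s\in S$, produce an $S$-normal decomposition of $u_{1}\cdots u_{p}\cdot s$.

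To carry out the appending step I would perform a right-to-left sweep. Replace the trailing pair $u_{p}|s$ by its $S$-normal decomposition $u_{p}'|v'$ supplied by the $S^{2}$-hypothesis; if $p\ge 2$, examine $u_{p-1}|u_{p}'$, and if it is not $S$-normal, replace it in turn by its $S$-normal decomposition using the hypothesis again; continue leftward. The decisive tool here is the local characterization recalled just before the statement: a word is $S$-normal if and only if all its length-two factors are, so the correctness of the appending procedure only needs to be checked on adjacent length-two factors.

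The main obstacle is to verify that this leftward propagation terminates and genuinely yields a fully $S$-normal word. Concretely, one must check two things at each step: that the newly normalized pair $u_{i-1}'|u_{i}''$ does not spoil the $S$-normality of the pair $u_{i}''|u_{i+1}'$ just to its right, and that some well-founded monovariant strictly decreases. For the first, one uses that in an $S$-normal decomposition the first letter is maximally absorbing in the sense dictated by the greediness condition, so replacing $u_{i}'$ by $u_{i}''$ amounts to moving a common left factor from $u_{i}'$ back into $u_{i-1}$, an operation which preserves the greediness of any pair to the right by left-cancellativity in $M$. For the second, one observes that each ripple step strictly moves the position of the leftmost non-$S$-normal factor one step to the left, bounding the total number of ripples by $p$. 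Combining the appending step with the induction on $q$ produces the desired $S$-normal decomposition of $g$, completing the proof.
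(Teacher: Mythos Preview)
The paper does not itself prove this proposition; it is quoted from \cite[Proposition~3.1]{DDM} and \cite[Proposition~III.1.39]{DDGKM}. Your forward direction is correct.

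For the reverse direction, the sweep architecture is natural but the crucial step is not established. After normalising $u_{i-1}|u_i'$ to $u_{i-1}'|u_i''$, you assert that $u_i''|u_{i+1}'$ remains $S$-greedy, justified by ``moving a left factor $r$ from $u_i'$ into $u_{i-1}$'' and ``left-cancellativity''. Left-cancellativity does yield $u_i' = r\,u_i''$, but greediness of $u_i'|u_{i+1}'$ is the implication $h\preceq f\,u_i'\,u_{i+1}'\Rightarrow h\preceq f\,u_i'$ quantified over \emph{all} $f\in M$, and rewriting $u_i'=r\,u_i''$ only recovers the instances with $f$ of the form $f'r$; the implication for general $f$ does not follow. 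What you need here is the second domino rule, and proving it from the bare $S^{2}$ hypothesis---without already knowing closure of $S$ under right divisor, or the existence of $S$-heads---is essentially the whole difficulty of the proposition, not a triviality that drops out of left-cancellativity. A related gap: when you write ``$u_p'|v'$'' you tacitly assume the $S$-normal form of an element of $S^{2}$ has length at most two; this is true but relies on closure of $S$ under right divisor, which is a consequence of the proposition rather than a hypothesis.

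The proofs in \cite{DDM,DDGKM} avoid the sweep: they first show that the $S^{2}$ hypothesis forces every non-invertible element of $M$ to admit a $\preceq$-maximal left divisor in $S$ (its $S$-head), and then build the normal form by iteratively extracting $S$-heads from the left. Once that is in hand, the domino rules follow as corollaries.
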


Let us recall another characterisation of Garside family, one direction
whereof we invoke in Subsection~\ref{subsec:main}. More characterisations
of Garside families can be found in~\cite[Subsection 3.2]{DDM} or
in~\cite[Subsection~IV.1.2]{DDGKM}. 
\begin{prop}[{\cite[Proposition~3.9]{DDM}}]
\label{prop:garside-implies-closed-comultiple}A family $S$ of a
left-cancellative monoid $M$ containing no nontrivial invertible
element is a Garside family if, and only if, the following conditions
are satisfied: $S$ generates $M$, it is closed under right comultiple
and right divisor, and every non-invertible element of $S^{2}$ admits
a $\prec$-maximal left divisor in $S$. 
\end{prop}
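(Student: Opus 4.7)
The plan is to pivot on Proposition \ref{prop:garside-family-iff}, which already characterises $S$ being a Garside family as: $S$ generates $M$ and every element of $S^{2}$ admits an $S$-normal decomposition. The task therefore splits into (i) deducing the three remaining conditions (closure under right comultiple, closure under right divisor, and existence of $\prec$-maximal left divisors in $S$ for non-invertible elements of $S^{2}$) from the existence of $S$-normal decompositions, and (ii) using those three conditions to produce such decompositions.

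For (i), I would take a non-invertible $x \in S^{2}$ together with a strict $S$-normal decomposition $x = x_{1}|\cdots|x_{n}$. Iterating the greediness condition through the successive factors $x_{i}|x_{i+1}$ (with $f = x_{1}\cdots x_{i-1}$) shows that any $h \in S$ with $h \preceq x$ must already left-divide $x_{1}$; so $x_{1}$ is a $\prec$-maximal left divisor of $x$ in $S$ (absence of nontrivial invertibles makes $\preceq$ antisymmetric, so maximality is not corrupted by invertible ambiguity). The same argument, applied to a strict $S$-normal decomposition of a common right multiple $r$ of two elements $s,t \in S$, yields that its first letter lies in $S$, left-divides $r$, and dominates both $s$ and $t$; this gives closure under right comultiple. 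Closure under right divisor is the most delicate of the three: given $s = uv$ with $s \in S$, the idea is to compare the length-one $S$-normal decomposition of $s$ with decompositions obtained by concatenating normal decompositions of $u$ and $v$, and to use the invariance of $S$-length together with uniqueness of strict $S$-normal decompositions to force $v \in S$.

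For (ii), I would start with a non-invertible $st \in S^{2}$ and take $s_{1} \in S$ to be a $\prec$-maximal left divisor of $st$ in $S$, writing $st = s_{1} s_{2}$. Maximality can first be upgraded to the statement that every $s' \in S$ with $s' \preceq st$ satisfies $s' \preceq s_{1}$: indeed $st$ is a common right multiple of $s'$ and $s_{1}$, so closure under right comultiple produces a common right multiple $r \in S$ of $s'$ and $s_{1}$ with $r \preceq st$; then $s_{1} \preceq r \preceq st$ combined with the $\prec$-maximality of $s_{1}$ forces $r = s_{1}$, whence $s' \preceq s_{1}$. Applied to $s$, this yields $s \preceq s_{1}$, and after left-cancellation $s_{2}$ appears as a right divisor of $t \in S$, so $s_{2} \in S$ by closure under right divisor.

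The remaining, and main, obstacle is to verify that $s_{1}|s_{2}$ is $S$-greedy, i.e. that $h \preceq f \cdot s_{1} s_{2}$ implies $h \preceq f s_{1}$ for every $f \in M$ and $h \in S$. The case $f = 1$ is exactly the strengthening of maximality just established. For a general $f$, the plan is to reduce to the $f = 1$ case by transporting right-mcm data along $f$ by means of Lemma \ref{lem:left-multiple-of-right-mcm}: a common right multiple of $fs_{1}$ and $h$ that left-divides $fst$ is to be rewritten as $f$ times a common right multiple of $s_{1}$ and an element extracted from $h$ modulo $f$, after which closure under right comultiple lifts this inner multiple to one lying in $S$, and the strengthened maximality of $s_{1}$ concludes. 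This transport step is where all three assumed conditions on $S$ interact most tightly, and where the bulk of the technical work resides.
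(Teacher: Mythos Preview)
The paper does not give its own proof of this proposition: it is quoted verbatim from~\cite[Proposition~3.9]{DDM} and used as a black box, so there is no argument in the paper to compare your proposal against.

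As for the proposal itself, your outline for direction~(i) is reasonable, though the sketch for closure under right divisor is too vague to assess. The real difficulty lies in direction~(ii), precisely at the step you flag as ``the main obstacle''. Your plan to handle the case of a general $f$ in the greediness condition relies on Lemma~\ref{lem:left-multiple-of-right-mcm} and on ``transporting right-mcm data along $f$''; but the statement you are proving does \emph{not} assume that $M$ admits right-mcms, so you have no way to produce the right-mcm you want to transport. The hypotheses here are only left-cancellativity and absence of nontrivial invertibles, and closure of $S$ under right comultiple does not by itself supply right-mcms in $M$. The argument in~\cite{DDM} avoids this by a different route (essentially via a ``head'' function and an induction on a decomposition of $f$ in terms of $S$), and this is where your proposal would need to be reworked rather than merely fleshed out.
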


We recall another result to be used in Subsection~\ref{subsec:main}. 
\begin{lem}[{\cite[Lemma~IV.2.24]{DDGKM}}]
\label{lem:closed-comultiple-iff-closed-mcm}Assume that $M$ is
a left-cancellative monoid that contains no nontrivial invertible
element and admits right-mcms. Then for every subfamily $S$ of $M$,
the following are equivalent. 
\begin{itemize}
\item The family $S$ is closed under right comultiple. 
\item The family $S$ is \textbf{closed under right-mcm}, i.e.\ if $f$ and
$g$ lie in $S$, then so does every right-mcm of $f$ and $g$. 
\end{itemize}
\end{lem}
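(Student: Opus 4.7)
The plan is to prove the equivalence directly from the definitions, with each direction using one of the two hypotheses on $M$ in an essential way. The fact that $M$ admits right-mcms will serve to extract a right-mcm below any prescribed common right multiple, and the fact that $M$ contains no nontrivial invertible element will serve to upgrade the left-divisibility preorder $\preceq$ to a partial order (as noted at the start of Subsection~4.1).

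For the implication closed-under-right-mcm $\Rightarrow$ closed-under-right-comultiple, I would fix $f,g \in S$ together with an arbitrary common right multiple $h$. The hypothesis that $M$ admits right-mcms yields a right-mcm $h'$ of $f$ and $g$ with $h' \preceq h$. Closure under right-mcm then immediately gives $h' \in S$, and since $h'$ is a common right multiple of $f$ and $g$ lying in $S$ and left-dividing $h$, this is exactly the condition required by closure under right comultiple.

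For the converse, I would proceed by contradiction. Given $f,g \in S$ and an arbitrary right-mcm $h$ of $f$ and $g$, the element $h$ is in particular a common right multiple, so closure under right comultiple produces some $h' \in S$, itself a common right multiple of $f$ and $g$, with $h' \preceq h$. If $h' \neq h$, then antisymmetry of $\preceq$ forces $h \not\preceq h'$, hence $h' \prec h$; thus $h'$ would be a proper left divisor of $h$ which is a common right multiple of $f$ and $g$, contradicting the minimality clause in the definition of right-mcm. Therefore $h' = h$, so $h \in S$, as required.

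I do not anticipate any real obstacle; the proof is essentially an unwinding of definitions. The only delicate point is the antisymmetry step in the converse direction, which is precisely where the no-nontrivial-invertible-element hypothesis is used, while the admits-right-mcms hypothesis is what makes the first direction work.
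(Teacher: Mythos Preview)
Your proof is correct and complete; both directions are handled cleanly, and you correctly identify where each hypothesis is used. Note that the paper does not actually supply its own proof of this lemma: it is stated with a citation to \cite[Lemma~IV.2.24]{DDGKM} and invoked as a black box, so there is nothing to compare against beyond observing that your argument is the natural one.
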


Given a Garside family $S$ in a left-cancellative monoid $M$ with
no nontrivial invertible element, the \textbf{normalisation map} $N^{S}:S^{\ast}\to S^{\ast}$
is the map which assigns to each $w\in S^{\ast}\setminus\left\{ 1\right\} $
the strict $S$-normal decomposition of the element of $M$ represented
by $w$; and $N^{S}\left(1\right)=1$. The following result provides
an important property of $S$-normal decomposition. 
\begin{lem}[{\cite[Lemma~6.9]{DG}}]
\label{lem:garside-implies-left-weighted}Assume that $M$ is a left-cancellative
monoid having no nontrivial invertible element, and $S$ is a Garside
family in $M$. For every word $w\in S^{\ast}$, the leftmost letter
of $w$ left-divides the leftmost letter of $N^{S}\left(w\right)$.
\end{lem}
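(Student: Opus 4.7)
My plan is to read off the conclusion directly from the $S$-greediness condition defining $N^S(w)$, applied iteratively from right to left. Let $w = s_1\,|\,s_2\,|\,\cdots\,|\,s_p$ and write $N^{S}(w) = s'_1\,|\,\cdots\,|\,s'_q$ for its strict $S$-normal form; the goal is to show $s_1 \preceq s'_1$.

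If $s_1 = 1$, the statement is trivial. Otherwise, since $s_1 \preceq \ev(w) = s'_1 \cdots s'_q$ and since $M$ contains no nontrivial invertible element, the normal form must be nonempty, i.e.\ $q \geq 1$. Starting from the trivial inequality $s_1 \preceq s'_1 \cdots s'_q$, I apply the $S$-greediness of $s'_1\,|\,\cdots\,|\,s'_q$ at the index $i = q-1$ with $f := s'_1 \cdots s'_{q-2}$ and $h := s_1$ to obtain $s_1 \preceq s'_1 \cdots s'_{q-1}$. Iterating this reasoning downwards through $i = q-2, q-3, \ldots, 1$ (shrinking $f$ by one factor at each step) yields after $q - 1$ steps $s_1 \preceq s'_1$, as desired.

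This is a short, purely elementary backwards induction that relies only on the very definition of $S$-greediness and on the assumed properties of $M$; I do not anticipate any genuine obstacle. No further specific property of Garside families (such as closure under right comultiple, or the existence of a $\prec$-maximal left divisor in $S$) is needed beyond the existence of the strict $S$-normal decomposition itself, which is built into the definition of a Garside family. The only care required concerns the degenerate cases $s_1 = 1$ and $q = 0$, both handled at the opening of the argument.
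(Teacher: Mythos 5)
Your proof is correct. The paper reaches the same conclusion by a slightly different route: it observes that the leftmost letter $h$ of $w$ left-divides $s'_1\left(s'_2\cdots s'_q\right)$, then invokes \cite[Lemma~2.12]{DDM} --- if $g_1|\cdots|g_q$ is $S$-greedy, then the length-two word $g_1|g_2\cdots g_q$ is $S$-greedy as well --- and concludes with a single application of greediness (the case $f=1$) to get $h\preceq s'_1$. Your backwards induction through $i=q-1,\ldots,1$ is essentially that cited lemma unpacked: each of your steps is exactly one instance of the pairwise greediness condition, so both arguments rest on the same mechanism, namely that greediness propagates left-divisibility by elements of $S$ from the full product down to the first letter. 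What your version buys is self-containedness (no external lemma) and explicit treatment of the degenerate cases $q=0$ and $s_1=1$, which the paper leaves implicit; what the paper's version buys is brevity and reuse of a standard fact from the Garside-family literature. One small point to make explicit in your write-up: the greediness condition quantifies only over $h\in S$, so when you instantiate $h:=s_1$ you should note that $s_1$ lies in $S$ because $w\in S^{\ast}$ --- you use this silently, and it is the only hypothesis-check the instantiation needs.
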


\begin{proof}
Let $N^{S}\left(w\right)=s_{1}|\cdots|s_{q}$. Since $w$ and $s_{1}|s_{2}\cdots s_{q}$
evaluate to the same element of $M$, the leftmost letter of $w$
left-divides $s_{1}\left(s_{2}\cdots s_{q}\right)$. By \emph{\cite[Lemma~2.12]{DDM}},
for a subfamily $S$ of a left-cancellative monoid $M$, if an $M$-word
$g_{1}|\cdots|g_{q}$ is $S$-greedy, then $g_{1}|g_{2}\cdots g_{q}$
is $S$-greedy, as well.  Hence, the length-two $M$-word $s_{1}|s_{2}\cdots s_{q}$
is $S$-greedy.
\end{proof}
A normalisation map satisfying the conclusion of Lemma~\ref{lem:garside-implies-left-weighted},
but limited to $S$-words of length two, is called \textbf{left-weighted}
in~\cite[Subsection~6.2]{DG}, and Lemma~\ref{lem:garside-implies-left-weighted}
is a (quite straightforward) generalisation of~\cite[Lemma~6.9]{DG}
to all $S$-words.

A Garside family yields a presentation in the following sense. 
\begin{prop}[{\cite[Proposition~6.17]{DG} or~\cite[Corollary 6.6.4]{Gui}}]
\label{cor:dg-6.3.3}Assume that $M$ is a left-cancellative monoid
containing no nontrivial invertible element, and $S\subseteq M$ is
a Garside family. Then $M$ admits, as a convergent presentation,
the $2$-polygraph $\left(\left\{ \bullet\right\} ,S\setminus\left\{ 1\right\} ,X_{S}\right)$,
where $\left\{ \bullet\right\} $ denotes a singleton and $X_{S}$
is the set of generating $2$-cells of the form 
\begin{equation}
s|t\Rightarrow N^{S}\left(s|t\right)\label{eq:garside_quadratic_relations}
\end{equation}
for all $s$ and $t$ in $S\setminus\left\{ 1\right\} $ such that
$s|t$ is not $S$-normal. In particular, every Artin-Tits monoid
admits a finite convergent presentation. 
\end{prop}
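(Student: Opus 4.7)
I would split the proof into verifying that the $2$-polygraph $Y = (\{\bullet\}, S\setminus\{1\}, X_S)$ is (a) a presentation of $M$, (b) terminating, and (c) confluent, and then deducing the Artin-Tits consequence. For (a), the set $S\setminus\{1\}$ generates $M$ by Proposition~\ref{prop:garside-family-iff} (dropping the identity never affects generation), and each rule $s|t \Rightarrow N^S(s|t)$ is sound in $M$ since both sides are decompositions of the same element $st$. That these relations are complete — i.e., they generate the kernel congruence of the evaluation map — will follow once convergence is in hand: any equality $[w_1] = [w_2]$ in $M$ forces both $w_i$ to rewrite to the unique strict $S$-normal decomposition of their common value, which is unique because $M$ has no nontrivial invertible element.

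Termination (b) rests on the following key point: whenever $s|t$ is not $S$-normal, $N^S(s|t) = s'|t'$ satisfies $s \prec s'$, because $s'$ is the $\prec$-maximal left divisor of $st$ in $S$ provided by Proposition~\ref{prop:garside-implies-closed-comultiple}. A rewriting step at a position $(i, i+1)$ in $w = s_1|\cdots|s_q$ therefore either strictly decreases $|w|$ (when $N^S(s_i|s_{i+1})$ collapses to length one) or preserves $|w|$ while strictly enlarging $s_i$ in the $\prec$-order. Combined with the invariance of the $S$-length $\|[w]\|_S$ under rewriting and with the left-weighted property of $N^S$ (Lemma~\ref{lem:garside-implies-left-weighted}), this yields a well-founded termination measure obtained by comparing the letter profile of $w$ against the unique target normal form of $[w]$; the explicit bookkeeping is the one carried out in~\cite[Proposition~6.17]{DG} and~\cite[Corollary~6.6.4]{Gui}. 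The main obstacle to formalise is that a step at $(i, i+1)$ can create new non-$S$-normal factors at neighbouring positions $(i-1, i)$ or $(i+1, i+2)$, so a naive count of non-greedy factors is \emph{not} strictly decreasing; one must instead track letter-by-letter divisibility against the normal form.

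For (c), once termination holds, Newman's lemma together with the critical-branchings theorem (Theorem~\ref{thm:newman-critical}) reduces confluence to confluence of critical branchings. Since every rule in $X_S$ has a length-two source, critical branchings arise only at length-three overlaps of the form $r|s|t$ with both $r|s$ and $s|t$ non-$S$-normal; each of the two initial steps yields a word evaluating to $rst \in M$. By termination, each branch continues to an irreducible word, and irreducibility says every length-two factor is $S$-normal, hence the whole word is $S$-normal (normality being a local property of length-two factors). Since all letters lie in $S\setminus\{1\}$, the word is strict $S$-normal, and uniqueness of strict $S$-normal decompositions then forces both branches to end at the same word, proving confluence. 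Finally, the Artin-Tits corollary follows because any Artin-Tits monoid is left-cancellative, embeds in its torsion-free enveloping group (so contains no nontrivial invertible element), and admits a finite Garside family (the Coxeter group $W$ in the spherical case, or the family of~\cite{DDH} in general); applying (a)--(c) to such a finite $S$ yields a finite convergent presentation.
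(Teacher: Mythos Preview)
The paper does not supply its own proof of this proposition: it is stated with attribution to~\cite[Proposition~6.17]{DG} and~\cite[Corollary~6.6.4]{Gui} and no proof environment follows. So there is nothing in the paper to compare your argument against directly; I can only assess your sketch on its own terms.

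Your decomposition into (a) presentation, (b) termination, (c) confluence is the natural one, and parts (a) and (c) are clean. In particular, your confluence argument is exactly right: sources of rules have length two over $S\setminus\{1\}$, so the only critical branchings are length-three overlaps; each branch terminates (given (b)) at an irreducible word; irreducibility means every length-two factor is $S$-normal, hence the whole word is strict $S$-normal; uniqueness of strict $S$-normal forms under the hypothesis of no nontrivial invertibles closes the branching. Your observation in (b) that a non-collapsing step $s|t \Rightarrow s'|t'$ forces $s \prec s'$ strictly (since $s \preceq s'$ by left-weightedness, and $s = s'$ would give $t = t'$ by left-cancellation, contradicting non-normality) is the right local ingredient.

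The only genuine soft spot is that you do not actually exhibit the well-founded termination order: you correctly flag that a naive count of non-greedy factors fails because a step can create new defects at adjacent positions, and then you defer the ``explicit bookkeeping'' to the same references the paper cites. That is honest, but it means your (b) is a pointer rather than a proof. If you want the argument to stand alone, the standard device is to compare the word $s_1|\cdots|s_q$ letter-by-letter against the fixed strict $S$-normal form $r_1|\cdots|r_p$ of its evaluation, using that each $s_i$ is a left divisor of the corresponding $r_i$ (an iterated application of the left-weighted property), and to order words by the tuple of right-complements in a suitable multiset or lexicographic fashion; this is what the cited sources do.
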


A Garside family also induces a ``smaller'' presentation, beside
the one provided by Proposition~\ref{cor:dg-6.3.3}, which will be
instrumental in deriving our main result in the next section. The
following proposition is adapted from~\cite[Propositions~6.10 and 6.15]{DG}.
\begin{prop}
\label{prop:dg-6.3.1}Assume that $M$ is a left-cancellative monoid
containing no nontrivial invertible element, and $S\subseteq M$ is
a Garside family containing $1$. Then $M$ admits, as a presentation,
the $2$-polygraph $\Gar_{2}\left(S\right)$ which contains a single
generating $0$-cell, one generating $1$-cell for every element of
$S\setminus\left\{ 1\right\} $, and one generating $2$-cell of the
form
\begin{equation}
s|t\Rightarrow st,\label{eq:garside_triangular_relations}
\end{equation}
for all $s$ and $t$ in $S\setminus\left\{ 1\right\} $ whose product
$st$ in $M$ lies in $S$.
\end{prop}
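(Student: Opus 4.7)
The plan is to use Proposition~\ref{cor:dg-6.3.3} as an intermediary. It already gives a convergent presentation of $M$ on the same set of generating $1$-cells $S\setminus\{1\}$, but with the more general quadratic relations $s|t\Rightarrow N^S(s|t)$. Since the triangular relations $s|t\Rightarrow st$ (for $st\in S$) hold in $M$, there is a canonical surjection $\overline{\Gar_2(S)}\twoheadrightarrow M$, and to turn it into an isomorphism it suffices to derive every quadratic relation from the triangular ones.

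Fix $s,t\in S\setminus\{1\}$ with $s|t$ not $S$-normal, and write $N^S(s|t)=s_1|t_1|\cdots|t_k$. The first step is to show that $k\leq 1$. By Lemma~\ref{lem:garside-implies-left-weighted} applied to the word $s|t$, we have $s\preceq s_1$; write $s_1=su$ with $u\in M$. Since $u$ right-divides $s_1\in S$, and $S$ is closed under right divisors by Proposition~\ref{prop:garside-implies-closed-comultiple}, we get $u\in S$. Left-cancelling $s_1\cdot t_1\cdots t_k=st$ yields $u\cdot t_1\cdots t_k=t$, so the element $v:=t_1\cdots t_k$ is a right divisor of $t\in S$, hence $v\in S$. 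But $t_1|\cdots|t_k$ is an $S$-normal decomposition of $v$ (a suffix of an $S$-normal word is still $S$-normal, as $S$-greediness is a property of length-two factors), and $v\in S$ itself admits the length-$1$ strict $S$-normal decomposition $v$. Uniqueness of strict $S$-normal decompositions (valid because $M$ has no nontrivial invertibles) forces $k\leq 1$.

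If $k=0$, then $st=s_1\in S$ and the relation $s|t\Rightarrow st$ is directly a triangular generator of $\Gar_2(S)$. If $k=1$, then $u\neq 1$ (otherwise $s_1=s$ and $t_1=t$ would make $s|t$ already $S$-normal) and $t_1\neq 1$ (otherwise $st=s_1\in S$, contradicting $k=1$); moreover $su=s_1\in S$ and $ut_1=t\in S$, so both $s|u\Rightarrow s_1$ and $u|t_1\Rightarrow t$ are triangular generators. The zigzag
\[
s|t \;\Leftarrow\; s|u|t_1 \;\Rightarrow\; s_1|t_1
\]
exhibits the required equivalence in $\Gar_2(S)$. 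Hence every relation of the convergent presentation from Proposition~\ref{cor:dg-6.3.3} is derivable in $\Gar_2(S)$, so $\Gar_2(S)$ presents $M$. The main technical point is the bound $k\leq 1$, which crucially combines the left-weightedness of $N^S$ (Lemma~\ref{lem:garside-implies-left-weighted}) with the closure of $S$ under right divisors (Proposition~\ref{prop:garside-implies-closed-comultiple}); once this bound is in hand, the case analysis and the zigzag construction are short.
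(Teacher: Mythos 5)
Your proof is correct and takes essentially the same route as the paper: both pass through the convergent quadratic presentation of Proposition~\ref{cor:dg-6.3.3} and derive each relation $s|t\Rightarrow N^{S}\left(s|t\right)$ from the triangular ones using Lemma~\ref{lem:garside-implies-left-weighted}, closure of $S$ under right divisor (Proposition~\ref{prop:garside-implies-closed-comultiple}), left cancellation, and the same zigzag $s|t\Leftarrow s|u|t_{1}\Rightarrow s_{1}|t_{1}$ (the paper's $s|r|t'$). The only difference is that the paper writes $N^{S}\left(s|t\right)=s'|t'$ without comment, implicitly invoking the fact that an element of $S^{2}$ has $S$-length at most two, whereas you prove this bound ($k\leq1$) explicitly via uniqueness of strict $S$-normal decompositions — a useful clarification rather than a divergence.
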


\begin{proof}
Proposition~\ref{cor:dg-6.3.3} grants a presentation of $M$ in terms
of $S$ by the relations~\ref{eq:garside_quadratic_relations}. Let
us show that the relations~\eqref{eq:garside_triangular_relations}
are included in the relations~\eqref{eq:garside_quadratic_relations}.
If $s$ and $t$ in $S\setminus\left\{ 1\right\} $ are such that~$st$ lies in $S\setminus\left\{ 1\right\} $, then the strict $S$-decomposition
of $s|t$ is $st$, hence $N^{S}\left(s|t\right)=st$. Otherwise,
$st=1$ holds and yields $N^{S}\left(s|t\right)=1$. In both cases,
the strict $S$-normal decomposition of $s|t$ is~$st$. Hence, the
relations~\eqref{eq:garside_triangular_relations} are included in
the relations~\eqref{eq:garside_quadratic_relations}.

Conversely, let us show that each relation~\eqref{eq:garside_quadratic_relations},
with $s$ and $t$ in $S\setminus\left\{ 1\right\} $, follows from
a finite number of relations~\eqref{eq:garside_triangular_relations}.
Assume that $s$ and $t$ lie in $S\setminus\left\{ 1\right\} $ and
let $s'|t'\coloneqq N^{S}\left(s|t\right)$. If $t'=1$ holds, it
implies $s'=st$, which is a~\eqref{eq:garside_triangular_relations}
relation, so the result is true in this case. Otherwise, Lemma~\ref{lem:garside-implies-left-weighted}
implies that there exists $r$ in $M$, satisfying $sr=s'$, which
is a~\eqref{eq:garside_triangular_relations} relation. Being a right
divisor of $s'\in S$, the element $r$ also lies in $S$ by Proposition
\ref{prop:garside-implies-closed-comultiple}. Multiplying the equality
$sr=s'$ by $t'$ on the right yields $srt'=s't'=st$. Then the left
cancellation property of~$M$ implies $rt'=t$, which is a~\eqref{eq:garside_triangular_relations}
relation. Since the relation $s|t=s|r|t'=s'|t'$ follows from the~\eqref{eq:garside_triangular_relations} relations $s|r=s'$ and $r|t'=t$,
the result is true in this case, too.
\end{proof}
We call the $2$-polygraph $\Gar_{2}\left(S\right)$ the \textbf{Garside's
presentation} of $M$, with respect to the Garside family $S$. We
study it in the next section. Here, let us just observe that it extends
the Garside's presentation of Artin-Tits monoids, recalled in Subsection
\ref{subsec:ggm-coherent-presentations}.
\begin{example}
\label{exa:garside's-presentation-artin-tits-monoid}Garside's presentation
$\Gar_{2}\left(W\right)$ of an Artin-Tits monoid $B^{+}\left(W\right)$
is an instance of a Garside's presentation $\Gar_{2}\left(S\right)$
with respect to a Garside family $S$. Indeed, the Artin-Tits monoid
$B^{+}\left(W\right)$ is a cancellative monoid (see~\cite{BS}) with
no nontrivial invertible element, and the Coxeter group $W$ is a
Garside family containing $1$; hence $B^{+}\left(W\right)$ meets
all the requirements of Proposition~\ref{prop:dg-6.3.1} which, for
this particular input of $W$ for $S$, produces precisely Garside's
presentation $\Gar_{2}\left(W\right)$.
\end{example}

\begin{example}
\label{exa:garside's-presentation-garside-monoid}Garside's presentation
$\Gar_{2}\left(M\right)$ of a Garside monoid $M$ is another instance
of a Garside's presentation with respect to Garside family $S$. Namely,
$M$ is cancellative, by definition. Note that the property~\eqref{enu:garside-monoid-2}
of Garside monoid implies that it has no nontrivial invertible element.
All the divisors of $\Delta$ form a Garside family. If we take this
Garside family for $S$, Proposition~\ref{prop:dg-6.3.1} yields Garside's
presentation $\Gar_{2}\left(M\right)$.
\end{example}

\section{\label{sec:contribution}Coherent presentations from Garside families}

Having recalled necessary notions and results in previous sections,
in this section we aim to state and prove Theorem~\ref{thm:coherent-presentation-garside-family}
which provides a unifying generalisation of theorems recalled in Examples
\ref{exa:artin-tits-coherent} and~\ref{exa:garside-monoid-coherent}.

\subsection{\label{subsec:garside's-presentation-garside-family}Main statement
and sketch of proof}

In this subsection, we adapt some notation from~\cite{GGM} and set
a convenient noetherianity condition. Then we state our main result.

Let $M$ be a monoid generated by a set $S$ containing $1$. We
define the notations \typedeux{1} and \typedeux{0}, as follows.
Given two elements $u$ and $v$ of $S\setminus\left\{ 1\right\} $,
we write:
\begin{eqnarray*}
\typedeux{1} & \iff & uv\in S,\\
\typedeux{0} & \iff & uv\notin S.
\end{eqnarray*}
The notation extends to a greater number of elements. For three elements
$u,v,w\in S$, we write \typetrois{?} if both conditions $uv\in S$
and $vw\in S$ hold. The condition \typetrois{?} splits into two
mutually exclusive subcases:
\begin{eqnarray*}
\typetrois{1} & \iff & \begin{pmatrix}\typetrois{?} & \textrm{and} & uvw\in S\end{pmatrix},\\
\typetrois{0} & \iff & \begin{pmatrix}\typetrois{?} & \textrm{and} & uvw\notin S\end{pmatrix}.
\end{eqnarray*}

We formally redefine symbols $\Gar_{2}$ and $\Gar_{3}$ in our general
context as follows. The $2$-polygraph $\Gar_{2}\left(S\right)$ contains:
a single generating $0$-cell; one generating $1$-cell for every
element of $S\setminus\left\{ 1\right\} $; one generating $2$-cell
of the form
\[
\alpha_{u,v}:u|v\Rightarrow uv,
\]
for all $u$ and $v$ in $S\setminus\left\{ 1\right\} $ such that
\typedeux{1} holds. Here, $u|v$ denotes product in $S^{\ast}$,
whereas $uv$ denotes product in $M$. The $\left(3,1\right)$-polygraph
$\Gar_{3}\left(S\right)$ is consisting of the $2$-polygraph $\Gar_{2}\left(S\right)$
and the generating $3$-cells of the form \[
\begin{tikzcd}[row sep=scriptsize, column sep=scriptsize] & uv|w \arrow[Rightarrow, rd, bend left=10, "\alpha_{uv,w}"] \arrow[dd, phantom, "A_{u,v,w}"] & \\ u|v|w \arrow[Rightarrow, ru, bend left=10, "\alpha_{u,v}| w"] \arrow[Rightarrow, rd, bend right=10, "u|\alpha_{v,w}"'] & & uvw \\ & u|vw \arrow[Rightarrow, ru, bend right=10, "\alpha_{u,vw}"'] \end{tikzcd}
\]for all $u$, $v$ and $w$ in $S\setminus\left\{ 1\right\} $ such
that \typetrois{1}.
\begin{rem}
Note that the $2$-polygraph $\Gar_{2}\left(S\right)$ is not a presentation
of $M$, in general. Consequently, since $\Gar_{3}\left(S\right)$
is an extended presentation of a monoid presented by $\Gar_{2}\left(S\right)$,
it is not necessarily an extended presentation of $M$. Proposition
\ref{prop:dg-6.3.1} gives sufficient conditions for $\Gar_{2}\left(S\right)$
to be a presentation of $M$, thus making $\Gar_{3}\left(S\right)$
an extended presentation of $M$.

To formulate our main result, we need a restriction of right noetherianity
to a Garside family.
\end{rem}

\begin{defn}
Given a Garside family $S$ in a left-cancellative monoid $M$, we
say that $S$ is \textbf{right-noetherian} if for every $g$ in $S$,
every increasing sequence of proper left divisors in $S$ of $g$
with respect to proper left divisibility is finite.
\end{defn}

\begin{example}
\label{rem:ggm-noetherian}Every Garside family in a right-noetherian
left-cancellative monoid $M$ is right-noetherian.
\end{example}

Now, we state the main result.
\begin{thm}
\label{thm:coherent-presentation-garside-family}Assume that $M$
is a left-cancellative monoid containing no nontrivial invertible
element, and admitting a right-noetherian Garside family $S$ containing
$1$. If $M$ admits right-mcms, then $M$ admits the $\left(3,1\right)$-polygraph
$\Gar_{3}\left(S\right)$ as a coherent presentation.
\end{thm}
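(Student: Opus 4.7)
My plan is to apply the homotopical completion--reduction procedure of Section~\ref{sec:homotopical-transformations} to the $2$-polygraph $\Gar_{2}\left(S\right)$, which by Proposition~\ref{prop:dg-6.3.1} is a presentation of $M$. Termination is immediate, since every generating $2$-cell $\alpha_{u,v}\colon u|v\Rightarrow uv$ strictly decreases the word length over $S\setminus\left\{1\right\}$, so no termination order beyond word length is needed at the starting point.

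The first step is to analyse critical branchings. A pair $\alpha_{u,v}|w$ and $u|\alpha_{v,w}$ overlaps exactly when both $uv$ and $vw$ lie in $S$. When moreover $uvw$ lies in $S$, the branching closes confluently through $\alpha_{uv,w}$ and $\alpha_{u,vw}$ and Squier completion contributes exactly the $3$-cell $A_{u,v,w}$ of the statement. The delicate case, absent from the Artin-Tits and Garside monoid settings of~\cite{GGM}, is when $uvw\notin S$: both targets $uv|w$ and $u|vw$ are irreducible in $\Gar_{2}\left(S\right)$ yet parallel, so new generating $2$-cells must be adjoined. To build them I would invoke the hypothesis that $M$ admits right-mcms, the closure of $S$ under right-mcm supplied by Proposition~\ref{prop:garside-implies-closed-comultiple} and Lemma~\ref{lem:closed-comultiple-iff-closed-mcm}, and Lemma~\ref{lem:garside-implies-left-weighted}, which controls the leftmost letter of the strict $S$-normal decomposition of $uvw$. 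Writing that decomposition as $u'|v'$, one has $uv\preceq u'$ and $u\preceq u'$, and the natural $2$-cells to adjoin are $uv|w\Rightarrow u'|v'$ and $u|vw\Rightarrow u'|v'$. Termination is preserved after each such adjunction because the targets are strictly greater than the sources in a well-founded order built from proper left-divisibility on $S$; right-noetherianity of $S$ bounds the possible chains, which licences the use of Theorem~\ref{thm:knuth-bendix} through Remark~\ref{rem:alternative-knuth-bendix} without fixing an ambient total termination order. Squier completion then adjoins $3$-cells witnessing confluence of every critical branching created along the iteration, giving by Theorem~\ref{thm:homotopical-completion} a coherent convergent extended presentation of $M$.

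The second stage is homotopical reduction. My goal is to construct a collapsible part $\Gamma=\left(\emptyset,\Gamma_{3},\Gamma_{4}\right)$ which collapses every $2$-cell adjoined above and every Squier $3$-cell attached to a critical branching involving one, while leaving the $3$-cells $A_{u,v,w}$ intact. Following the scheme illustrated in Example~\ref{exa:klein-bottle-reduction} and formalised in Subsection~\ref{subsec:special-case}, for each adjoined $2$-cell I would locate a generating triple confluence, attached to a critical triple branching on a suitable overlap of an $\alpha$-rule with a new rule, in which that $2$-cell appears exactly once on the boundary; the well-founded orders on $2$- and $3$-cells required of a collapsible part are induced by right-noetherianity of $S$. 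Collapsing according to $\Gamma$ and invoking Theorem~\ref{thm:completion-reduction} then yields $\Gar_{3}\left(S\right)$ as a coherent presentation of $M$.

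The main obstacle, in my view, lies in this reduction stage rather than in the completion. One must verify, systematically, that every $2$-cell introduced in the case $uvw\notin S$, together with every Squier $3$-cell attached to a critical branching that it produces, is expressible through the $A_{u,v,w}$ via an appropriate triple confluence, and that all these expressions can be organised coherently into a single well-founded collapsible part. This is precisely the point at which the right-mcm and right-noetherianity hypotheses combine in a way that genuinely extends the arguments of~\cite{GGM}, and where the bulk of the combinatorial work of the proof will lie.
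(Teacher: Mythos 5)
Your global architecture (homotopical completion of $\Gar_{2}\left(S\right)$, then homotopical reduction down to $\Gar_{3}\left(S\right)$) is the paper's, but there are genuine gaps at both stages, and the hardest points of the actual proof are never engaged. At the completion stage you adjoin, for each non-confluent branching, \emph{two} rules $uv|w\Rightarrow u'|v'$ and $u|vw\Rightarrow u'|v'$ toward the strict $S$-normal decomposition of $uvw$; the paper instead adjoins the single cell $\beta_{u,v,w}\colon u|vw\Rightarrow uv|w$ linking the two already-reduced targets, which is what the Knuth--Bendix step of Subsection~\ref{subsec:knuth-bendix} actually produces (your variant adds rules whose sources are irreducible words, steering towards the quadratic system of Proposition~\ref{cor:dg-6.3.3}, so Theorem~\ref{thm:knuth-bendix} no longer applies as stated and the whole subsequent branching analysis would have to be redone for a different rule set). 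More seriously, your termination claim is asserted rather than proved: ``targets strictly greater than sources'' has the inequality backwards for a termination order, and right-noetherianity of $S$ only forbids infinite increasing chains of left divisors of a \emph{fixed} element of $S$. The paper's Proposition~\ref{prop:termination} needs a genuine argument: take a hypothetical infinite path of $\beta$-steps with source of minimal length, show the leftmost letter must be modified infinitely often, and observe via Lemma~\ref{lem:garside-implies-left-weighted} that all the successive heads left-divide the fixed head of the $S$-normal form --- only then can right-noetherianity be invoked. None of this bounding step appears in your sketch.

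You also misattribute the role of the right-mcm hypothesis, and thereby miss the genuinely new critical branchings. Constructing the new $2$-cells needs no mcms at all (normal decompositions of elements of $S^{2}$ exist because $S$ is a Garside family); where right-mcms, closure of $S$ under right-mcm and right divisor (Proposition~\ref{prop:garside-implies-closed-comultiple} with Lemma~\ref{lem:closed-comultiple-iff-closed-mcm}), and Lemma~\ref{lem:left-multiple-of-right-mcm} are actually used is in proving confluence of the \emph{same-source} branchings $\left\{ \beta_{u,v_{1},w_{1}},\beta_{u,v_{2},w_{2}}\right\} $ with $v_{1}w_{1}=v_{2}w_{2}$, which have no analogue in the Artin-Tits or Garside settings of~\cite{GGM} and which produce the families $H$ and $I$ (for instance, Lemma~\ref{lem:left-multiple-of-right-mcm} is precisely what excludes $y=1$ there). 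Your proposal never identifies these branchings. Finally, the reduction stage --- which you yourself flag as ``the bulk of the combinatorial work'' --- is exactly the content the theorem requires: the paper enumerates twelve families of generating $3$-cells ($A,B,C,D,E,E',F,F',G,G',H,I$), takes $\Gamma_{3}$ to be the $B$-family (collapsing the $\beta$'s, with $\beta>\alpha$), and builds $\Gamma_{4}$ from seven generating triple confluences inherited from~\cite{GGM} plus three explicitly constructed new ones targeting $E'$, $F'$ and $G'$, ordered $G'>F'>E'>I>H>\cdots>C$. Note moreover that with your two rules per branching, the single Squier $3$-cell attached to that branching can collapse only one of them, and your plan supplies no source of collapsible $3$-cells for the other. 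As it stands, the proposal is a roadmap matching the paper's strategy in outline while omitting every step in which the hypotheses of the theorem actually do work.
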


Before we proceed to prove the theorem, let us show that it gives
a common generalisation of the two distinct directions of extension,
given in~\cite{GGM}, of Deligne's result~\cite[Theorem~1.5]{Del}.
\begin{cor}[{\cite[Theorem~3.1.3]{GGM}}]
\label{thm:ggm-3.1.3-1}\label{rem:unifying-generalisation}For
every Coxeter group $W$, the Artin-Tits monoid $B^{+}\left(W\right)$
admits $\Gar_{3}\left(W\right)$ as a coherent presentation.
\end{cor}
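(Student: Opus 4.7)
The plan is simply to verify that the quadruple $(M, S) = (B^{+}(W), W)$ satisfies the hypotheses of Theorem~\ref{thm:coherent-presentation-garside-family} and then invoke that theorem. The only nontrivial work lies in checking the four assumptions, each of which is classical in the Artin-Tits literature; the rest is matching definitions.

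First I would confirm the basic structural hypotheses on $M = B^{+}(W)$: left-cancellativity of $B^{+}(W)$ is the classical Brieskorn-Saito / Michel result cited in the paper, and the absence of nontrivial invertible elements follows from the existence of the length homomorphism $B^{+}(W) \to \mathbb{N}$ (sending each Artin generator to $1$), whose only preimage of $0$ is the identity. Next, I would invoke Example~\ref{exa:garside's-presentation-artin-tits-monoid} (and the references given there) to recognise that $W$, seen inside $B^{+}(W)$ via the canonical lift of reduced expressions, is a Garside family containing $1$.

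Third, I would check right-noetherianity of $W$ as a Garside family: by Example~\ref{exa:noetherian-artin-tits-garside}, $B^{+}(W)$ is noetherian because proper left divisibility strictly decreases the Artin-Tits length, and by Example~\ref{rem:ggm-noetherian} every Garside family in a right-noetherian left-cancellative monoid is right-noetherian; in particular so is $W$. Finally, $B^{+}(W)$ admits right-mcms because any two elements of an Artin-Tits monoid that share a common right multiple share a right-lcm (a standard consequence of Garside/Brieskorn-Saito theory), and right-lcms are a fortiori right-mcms.

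With the four hypotheses in hand, Theorem~\ref{thm:coherent-presentation-garside-family} yields directly that $B^{+}(W)$ admits $\Gar_{3}(W)$ as a coherent presentation, provided one observes that the instantiation $S = W$ of the combinatorial condition \typetrois{1} (namely $uv, vw, uvw \in W$) reproduces exactly the length additivity condition $\ell(uv) = \ell(u)+\ell(v)$, $\ell(vw) = \ell(v)+\ell(w)$, $\ell(uvw) = \ell(u)+\ell(v)+\ell(w)$ used to index the $3$-cells $A_{u,v,w}$ in Example~\ref{exa:artin-tits-coherent}. The main (and essentially only) difficulty is bookkeeping around existing results on $B^{+}(W)$; no new rewriting argument is needed once Theorem~\ref{thm:coherent-presentation-garside-family} is available.
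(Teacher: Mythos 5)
Your proposal is correct and follows essentially the same route as the paper: it verifies the hypotheses of Theorem~\ref{thm:coherent-presentation-garside-family} for $(M,S)=\left(B^{+}\left(W\right),W\right)$ and identifies the condition $uv,vw,uvw\in W$ with the length-additivity conditions indexing the $3$-cells in Example~\ref{exa:artin-tits-coherent}, exactly as the paper does (which cites Matsumoto's lemma for that step). The only minor difference is your justification of right-mcms via the Brieskorn--Saito conditional right-lcm property, where the paper instead relies on noetherianity (a left-cancellative left-noetherian monoid admits right-mcms, as made explicit in the proof of Corollary~\ref{cor:coherent-presentation-noetherian}); both are valid.
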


\begin{proof}
Let us restrict the conditions \typedeux{1} and \typetrois{1}, defined
in the beginning of the current subsection, to the case of the Artin-Tits
monoid $B^{+}\left(W\right)$, with the Coxeter group $W$ as Garside
family $S$. Observe that, for $u,v\in W\setminus\left\{ 1\right\} $,
the condition \typedeux{1}, i.e.\ $uv\in W$, boils down to the condition
$\ell\left(uv\right)=\ell\left(u\right)+\ell\left(v\right)$ given
in Example~\ref{exa:artin-tits-coherent} (see Matsumoto's lemma,
e.g.~\cite[Corollary IX.1.11]{DDGKM}). Accordingly, the condition
\typetrois{1} becomes the conjunction of \typedeux{1} and \typedeuxbase{$v$}{$w$}{1}
and $\ell\left(uvw\right)=\ell\left(u\right)+\ell\left(v\right)+\ell\left(w\right)$.
Recall that Artin-Tits monoids and Garside monoids are cancellative
and noetherian (Example~\ref{exa:noetherian-artin-tits-garside}),
and that they contain no nontrivial invertible element. Consequently,
Theorem~\ref{thm:coherent-presentation-garside-family} specialises
to~\cite[Theorem~3.1.3]{GGM} when a monoid considered is Artin-Tits
with Coxeter group as a Garside family.
\end{proof}
Similarly, one shows that Theorem~\ref{thm:coherent-presentation-garside-family}
specialises to~\cite[Theorem~3.3.3]{GGM} when a monoid considered
is Garside with $S$ being the set of divisors of the Garside element.
\begin{cor}[{\cite[Theorem~3.3.3]{GGM}}]
\label{thm:ggm-3.3.3-1}Every Garside monoid $M$ admits $\Gar_{3}\left(M\right)$
as a coherent presentation.
\end{cor}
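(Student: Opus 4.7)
The plan is to derive this corollary from the main Theorem~\ref{thm:coherent-presentation-garside-family} by taking the standard Garside family $S$ consisting of all divisors of the Garside element $\Delta$, and then checking that the resulting $(3,1)$-polygraph $\Gar_{3}(S)$ coincides with the $\Gar_{3}(M)$ described in Example~\ref{exa:garside-monoid-coherent}. Since the main theorem is already proven, this reduces to verifying hypotheses and matching generating cells.

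First, I would verify each hypothesis of Theorem~\ref{thm:coherent-presentation-garside-family} for a Garside monoid $(M,\Delta)$ with $S$ the family of divisors of $\Delta$. Left-cancellativity follows immediately from condition~(1) in the definition of a Garside monoid (cancellativity). The absence of nontrivial invertible elements follows from condition~(\ref{enu:garside-monoid-2}): if $f$ is invertible with inverse $g$, then $\lambda(f)+\lambda(g) \leq \lambda(fg)=\lambda(1)=0$, forcing $\lambda(f)=0$ and hence $f=1$. The family $S$ of divisors of $\Delta$ is a Garside family by the result cited in Subsection~\ref{subsec:garside-family} (e.g.\ \cite[Proposition~2.18]{DDM}), and it contains $1$ since $1$ divides $\Delta$. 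Right-noetherianity of $S$ follows by noting that $M$ is noetherian (Example~\ref{exa:noetherian-artin-tits-garside}) and invoking Example~\ref{rem:ggm-noetherian}. Finally, $M$ admits right-mcms: condition~(3) of the definition of a Garside monoid guarantees the existence of right-lcms for every pair, and in a monoid admitting (conditional) right-lcms, the notions of right-mcm and right-lcm coincide, as recalled in Subsection~4.1.

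Next, I would check that the $(3,1)$-polygraph produced by Theorem~\ref{thm:coherent-presentation-garside-family} for this choice of $S$ is literally the $\Gar_{3}(M)$ of Example~\ref{exa:garside-monoid-coherent}. The generating $1$-cells match by definition: both are elements of $S\setminus\{1\}$, i.e.\ divisors of $\Delta$ other than~$1$. For the generating $2$-cells $\alpha_{u,v}$, the condition $uv \in S$ in the general setup has to be identified with the condition $\partial(u)\wedge v=v$ used in Example~\ref{exa:garside-monoid-coherent}; this identification is carried out explicitly in that example, where it is shown that $\partial(u)\wedge v=v$ is equivalent to $uv$ being a divisor of $\Delta$. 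For the generating $3$-cells $A_{u,v,w}$, the condition \typetrois{1} unfolds to $uv, vw, uvw \in S$, which is precisely the condition ``$uv$, $vw$ and $uvw$ are divisors of $\Delta$'' stated in Example~\ref{exa:garside-monoid-coherent}.

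Once both verifications are in place, Theorem~\ref{thm:coherent-presentation-garside-family} applied to $(M,S)$ yields directly that $\Gar_{3}(M)=\Gar_{3}(S)$ is a coherent presentation of $M$, which is the statement of the corollary. I do not expect any genuine obstacle here; the only mild care is in confirming the translation between the divisibility condition ``$\partial(u)\wedge v=v$'' and the membership condition ``$uv \in S$'', but this equivalence is already laid out in Example~\ref{exa:garside-monoid-coherent} and uses only the defining property of $\partial$ together with closure of $S$ under right divisor.
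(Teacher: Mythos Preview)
Your proposal is correct and follows essentially the same approach as the paper: specialise Theorem~\ref{thm:coherent-presentation-garside-family} to $S$ the family of divisors of $\Delta$, verify the hypotheses (cancellativity, no nontrivial invertibles via property~\eqref{enu:garside-monoid-2}, noetherianity, right-mcms), and identify the resulting $\Gar_{3}(S)$ with the $\Gar_{3}(M)$ of Example~\ref{exa:garside-monoid-coherent}. Your write-up is in fact more explicit than the paper's own proof, which compresses the hypothesis checks into a couple of sentences.
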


\begin{proof}
If we restrict the conditions \typedeux{1} and \typetrois{1} to
the case of a Garside monoid $\left(M,\Delta\right)$, with divisors
of $\Delta$ as Garside family $S$, then we get precisely our equivalent
reformulation, given in Example~\ref{exa:garside-monoid-coherent},
of the conditions stated in~\cite[Subsection~3.3]{GGM}. Literally,
the condition \typetrois{1} then says that $uv$ is an element of
the set of divisors of $\Delta$. Garside monoids are cancellative
by definition. Note that the property~\eqref{enu:garside-monoid-2}
of a Garside monoid implies noetherianity as well as the fact that
there are no nontrivial invertible elements.
\end{proof}
The following diagram summarises key steps of the proof and~\ref{exa:garside-monoid-coherent}
and thus motivates the next three subsections (which together contain
the proof).\[
\begin{tikzcd}[row sep=large, column sep=normal] \begin{array}{c}\textrm{Gar}_{3}(S)\\ \textrm{coherent, reduced} \end{array} & & \begin{array}{c}\textrm{\underline{Gar}}_{3}(S)\\ \textrm{coherent, convergent} \end{array} \arrow[ll, "\textrm{\normalsize homotopical}"', "\textrm{\normalsize reduction}"] \\ \\ \begin{array}{c}\textrm{Gar}_{2}(S)\\ \textrm{terminating} \end{array} \arrow[rr, "\textrm{\normalsize Knuth-Bendix}", "\textrm{\normalsize completion}"'] \arrow[uu, "\textrm{\normalsize homotopical}", "\textrm{\normalsize completion-reduction}" near start] & & \begin{array}{c}\textrm{\underline{Gar}}_{2}(S)\\ \textrm{convergent} \end{array} \arrow[uu, "\textrm{\normalsize Squier}"', "\textrm{\normalsize completion}"' near start] \end{tikzcd}
\]

In Subsection~\ref{subsec:termination}, starting with the Garside's
presentation $\Gar_{2}\left(S\right)$ of $M$, we add the generating
$2$-cells $\beta$ which results in a terminating presentation $\underline{\Gar}_{2}\left(S\right)$.
This is, in fact, a convergent presentation, namely a Knuth-Bendix
completion of $\Gar_{2}\left(S\right)$, but we do not prove it until
Subsection~\ref{subsec:main}. Nevertheless, this hindsight prompts
us to begin Subsection~\ref{subsec:termination} with a formal definition
of the $2$-polygraph $\underline{\Gar}_{2}\left(S\right)$.

In Subsection~\ref{subsec:main}, first we formally compute a Squier
completion of the polygraph $\underline{\Gar}_{2}\left(S\right)$,
under certain assumptions on the monoid. We denote the resulting $\left(3,1\right)$-polygraph
by $\underline{\Gar}_{3}\left(S\right)$. Then we show that this construction
applies to a terminating presentation $\underline{\Gar}_{2}\left(S\right)$
of $M$ and produces a coherent convergent presentation $\underline{\Gar}_{3}\left(S\right)$.

Finally, in Subsection~\ref{subsec:reduction-garside-family}, we
compute a homotopical reduction of $\underline{\Gar}_{3}\left(S\right)$
to obtain the $\left(3,1\right)$-polygraph $\Gar_{3}\left(S\right)$
as a coherent presentation of $M$.

\subsection{\label{subsec:termination}Attaining termination}

In this subsection, we ensure that a certain presentation, denoted
$\underline{\Gar}_{2}\left(S\right)$, is terminating. This presentation
arises naturally as a result of applying the Knuth-Bendix completion
to the Garside's presentation $\Gar_{2}\left(S\right)$.  Hence the
motivation for the formal definition of the $2$-polygraph $\underline{\Gar}_{2}\left(S\right)$.

Let $M$ be a monoid generated by a set $S$ containing $1$. Observe
that the $2$-polygraph $\Gar_{2}\left(S\right)$ has exactly one
critical branching for all $u$, $v$ and $w$ of $S\setminus\left\{ 1\right\} $
such that \typetrois{?} holds:

\[
\begin{tikzcd} uv|w & u|v|w \arrow[Rightarrow, l, "\alpha_{u,v}| w"'] \arrow[Rightarrow, r, "u|\alpha_{v,w}"] & u|vw. \end{tikzcd} 
\]If the subcase \typetrois{1} holds, then the branching is already
confluent. Otherwise \typetrois{0} holds, and the branching requires
a new generating $2$-cell to reach confluence, so the generating
$2$-cell $\beta_{u,v,w}:u|vw\Rightarrow uv|w$ is adjoined. We write
$\underline{\Gar}_{2}\left(S\right)$ for the $2$-polygraph which
contains a single generating $0$-cell, one generating $1$-cell for
every element of $S\setminus\left\{ 1\right\} $, the generating $2$-cells
\begin{align*}
\alpha_{u,v} & :u|v\Rightarrow uv, &  & u,v\in S\setminus\left\{ 1\right\} ,\quad\typedeux{1}\,,\\
\beta_{u,v,w} & :u|vw\Rightarrow uv|w, &  & u,v,w\in S\setminus\left\{ 1\right\} ,\quad\typetrois{0}\,.
\end{align*}

To show that the $2$-polygraph $\underline{\Gar}_{2}\left(S\right)$,
under certain conditions, is a Knuth-Bendix completion of the $2$-polygraph
$\Gar_{2}\left(S\right)$, we need to ensure two things: a way to
maintain a terminating presentation in the sense of Remark~\ref{rem:alternative-knuth-bendix},
and a demonstration that all new critical branchings caused by the
generating $2$-cells $\beta$ are confluent. These are respectively
given by Proposition~\ref{prop:termination}, and the proof of Proposition
\ref{prop:completion-assuming-termination}.

For an element $u$ of $S^{\ast}$, where $S$ is a set, we use the
following notations: $\ell\left(u\right)$ is the $S$-length of~$u$,
$\h\left(u\right)$ is the leftmost letter of~$u$, and~$\t\left(u\right)$ is
the word obtained by removing the letter~$\h\left(u\right)$ from~$u$.
\begin{prop}
\label{prop:termination}Assume that $M$ is a left-cancellative monoid
containing no nontrivial invertible element, admitting a right-noetherian
Garside family $S$ containing $1$. Then the $2$-polygraph $\underline{\Gar}_{2}\left(S\right)$
is terminating.
\end{prop}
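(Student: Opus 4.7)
The plan is to prove termination by induction on the length of the initial word, exploiting the fact that the two families of rules behave very differently: $\alpha_{u,v}$ strictly decreases the word length, while $\beta_{u,v,w}$ preserves length but strictly increases the leftmost letter in the left-divisibility order of $S$. An infinite reduction must therefore eventually consist only of $\beta$-steps, and within such a stretch the leftmost letter cannot keep increasing indefinitely, thanks to the Garside-family machinery.

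First I would handle the base case $n \leq 1$, where neither rule applies and termination is vacuous. For the inductive step, fix an initial word $w_0$ of length $n \geq 2$ and assume termination for all words of length $<n$. Given a hypothetical infinite reduction sequence $w_0 \to w_1 \to \cdots$, I would observe that the sequence $|w_i|$ is non-increasing, hence eventually constant; if the common value falls below $n$, the inductive hypothesis applied at the drop point yields a contradiction. So I may assume $|w_i| = n$ for all $i$, which forces every rewrite to be a $\beta$.

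Next I would analyse the sequence of leftmost letters $\h(w_i)$. A $\beta$ applied at position~$1$ replaces $u = \h(w_i)$ by $uv \in S$ with $v \in S \setminus \{1\}$; since $M$ has no nontrivial invertible element, this yields the proper left divisibility $\h(w_i) \prec \h(w_{i+1})$ in $S$. A $\beta$ at a position $\geq 2$ leaves $\h(w_i)$ unchanged. So the chain $\h(w_0), \h(w_1), \ldots$ is non-decreasing in $\prec$, with its strict increases forming an ascending chain in $(S, \prec)$. The essential use of the Garside hypothesis is to bound this chain: along the reduction, $g = \ev(w_i)$ is constant, and $g \neq 1$ (a nonempty product of non-invertible letters cannot be the identity, as $M$ has no nontrivial invertible elements). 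Applying Lemma~\ref{lem:garside-implies-left-weighted} to each $w_i$ gives $\h(w_i) \preceq n_1$, where $n_1 \in S \setminus \{1\}$ is the leftmost letter of the strict $S$-normal form of $g$. Right-noetherianity of $S$ applied to $n_1 \in S$ then forces the chain of strict increases to be finite, so $\h(w_i)$ is eventually constant at some $u$.

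From the stabilisation index $i_1$ onward, every rewrite acts at a position $\geq 2$ and therefore induces a genuine rewrite on the tail, producing an infinite reduction sequence on the tails $\t(w_{i_1}) \to \t(w_{i_1+1}) \to \cdots$, which are words of length $n-1$. The inductive hypothesis applied to $\t(w_{i_1})$ then delivers the contradiction. The main obstacle, and the reason the argument is subtle, is that right-noetherianity is only hypothesised for elements of $S$, whereas the common value $g$ need not lie in $S$; Lemma~\ref{lem:garside-implies-left-weighted} is precisely the bridge that transfers the bound from $g$ to the element $n_1$, which does lie in $S$.
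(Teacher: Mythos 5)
Your proof is correct and takes essentially the same approach as the paper's: both reduce to an infinite path consisting only of $\beta$-steps, observe that a $\beta$-step at position $1$ properly increases the leftmost letter within $S$ while other steps leave it fixed, bound that letter by the leftmost letter of the strict $S$-normal form of the (constant) evaluation via Lemma~\ref{lem:garside-implies-left-weighted}, and contradict the right-noetherianity of $S$. Your induction on word length is merely a repackaging of the paper's minimal-counterexample argument, in which a source of minimal $(S\setminus\{1\})$-length forces position $1$ to occur infinitely often and hence yields the forbidden infinite increasing chain of left divisors in $S$.
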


\begin{proof}
Let us first adopt some notation. For a generating $2$-cell $\chi$,
a $\chi$-step is a rewriting step in which the generating $2$-cell
involved is $\chi$, and $\chi_{i}$ is a $\chi$-step
\[
\begin{tikzcd} \bullet \arrow[r, "w"] & \bullet \arrow[r, shorten <=-2pt,  shorten >=-2pt, bend left=50, "u"] \arrow[r, phantom, bend left=50, ""{name=U, below}] \arrow[r, shorten <=-2pt,  shorten >=-2pt, bend right=50, "v"'] \arrow[r, phantom, bend right=50, ""{name=D}] & \bullet \arrow[Rightarrow, from=U, to=D, shorten <=2pt, shorten >=2pt, "\chi"] \arrow[r, shorten <=-2pt,  shorten >=-2pt, "w'"] & \bullet \end{tikzcd},
\]where $w$ has length $i-1$. If $i_{1}|i_{2}|\cdots$ is an infinite
sequence of positive integers, we denote the path $\cdots\circ\chi_{i_{2}}\circ\chi_{i_{1}}$
by $\chi_{i_{1}|i_{2}|\cdots}$.

Suppose that there is an infinite rewriting path. Note that an $\alpha$-step
strictly reduces the $\left(S\setminus\left\{ 1\right\} \right)$-length
of a word, so there can be only finitely many of the generating $2$-cells
$\alpha$ in any rewriting path. Hence, there is no loss in generality
if we consider only $\beta$-steps. Namely, we can simply consider
an infinite path after the last $\alpha$-step is applied and we are
left with an infinite path containing only $\beta$-steps. So assume
that there is an infinite rewriting path of $\beta$-steps. Let $\beta_{i_{1}|i_{2}|\cdots}$
be such a path having source $u$ of minimal $\left(S\setminus\left\{ 1\right\} \right)$-length.
Note that $\ell\left(u\right)$ is at least two.

Note that the minimality assumption about $\ell\left(u\right)$ implies
that the position $1$ occurs infinitely many times in $i_{1}|i_{2}|\cdots$.
Namely, if the position $1$ occurred only finitely many times in
$i_{1}|i_{2}|\cdots$, then $\beta_{i_{k+1}-1|i_{k+2}-1|\cdots}$
would be an infinite path starting from $\t\left(\beta_{i_{1}|i_{2}|\cdots|i_{k}}\left(u\right)\right)$
of $\left(S\setminus\left\{ 1\right\} \right)$-length $\ell\left(u\right)-1$,
where $i_{k}=1$ is the last occurrence of $1$ in the sequence $i_{1}|i_{2}|\cdots$.
That would contradict the minimality assumption about $\ell\left(u\right)$.
We write $i_{c_{1}}|i_{c_{2}}|\cdots$ for the constant subsequence
of the sequence $i_{1}|i_{2}|\cdots$ taking all the members whose
value is $1$. In other words, $c_{1}$ is the least $j$ such that
$i_{j}=1$; and for all $n$, we have that $c_{n+1}$ is the least
$j$ such that conditions $j>c_{n}$ and $i_{j}=1$ hold.

Let $u^{\left(n\right)}$ denote the $n$th word in the path $\beta_{i_{1}|i_{2}|\cdots}$,
that is the source of the step $\beta_{i_{n}}$. Note that the leftmost
letter of the word is modified by a step $\beta_{i_{n}}$ if, and
only if, $i_{n}$ equals $1$. In this case, the modification is such
that the current leftmost letter $\h\left(u^{\left(n\right)}\right)$
is a proper left divisor of the next leftmost letter $\h\left(u^{\left(n+1\right)}\right)$,
and the corresponding complement lies in $S$ by the definition of
the generating $2$-cells $\beta$. In formal terms, 
\begin{equation}
\h\left(u^{\left(n+1\right)}\right)=\begin{cases}
\h\left(u^{\left(n\right)}\right) & \textrm{if }i_{n+1}\neq1,\\
\h\left(u^{\left(n\right)}\right)f_{n}\textrm{ for some }f_{n}\in S & \textrm{if }i_{n+1}=1.
\end{cases}\label{eq:leftmost-entry}
\end{equation}
Let $s$ denote the leftmost letter of the $S$-normal form of $u$.
Observe that all the words in the path $\beta_{i_{1}|i_{2}|\cdots}$
have the same evaluation in $M$ and that, consequently, the equality
$N^{S}\left(u\right)=N^{S}\left(u^{\left(n\right)}\right)$ holds
for all $n$ by the definition of $N^{S}$. By Lemma~\ref{lem:garside-implies-left-weighted},
we have that $\h\left(u^{\left(n\right)}\right)$ left-divides $s$
for all $n$.

Consider the sequence 
\begin{equation}
\left(\h\left(u^{\left(c_{n}\right)}\right)\right)_{n=1}^{\infty}\label{eq:infinite-sequence}
\end{equation}
of elements of $S$ that divide $g$. Observe that, by~\eqref{eq:leftmost-entry},
we have $\h\left(u^{\left(c_{n+1}\right)}\right)=\h\left(u^{\left(c_{n}\right)}\right)f_{c_{n}}$.
The existence of the sequence~\eqref{eq:infinite-sequence} contradicts
the fact that $S$ is right-noetherian. We conclude that the $2$-polygraph
$\underline{\Gar}_{2}\left(S\right)$ is terminating.
\end{proof}

\subsection{\label{subsec:main}Homotopical completion of Garside's presentation}

In this subsection, we enrich Garside's presentation to reach a coherent
convergent presentation. First (Proposition~\ref{prop:completion-assuming-termination})
we compute, purely formally, the homotopical completion of a terminating
presentation of a monoid satisfying certain conditions, but not presumed
to have a proper Garside family. Then we show, in Corollary~\ref{cor:coherent-convergent-presentation-garside-family},
that this provides a coherent convergent presentation of a left-cancellative
monoid containing no nontrivial invertible element, admitting right-mcms
and a right-noetherian Garside family containing $1$.
\begin{prop}
\label{prop:completion-assuming-termination}Assume that $M$ is a
left-cancellative monoid admitting right-mcms, and $S$ is a subfamily
of $M$ closed under right-mcm and right divisor. Assume that the
$2$-polygraph $\underline{\Gar}_{2}\left(S\right)$ is a terminating
presentation of $M$. Then $M$ admits, as a coherent convergent presentation,
the $\left(3,1\right)$-polygraph $\underline{\Gar}_{3}\left(S\right)$
which extends $\underline{\Gar}_{2}\left(S\right)$ with the following
twelve families of generating $3$-cells, indexed by all the possible
elements of $S\setminus\left\{ 1\right\} $:
\[
\begin{tikzcd}[cramped, row sep=small, column sep=small, matrix scale=0.9, transform shape, nodes={scale=0.9}] & uv|w \arrow[Rightarrow, rd, bend left=10, "\alpha_{uv,w}"] \arrow[dd, phantom, "A_{u,v,w}"] & \\ u|v|w \arrow[Rightarrow, ru, bend left=10, "\alpha_{u,v}| w"] \arrow[Rightarrow, rd, bend right=10, "u|\alpha_{v,w}"'] & & uvw \\ & u|vw \arrow[Rightarrow, ru, bend right=10, "\alpha_{u,vw}"'] \end{tikzcd}
\qquad\qquad\qquad\qquad\qquad
\begin{tikzcd}[cramped, row sep=small, column sep=small, matrix scale=0.9, transform shape, nodes={scale=0.9}] u|v|w \arrow[Rightarrow, rr, bend left=10, "\alpha_{u,v}| w"{name=U}] \arrow[Rightarrow, rdd, bend right=10, "u|\alpha_{v,w}"'] & & uv|w \\ \\ & u|vw \arrow[from=U, phantom, "B_{u,v,w}"] \arrow[Rightarrow, ruu, bend right=10, "\beta{u,v,w}"'] \end{tikzcd}
\]

\[
\begin{tikzcd}[cramped, row sep=small, column sep=small, matrix scale=0.9, transform shape, nodes={scale=0.9}] & uv|wx \arrow[Rightarrow, rd, bend left=10, "\beta_{uv,w,x}"] \arrow[dd, phantom, "C_{u,v,w,x}"] & \\ u|v|wx \arrow[Rightarrow, ru, bend left=10, "\alpha_{u,v}|wx"] \arrow[Rightarrow, rd, bend right=10, "u|\beta_{v,w,x}"'] & & uvw|x \\ & u|vw|x \arrow[Rightarrow, ru, bend right=10, "\alpha_{u,vw}| x"'] \end{tikzcd}
\qquad\qquad
\begin{tikzcd}[cramped, row sep=small, column sep=small, matrix scale=0.9, transform shape, nodes={scale=0.9}] u|v|wx \arrow[Rightarrow, rrrr, bend left=10, "\alpha_{u,v}| wx"{name=U}] \arrow[Rightarrow, rdd, bend right=10, "u|\beta_{v,w,x}"'] & & & & uv|wx \\ \\ & u|vw|x \arrow[Rightarrow, rr, "\beta_{u,v,w}| x"'{name=D}] & & uv|w|x \arrow[from=U, to=D, phantom, "D_{u,v,w,x}"] \arrow[Rightarrow, ruu, bend right=10, "uv|\alpha_{w,x}"'] \end{tikzcd}
\]

\[
\begin{tikzcd}[cramped, row sep=small, column sep=small, matrix scale=0.9, transform shape, nodes={scale=0.9}] & uv|w|x \arrow[Rightarrow, rd, bend left=10, "uv|\alpha_{w,x}"] \arrow[dd, phantom, "E_{u,v,w,x}"] & \\ u|vw|x \arrow[Rightarrow, ru, bend left=10, "\beta_{u,v,w}| x"] \arrow[Rightarrow, rd, bend right=10, "u|\alpha_{vw,x}"'] & & uv|wx \\ & u|vwx \arrow[Rightarrow, ru, bend right=10, "\beta_{u,v,wx}"'] \end{tikzcd}
\qquad
\begin{tikzcd}[cramped, row sep=small, column sep=small, matrix scale=0.9, transform shape, nodes={scale=0.9}] & uv|w|x \arrow[Rightarrow, rr, "uv|\alpha_{w,x}"{name=U}] & & uv|wx \arrow[Rightarrow, rd, bend left=10, "\alpha_{uv,wx}"] \\ u|vw|x \arrow[Rightarrow, ru, bend left=10, "\beta_{u,v,w}| x"] \arrow[Rightarrow, rrd, bend right=10, "u|\alpha_{vw,x}"'] & & & & uvwx \\ & & u|vwx \arrow[from=U, phantom, "E'_{u,v,w,x}"] \arrow[Rightarrow, rru, bend right=10, "\alpha_{u,vwx}"'] \end{tikzcd}
\]

\[
\begin{tikzcd}[cramped, row sep=small, column sep=small, matrix scale=0.9, transform shape, nodes={scale=0.9}] & & uv|w|xy \arrow[to=D, phantom, "F_{u,v,w,x,y}"] \arrow[Rightarrow, rrd, bend left=10, "uv|\alpha_{w,xy}"] \\ u|vw|xy \arrow[Rightarrow, rru, bend left=10, "\beta_{u,v,w}| xy"] \arrow[Rightarrow, rd, bend right=10, "u|\beta_{vw,x,y}"'] & & & & uv|wxy \\ & u|vwx|y \arrow[Rightarrow, rr, "\beta_{u,v,wx}| y"'{name=D}] & & uv|wx|y \arrow[Rightarrow, ru, bend right=10, "uv|\alpha_{wx,y}"'] \end{tikzcd}
\]

\[
\begin{tikzcd}[cramped, row sep=small, column sep=small, matrix scale=0.9, transform shape, nodes={scale=0.9}] & uv|w|xy \arrow[Rightarrow, rr, "uv|\alpha_{w,xy}"{name=U}] & & uv|wxy \arrow[Rightarrow, rd, bend left=10, "\beta_{uv,wx,y}"] \\ u|vw|xy \arrow[Rightarrow, ru, bend left=10, "\beta_{u,v,w}| xy"] \arrow[Rightarrow, rrd, bend right=10, "u|\beta_{vw,x,y}"'] & & & & uvwx|y \\ & & u|vwx|y \arrow[from=U, phantom, "F'_{u,v,w,x,y}"] \arrow[Rightarrow, rru, bend right=10, "\alpha_{u,vwx}| y"'] \end{tikzcd}
\]

\[
\begin{tikzcd}[cramped, row sep=small, column sep=tiny, matrix scale=0.9, transform shape, nodes={scale=0.9}] & uv|w|xy \arrow[Rightarrow, rd, bend left=10, "uv|\beta_{w,x,y}"] \arrow[dd, phantom, "G_{u,v,w,x,y}"] & \\ u|vw|xy \arrow[Rightarrow, ru, bend left=10, "\beta_{u,v,w}| xy"] \arrow[Rightarrow, rd, bend right=10, "u|\beta_{vw,x,y}"'] & & uv|wx|y \\ & u|vwx|y \arrow[Rightarrow, ru, bend right=10, "\beta_{u,v,wx}| y"'] \end{tikzcd}
\qquad
\begin{tikzcd}[cramped, row sep=small, column sep=small, matrix scale=0.9, transform shape, nodes={scale=0.9}] & uv|w|xy \arrow[Rightarrow, rr, "uv|\beta_{w,x,y}"{name=U}] & & uv|wx|y \arrow[Rightarrow, rd, bend left=10, "\alpha_{uv,wx}| y"] \\ u|vw|xy \arrow[Rightarrow, ru, bend left=10, "\beta_{u,v,w}| xy"] \arrow[Rightarrow, rrd, bend right=10, "u|\beta_{vw,x,y}"'] & & & & uvwx|y \\ & & u|vwx|y \arrow[from=U, phantom, "G'_{u,v,w,x,y}"] \arrow[Rightarrow, rru, bend right=10, "\alpha_{u,vwx}| y"'] \end{tikzcd}
\]

\[
\begin{tikzcd}[cramped, row sep=small, column sep=tiny, matrix scale=0.9, transform shape, nodes={scale=0.9}] & uv|xy \arrow[to=D, phantom, "H_{u,v,x,y}"] \arrow[Rightarrow, rdd, bend left=10, "\beta_{uv,x,y}"] & \\ \\ u|vxy \arrow[Rightarrow, ruu, bend left=10, "\beta_{u,v,xy}"] \arrow[Rightarrow, rr, bend right=10, "\beta_{u,vx,y}"'{name=D}] & & uvx|y \end{tikzcd}
\qquad\qquad\qquad\qquad
\begin{tikzcd}[cramped, row sep=tiny, column sep=tiny, matrix scale=0.9, transform shape, nodes={scale=0.9}] & uv_1|w_1=uv_1|x_1y \arrow[Rightarrow, rd, bend left=10, "\beta_{uv_{1},x_{1},y}"] \arrow[dd, phantom, "I_{u,v_{1},w_{1},v_{2},w_{2}}"] & \\ \begin{array}{c} u|v_{1}w_{1}\\ =\\ u|v_{2}w_{2} \end{array} \arrow[Rightarrow, ru, bend left=10, "\beta_{u,v_{1},w_{1}}"] \arrow[Rightarrow, rd, bend right=10, "\beta_{u,v_{2},w_{2}}"'] & & \begin{array}{c} uv_{1}x_{1}|y\\ =\\ uv_{2}x_{2}|y \end{array} \\ & uv_2|w_2=uv_2|x_2y \arrow[Rightarrow, ru, bend right=10, "\beta_{uv_{2},x_{2},y}"'] \end{tikzcd}
\]
The meanings of the $1$-cells (i.e. words) $x_{1}$, $x_{2}$, $y$
and $x$, $y$ which appear respectively in the definitions of the
generating $3$-cells $I$ and $H$, are as follows. Since $v_{1}$
and $v_{2}$ have the common right multiple $v_{1}w_{1}=v_{2}w_{2}$,
they also have a right-mcm. The words $x_{1}$ and $x_{2}$ are the
right complements of $v_{1}$ and $v_{2}$, respectively, in their
right-mcm. The word $y$ is the right complement of $v_{1}x_{1}=v_{2}x_{2}$
in $v_{1}w_{1}=v_{2}w_{2}$. If either $x_{1}$ or $x_{2}$ is equal
to $1$, then the other one is simply denoted by $x$ (in the generating
$3$-cell $H$).

\medskip{}
\end{prop}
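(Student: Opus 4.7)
The plan is to apply Squier's theorem (Theorem~\ref{thm:squier}): since $\underline{\Gar}_{2}(S)$ is terminating by hypothesis, Newman's lemma combined with the critical branchings theorem (Theorem~\ref{thm:newman-critical}) reduces confluence to confluence of each critical branching, and any family of generating confluences then lifts to a coherent convergent presentation. So the body of the proof is an enumeration of the critical branchings together with a confluence diagram for each; the twelve families $A,\dots,I$ will be exactly these diagrams.

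Both generating $2$-cells of $\underline{\Gar}_{2}(S)$ have sources of $S$-length two, so critical branchings split by overlap length. At overlap length two (shared source word $u|d$), two $\alpha$'s give the trivial branching; an $\alpha$--$\beta$ overlap is impossible because $\alpha_{u,d}$ requires $ud\in S$ while $\beta_{u,b,c}$ with $bc=d$ forbids exactly the same product; and the only nontrivial case is two $\beta$'s $\beta_{u,v_{1},w_{1}}$ and $\beta_{u,v_{2},w_{2}}$ with $v_{1}w_{1}=v_{2}w_{2}$ in $M$. At overlap length three (shared middle letter), the four rule-type combinations $\alpha$--$\alpha$, $\alpha$--$\beta$, $\beta$--$\alpha$, $\beta$--$\beta$ produce respective sources $u|v|w$, $u|v|wx$, $u|vw|x$, $u|vw|xy$.

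For each length-three overlap, confluence splits further according to membership of certain products in $S$. The $\alpha$--$\alpha$ case gives $A$ when $uvw\in S$ and $B$ (using a $\beta$-step to close) when $uvw\notin S$; the $\alpha$--$\beta$ case gives $C$ or $D$ according to whether $uvw\in S$; the $\beta$--$\alpha$ case gives $E'$ or $E$ according to whether $uvwx\in S$; and the $\beta$--$\beta$ case is indexed by the pair of independent conditions ``$wxy\in S$?'' and ``$uvwx\in S$?'', yielding the four cells $F, F', G, G'$. In each sub-case, closure of $S$ under right divisor ensures that the intermediate words feature only letters of $S$, so that the rewriting rules needed to complete the square are available; writing down the two rewriting paths and reading off their common target produces the stated diagram. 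For the length-two $\beta$--$\beta$ case, the common right multiple $v_{1}w_{1}=v_{2}w_{2}$ means $v_{1}$ and $v_{2}$ admit a right-mcm in $M$; Lemma~\ref{lem:left-multiple-of-right-mcm} allows us to transport this right-mcm along multiplication by $u$ on the left, and closure of $S$ under right-mcm places both the right-mcm and (via closure under right divisor) the explicit words $x_{1}, x_{2}, y\in S$ that one needs to close the branching, yielding $I$ in general and the degenerate $H$ when one of $x_{1}, x_{2}$ equals $1$.

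The main obstacle lies in the length-two $\beta$--$\beta$ case: the equality $v_{1}w_{1}=v_{2}w_{2}$ in $M$ only gives an abstract common right multiple, and producing the explicit $S$-words $x_{1}, x_{2}, y$ needed to close the branching requires combining the right-mcm hypothesis on $M$ with the closure conditions on $S$ to conclude that the intermediate product $uv_{1}x_{1}=uv_{2}x_{2}$ lies in $S$ (so that the finishing $\beta$-step is defined). The length-three sub-case analysis is combinatorially heavy but routine once the cases are identified; the systematic use of right-divisor closure of $S$ is what keeps the various case distinctions well-defined, and the exhaustion of cases on membership of $uvw$, $uvwx$, and $wxy$ in $S$ is what guarantees that the twelve listed families cover every critical branching.
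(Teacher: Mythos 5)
Your proposal matches the paper's proof essentially step for step: the same split of critical branchings into equal-source pairs versus one-letter overlaps, the same observation that an equal-source $\alpha$--$\beta$ pair is impossible and that the sources $u|v|w$, $u|v|wx$, $u|vw|x$, $u|vw|xy$ exhaust the overlap cases, the same indexing of the twelve families by membership of $uvw$, $uvwx$ and $wxy$ in $S$, and the same key move in the equal-source $\beta$--$\beta$ case, namely Lemma~\ref{lem:left-multiple-of-right-mcm} plus closure under right-mcm to conclude $uv_{1}x_{1}=uv'\in S$ so that the closing $\beta$-steps of $H$ and $I$ are defined. The only details the paper spells out that you gloss over are the checks that $y\neq1$ (proved by the very same mcm-transport argument: $y=1$ would give $uv'\in S$, contradicting $uv_{1}w_{1}\notin S$) and that $x_{1}=x_{2}=1$ is excluded as a trivial branching --- routine verifications within your stated toolkit, not a gap in the approach.
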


The structure of the following proof closely resembles that of the
proof of \cite[Proposition 3.2.1]{GGM}, but we need to devise more
general arguments to assure favourable properties in a more general
context.
\begin{proof}
Termination of the $2$-polygraph $\underline{\Gar}_{2}\left(S\right)$
is assumed, so we can perform a relaxed version of the Knuth-Bendix
completion procedure, as described in Remark \ref{rem:alternative-knuth-bendix},
simultaneously with the Squier completion procedure. It will turn
out that all critical branchings are confluent, and hence that only
a Squier completion will be actually computed, i.e. no further $2$-generating
cells will be added.

Let us first consider  critical branchings consisting only of the
generating $2$-cells $\alpha$. There is only one such critical branching
for all $u$, $v$ and $w$ of $S\setminus\left\{ 1\right\} $ such
that \typetrois{?} holds:

\[
\begin{tikzcd} uv|w & u|v|w \arrow[Rightarrow, l, "\alpha_{u,v}| w"'] \arrow[Rightarrow, r, "u|\alpha_{v,w}"] & u|vw. \end{tikzcd} 
\]If the subcase \typetrois{1} holds, the branching is already confluent,
so the homotopical completion procedure adjoins only the generating
$3$-cell $A_{u,v,w}$. If \typetrois{0} holds, the branching is
again confluent, so the generating $3$-cell $B_{u,v,w}$ is adjoined.

Let us now consider critical branchings containing the generating
$2$-cell $\beta$. The sources of $2$-cells forming such a branching
can either overlap on one element of $S\setminus\left\{ 1\right\} $
or be equal, as the lengths in $\left(S\setminus\left\{ 1\right\} \right)^{\ast}$
of the sources of the generating $2$-cells $\alpha$ and $\beta$
equal two. We consider the two cases accordingly.

For the first case, the proof of \cite[Proposition 3.2.1]{GGM} applies
here to a great extent. The source of a branching has length three,
as a word in $\left(W\setminus\left\{ 1\right\} \right)^{\ast}$.
One of the $2$-cells which form a branching, rewrites the leftmost
two generating $1$-cells of the source, and the other one rewrites
the rightmost two. There are three distinct forms of such branchings:\[\begin{gathered}
\begin{tikzcd}[column sep=large] uv|wx & u|v|wx \arrow[Rightarrow, l, "\alpha_{u,v}| wx"'] \arrow[Rightarrow, r, "u|\beta_{v,w,x}"] & u|vw|x, \end{tikzcd}
\\
\begin{tikzcd}[column sep=large] uv|w|x & u|vw|x \arrow[Rightarrow, l, "\beta_{u,v,w}| x"'] \arrow[Rightarrow, r, "u|\alpha_{vw,x}"] & u|vwx, \end{tikzcd}
\\
\begin{tikzcd}[column sep=large] uv|w|xy & u|vw|xy \arrow[Rightarrow, l, "\beta_{u,v,w}| xy"'] \arrow[Rightarrow, r, "u|\beta_{vw,x,y}"] & u|vwx|y. \end{tikzcd}
\end{gathered}
\]The first form is defined under the condition \typequatre{?}{0}{?},
which splits into two mutually exclusive possibilities \typequatre{1}{0}{?}
and \typequatre{0}{0}{?}, which respectively yield the generating
$3$-cells $C_{u,v,w,x}$ and $D_{u,v,w,x}$ by the homotopical completion
procedure. The second form is defined under the condition \typequatre{0}{1}{?}
which splits into \typequatre{0}{1}{0} and \typequatre{0}{1}{1},
which respectively produce the generating $3$-cells $E_{u,v,w,x}$
and $E'_{u,v,w,x}$. The third form is defined under the conditions
\typecinq{0}{1}{?}{?}{0}{?}. This situation splits into two mutually
exclusive possibilities \typecinq{0}{1}{1}{?}{0}{?} and \typecinq{0}{1}{0}{?}{0}{?}.
The former possibility further splits into \typecinq{0}{1}{1}{0}{0}{0}
and \typecinq{0}{1}{1}{1}{0}{0} which respectively yield the generating
$3$-cells $F_{u,v,w,x,y}$ and $F'_{u,v,w,x,y}$; the latter splits
into \typecinq{0}{1}{0}{0}{0}{0} and \typecinq{0}{1}{0}{1}{0}{0}
yielding the generating $3$-cells $G_{u,v,w,x,y}$ and $G'_{u,v,w,x,y}$,
respectively.

We have thus considered the first case. The second case is going to
be considered in greater detail because this is where new justifications
are needed. Assume that the two $2$-cells which generate a critical
branching, have the same source. One of those two $2$-cells has to
be a $\beta$ (otherwise, the branching is trivial). Therefore, the
source has to have a form $u|v_{1}w_{1}$ satisfying the condition
\typetroisbase{$u$}{$v_{1}$}{$w_{1}$}{0}. Since $2$-cells $\alpha$
are not defined under this condition, the other $2$-cell also has
to be a $\beta$. The only way for the generating $2$-cells $\beta$
with the same source $u|v_{1}w_{1}$ to form a critical branching
is for $v_{1}w_{1}$ to have another decomposition $v_{1}w_{1}=v_{2}w_{2}$
such that \typetroisbase{$u$}{$v_{2}$}{$w_{2}$}{0}. Then the branching
is as follows:\[
\begin{tikzcd}[column sep=large] uv_{1}|w_{1} & u|v_{1}w_{1}=u|v_{2}w_{2} \arrow[Rightarrow, l, "\beta_{u,v_{1},w_{1}}"'] \arrow[Rightarrow, r, "\beta_{u,v_{2},w_{2}}"] & uv_{2}|w_{2}. \end{tikzcd}
\]

Let us invoke the assumed property of $M$ admitting right-mcms.
Since $v_{1}$ and $v_{2}$ have a common right multiple, namely $v_{1}w_{1}=v_{2}w_{2}$,
they also have a right-mcm, say $v'$. Since $S$ is closed under
right-mcm by assumption, $v'$ lies in $S$. By the left cancellation
property which grants the uniqueness of right complements, we define
$x_{1}$ and $x_{2}$ as the right complements in $v'$ of $v_{1}$
and $v_{2}$, respectively. Since $S$ is closed under right divisor,
$x_{1}$ and $x_{2}$ are elements of $S$. We also define $y$ as
the right complement of $v'$ in $v_{1}w_{1}=v_{2}w_{2}$. Note that
$y$ is in $S$ as a right divisor of $v_{1}w_{1}$ which is in $S$.
Uniqueness of the right complements of $v_{1}$ and $v_{2}$ in $v_{1}w_{1}$
and $v_{2}w_{2}$, respectively, yields $w_{1}=x_{1}y$ and $w_{2}=x_{2}y$.
To sum up, the diagram \[
\begin{tikzcd}[row sep=normal, column sep=normal] & \bullet \arrow[rrd, shorten <=-2pt,  shorten >=-2pt, bend left=10, "w_{1}"] \arrow[rd, shorten <=-2pt,  shorten >=-2pt, "x_{1}" description] & & \\ \bullet \arrow[ru, shorten <=-2pt,  shorten >=-2pt, bend left=10, "v_{1}"] \arrow[rr, shorten <=-2pt,  shorten >=-2pt, "v'" description] \arrow[rd, shorten <=-2pt,  shorten >=-2pt, bend right=10, "v_{2}"'] & & \bullet \arrow[r, shorten <=-2pt,  shorten >=-2pt, "y" description] & \bullet\\ & \bullet \arrow[ru, shorten <=-2pt,  shorten >=-2pt, "x_{2}" description] \arrow[rru, shorten <=-2pt,  shorten >=-2pt, bend right=10, "w_{2}"'] \end{tikzcd} 
\]commutes, where $\bullet$ denotes the unique $0$-cell. Furthermore,
the equality $w_{k}=x_{k}y$, the fact that $v'$ lies in $S$, and
the condition \typedeuxbase{$v_{k}$}{$w_{k}$}{1} together imply $\typetroisbase{\ensuremath{v_{k}}}{\ensuremath{x_{k}}}{\ensuremath{y}}{1}$
for $k\in\left\{ 1,2\right\} $.

We have only showed that $x_{1}$, $x_{2}$ and $y$ are elements
of $S$. Let us verify that all the generating $1$-cells involved
are, indeed, elements of $S\setminus\left\{ 1\right\} $. First we
demonstrate that $y$ cannot be equal to $1$. Assume the opposite.
Then the condition $\typetroisbase{\ensuremath{u}}{\ensuremath{v_{1}}}{\ensuremath{w_{1}}}{0}$
reduces to $\typetroisbase{\ensuremath{u}}{\ensuremath{v_{1}}}{\ensuremath{x_{1}}}{0}$.
On the other hand, $uv'$ is a right-mcm of $uv_{1}$ and $uv_{2}$
by Lemma \ref{lem:left-multiple-of-right-mcm}. Since $S$ is closed
under right-mcm, $uv'$ lies in $S$, which contradicts the condition
$\typetroisbase{\ensuremath{u}}{\ensuremath{v_{1}}}{\ensuremath{x_{1}}}{0}$.
Thus, we deduce that $y$ is not equal to $1$.

Note that if $x_{1}$ and $x_{2}$ were both equal to $1$, the branching
$\left\{ \beta_{u,v_{1},w_{1}},\beta_{u,v_{2},w_{2}}\right\} $ would
be trivial. So, at most one of the $1$-cells $x_{1}$ and $x_{2}$
can be equal to $1$. If $x_{2}=1$, the generating $3$-cell $H_{u,v,x,y}$
is constructed with $v\coloneqq v_{1}$ and $x\coloneqq x_{1}$. Similarly,
for $x_{1}=1$, the generating $3$-cell $H_{u,v,x,y}$ is constructed
with $v\coloneqq v_{2}$ and $x\coloneqq x_{2}$. Finally, if neither
$x_{1}$ nor $x_{2}$ is equal to $1$, the generating $3$-cell $I_{u,v_{1},w_{1},v_{2},w_{2}}$
is adjoined.

By Theorem \ref{thm:homotopical-completion}, the constructed $\left(3,1\right)$-polygraph
$\underline{\Gar}_{3}\left(S\right)$ is a coherent convergent presentation
of $M$.
\end{proof}
\begin{rem}
\uline{\label{rem:new-3-cells}}Observe that Proposition~\ref{prop:completion-assuming-termination}
gives three new families of generating $3$-cells (namely, $E'$,
$F'$ and $G'$) that were not a part of the \cite[Proposition 3.2.1]{GGM},
an analogous result for Artin-Tits monoids. The reason for this is
that the Garside families considered in \cite{GGM} for Artin-Tits
monoids and Garside monoids are closed under left and right divisors,
while a family $S$ in Proposition~\ref{prop:completion-assuming-termination}
is only closed under right divisor (like a Garside family in general).
Consequently, certain conjunctions of conditions, discussed in the
proof of Proposition~\ref{prop:completion-assuming-termination},
could not be satisfied in the setting of Artin-Tits monoids. For instance,
here we consider the possibility $uvwx\in S$ under the condition
$uvw\notin S$, among others, to construct the generating $3$-cell
$E'$. In an Artin-Tits monoid, on the other hand, $uvwx\in\sigma\left(W\right)$
would imply $uvw\in\sigma\left(W\right)$ due to closure under left
divisor.
\end{rem}
We can now deduce that the $2$-polygraph $\underline{\Gar}_{2}\left(S\right)$
is a Knuth-Bendix completion of the Garside's presentation $\Gar_{2}\left(S\right)$,
as hinted in Subsection~\ref{subsec:termination}.
\begin{cor}
\label{cor:convergent-presentation-garside-family}Assume that $M$
is a left-cancellative monoid containing no nontrivial invertible
element, and admitting a right-noetherian Garside family $S$ containing
$1$. Then the $2$-polygraph $\underline{\Gar}_{2}\left(S\right)$
is a convergent presentation of $M$.
\end{cor}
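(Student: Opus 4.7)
The plan is to apply Proposition~\ref{prop:completion-assuming-termination}, whose conclusion includes convergence of $\underline{\Gar}_{2}\left(S\right)$. Accordingly, I aim to verify its hypotheses. Termination of $\underline{\Gar}_{2}\left(S\right)$ is Proposition~\ref{prop:termination}. By Proposition~\ref{prop:dg-6.3.1}, $\Gar_{2}\left(S\right)$ is a presentation of $M$; since each additional generating $2$-cell $\beta_{u,v,w}:u|vw\Rightarrow uv|w$ of $\underline{\Gar}_{2}\left(S\right)$ relates two words both evaluating to $uvw$ in $M$, $\underline{\Gar}_{2}\left(S\right)$ is again a presentation of $M$. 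Since $S$ is a Garside family in $M$ and $M$ has no nontrivial invertible element, Proposition~\ref{prop:garside-implies-closed-comultiple} shows that $S$ is closed under right divisor and under right comultiple.

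The remaining hypothesis to secure is that $M$ admits right-mcms; once this is in place, Lemma~\ref{lem:closed-comultiple-iff-closed-mcm} upgrades closure under right comultiple to closure under right-mcm, giving the last assumption of Proposition~\ref{prop:completion-assuming-termination}. This is the main obstacle, as neither $M$-level noetherianity nor existence of right-lcms is postulated, so the existence of right-mcms has to be extracted from right-noetherianity of $S$ alone. My plan is to reduce to pairs in $S$ using $S$-normal decompositions, and then, among common right multiples in $S$ that left-divide a given common right multiple $h$ (this set is non-empty by closure under right comultiple), to select a $\preceq$-minimal element $v'$, whose existence is guaranteed by right-noetherianity of $S$ (a descending chain in $S$ of proper left divisors of $h$ must stabilise). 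A minimality argument invoking closure under right comultiple once more---to push any hypothetical smaller common right multiple in $M$ into $S$ strictly below $v'$---then confirms that $v'$ is a right-mcm in $M$.

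With all hypotheses of Proposition~\ref{prop:completion-assuming-termination} verified, I conclude that $\underline{\Gar}_{3}\left(S\right)$ is a coherent convergent presentation of $M$, whence the underlying $2$-polygraph $\underline{\Gar}_{2}\left(S\right)$ is a convergent presentation of $M$, as claimed.
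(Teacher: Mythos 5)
Your overall skeleton follows the paper's: termination comes from Proposition~\ref{prop:termination}, the presentation property from Proposition~\ref{prop:dg-6.3.1} together with the observation that each $\beta_{u,v,w}$ relates two words evaluating to $uvw$, and the closure of $S$ under right divisor and right comultiple from Proposition~\ref{prop:garside-implies-closed-comultiple}. You also correctly sense that the pressure point is the right-mcm condition. But your extraction of ``$M$ admits right-mcms'' from right-noetherianity of $S$ is a genuine gap: right-noetherianity, as defined in this paper, asserts the finiteness of \emph{increasing} sequences of left divisors of a fixed element with respect to proper left divisibility --- an ascending chain condition --- whereas your argument needs the opposite. To select a $\preceq$-minimal element of the (nonempty) set of common right multiples in $S$ left-dividing $h$, you need every \emph{descending} chain to terminate, i.e.\ well-foundedness of $\prec$, and this is the province of \emph{left}-noetherianity: under the complement correspondence $m\mapsto c$ with $mc=h$, descending chains of left divisors of $h$ correspond to increasing chains for proper right divisibility, whose finiteness is exactly left-noetherianity. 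This is why Corollary~\ref{cor:coherent-presentation-noetherian} obtains right-mcms from left-noetherianity via \cite[Proposition~II.2.40]{DDGKM}, and why Theorem~\ref{thm:coherent-presentation-garside-family} carries ``$M$ admits right-mcms'' as a separate hypothesis instead of deriving it from right-noetherianity of $S$. The same chain-condition reversal blocks even the weaker, $S$-relative existence statement that the $\beta$--$\beta$ equal-source branchings actually require; your closing minimality argument (pushing a hypothetical smaller common right multiple in $M$ into $S$ strictly below $v'$ via closure under right comultiple) is fine \emph{given} a minimal $v'$, but it is the existence of $v'$ that your hypotheses do not deliver. Your preliminary reduction of arbitrary pairs of $M$ to pairs in $S$ by $S$-normal decompositions is likewise asserted without justification and is not routine, though the chain-condition issue is already fatal.

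For comparison, the paper's proof never attempts to establish ``$M$ admits right-mcms'' as a standalone property of $M$. It derives only the $S$-level conditions --- closure under right divisor, and closure under right-mcm via Proposition~\ref{prop:garside-implies-closed-comultiple} and Lemma~\ref{lem:closed-comultiple-iff-closed-mcm} (note that the direction of that lemma actually used, from closure under right comultiple to closure under right-mcm, does not depend on the existence hypothesis) --- and then re-runs the \emph{proof} of Proposition~\ref{prop:completion-assuming-termination}, rather than invoking its statement as you do, to check that all critical branchings created by the generating $2$-cells $\beta$ are confluent. It concludes that $\underline{\Gar}_{2}\left(S\right)$ is a Knuth-Bendix completion of $\Gar_{2}\left(S\right)$, hence a convergent presentation of $M$, by Theorem~\ref{thm:knuth-bendix} and Remark~\ref{rem:alternative-knuth-bendix}.
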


\begin{proof}
Proposition~\ref{prop:dg-6.3.1} grants that the $2$-polygraph $\Gar_{2}\left(S\right)$
is a presentation of $M$. Since the generating $2$-cells $\alpha$
strictly decrease the $S$-length, the $2$-polygraph $\Gar_{2}\left(S\right)$
is terminating. Thanks to Proposition~\ref{prop:termination}, we
can compute its Knuth-Bendix completion in a manner described in Remark
\ref{rem:alternative-knuth-bendix}. As shown in Subsection~\ref{subsec:termination},
the generating $2$-cells $\beta$ are added.

Note that Proposition~\ref{prop:garside-implies-closed-comultiple}
and Lemma~\ref{lem:closed-comultiple-iff-closed-mcm}, together with
the assumptions that $S$ contains $1$ and that $M$ contains no
nontrivial invertible element, yield the property of $S$ being closed
under right-mcm. By Proposition~\ref{prop:garside-implies-closed-comultiple},
$S$ is closed under right divisor. With all these conditions satisfied,
the proof of Proposition~\ref{prop:completion-assuming-termination}
applies in a straightforward fashion. In particular, it shows that
all new critical branchings caused by the generating $2$-cells $\beta$
are confluent. Thus, the $2$-polygraph $\underline{\Gar}_{2}\left(S\right)$
is a Knuth-Bendix completion of the Garside's presentation $\Gar_{2}\left(S\right)$,
which yields the desired conclusion by Theorem~\ref{thm:knuth-bendix}
and Remark~\ref{rem:alternative-knuth-bendix}.
\end{proof}
Observe that Proposition~\ref{prop:termination}, together with Proposition
\ref{prop:dg-6.3.1}, immediately implies that the $2$-polygraph
$\underline{\Gar}_{2}\left(S\right)$ is a terminating presentation
of $M$. On the other hand, the fact that $\underline{\Gar}_{2}\left(S\right)$
is also a convergent presentation of $M$ was reachable only after
Proposition~\ref{prop:completion-assuming-termination} when we made
sure that no additional generating $2$-cells were required to obtain
confluence.
\begin{cor}
\label{cor:coherent-convergent-presentation-garside-family}Assume
that $M$ is a left-cancellative monoid containing no nontrivial invertible
element, and admitting a right-noetherian Garside family $S$ containing
$1$. If $M$ admits right-mcms, then $M$ admits the $\left(3,1\right)$-polygraph
$\underline{\Gar}_{3}\left(S\right)$, defined in Proposition~\ref{prop:completion-assuming-termination},
as a coherent convergent presentation.
\end{cor}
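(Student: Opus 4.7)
The plan is to deduce this corollary directly from Proposition~\ref{prop:completion-assuming-termination} by verifying that each of its hypotheses is a consequence of the present assumptions. Since that proposition delivers verbatim the desired conclusion, once the hypotheses are matched up the result is immediate.

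Two of the required hypotheses --- that $M$ is left-cancellative and that $M$ admits right-mcms --- are assumed outright, so nothing needs to be said for them. For the hypothesis that $S$ is closed under right-mcm and right divisor, I will invoke Proposition~\ref{prop:garside-implies-closed-comultiple}: since $S$ is a Garside family in the left-cancellative monoid $M$, which has no nontrivial invertible element, that proposition gives both that $S$ is closed under right divisor and that $S$ is closed under right comultiple. Combining the latter with Lemma~\ref{lem:closed-comultiple-iff-closed-mcm}, whose hypotheses (no nontrivial invertible elements, existence of right-mcms) are exactly in force here, upgrades closure under right comultiple to closure under right-mcm, as needed.

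For the remaining hypothesis, that $\underline{\Gar}_2(S)$ is a terminating presentation of $M$, I will simply appeal to Corollary~\ref{cor:convergent-presentation-garside-family}, proved just before the present statement, which asserts that $\underline{\Gar}_2(S)$ is in fact a \emph{convergent} presentation of $M$ --- and convergence entails termination. With all hypotheses of Proposition~\ref{prop:completion-assuming-termination} thus verified, that proposition is applicable and yields that $\underline{\Gar}_3(S)$ is a coherent convergent presentation of $M$, which is exactly what the corollary claims.

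There is no real obstacle: the substantive work has been done upstream, in Proposition~\ref{prop:termination} (termination), in Proposition~\ref{prop:completion-assuming-termination} (computation of the homotopical completion and confluence of all the critical branchings), and in the characterisations of Garside families that supply the closure properties of $S$. The proof of the corollary is therefore a matter of book-keeping: confirming that the hypotheses of the present statement feed correctly into the hypotheses of the cited results.
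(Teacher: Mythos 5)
Your proposal is correct and matches the paper's own proof: the paper likewise invokes Corollary~\ref{cor:convergent-presentation-garside-family} to obtain that $\underline{\Gar}_{2}\left(S\right)$ is a terminating presentation of $M$, and then applies Proposition~\ref{prop:completion-assuming-termination}, with the closure of $S$ under right-mcm and right divisor established exactly as you do, via Proposition~\ref{prop:garside-implies-closed-comultiple} and Lemma~\ref{lem:closed-comultiple-iff-closed-mcm} (the paper simply cites these verifications as already carried out in the proof of Corollary~\ref{cor:convergent-presentation-garside-family}, whereas you spell them out). No gaps.
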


\begin{proof}
Corollary~\ref{cor:convergent-presentation-garside-family} grants
that $\underline{\Gar}_{2}\left(S\right)$ is a terminating presentation
of $M$. As shown in the proof of Corollary~\ref{cor:convergent-presentation-garside-family},
all the requirements are met for applying Proposition~\ref{prop:completion-assuming-termination},
which completes the proof. 
\end{proof}

\subsection{\label{subsec:reduction-garside-family}Homotopical reduction of
Garside's presentation}

The homotopical reduction procedure from \cite[3.2.2]{GGM} applies
verbatim to the coherent convergent presentation provided by Proposition
\ref{prop:completion-assuming-termination} (and echoed by Corollary
\ref{cor:coherent-convergent-presentation-garside-family}), with
respect to a collapsible part $\Gamma$ obtained as follows. The component
$\Gamma_{4}$ of $\Gamma$ contains seven generating triple confluences
whose targets are the families $C$, ..., $I$ of generating $3$-cells,
with the order $I>H>\cdots>C$. For the sake of illustration, we recall
one such generating triple confluence in the case \typequatre{1}{1}{0}
(we refer the reader to \cite[3.2.2]{GGM} for the other six generating
triple confluences). Its boundary consists of the following two parts:
\[
\begin{tikzcd}[cramped, row sep=large, column sep=small, matrix scale=1, transform shape, nodes={scale=1}] & uv|w|x \arrow[Rightarrow, rr, bend left=10, "\alpha_{uv,w}|x"] \arrow[rd, phantom, "A_{u,v,w}|x"] & & uvw|x & \\ u|v|w|x \arrow[Rightarrow, ru, bend left=10, "\alpha_{u,v}|w|x"] \arrow[Rightarrow, rr, "u|\alpha_{v,w}|x" description] \arrow[Rightarrow, rd, bend right=10, "u|v|\alpha_{w,x}"'] & & u|vw|x \arrow[Rightarrow, ru, "\alpha_{u,vw}|x" description] \arrow[Rightarrow, rd, "u|\alpha_{vw,x}" description] \arrow[rr, phantom, "B_{u,vw,x}"] & & \null \\ & u|v|wx \arrow[ru, phantom, "u|A_{v,w,x}"] \arrow[Rightarrow, rr, bend right=10, "u|\alpha_{v,wx}"'] & & u|vwx \arrow[Rightarrow, uu, bend right=50, "\beta_{u,vw,x}"' description, near end] & \end{tikzcd} 
\]and \[
\begin{tikzcd}[cramped, row sep=large, column sep=small, matrix scale=1, transform shape, nodes={scale=1}] & uv|w|x \arrow[Rightarrow, rr, bend left=10, "\alpha_{uv,w}|x"{name=1-10}] \arrow[Rightarrow, rd, "uv|\alpha_{w,x}" description] & & uvw|x & \\ u|v|w|x \arrow[Rightarrow, ru, bend left=10, "\alpha_{u,v}|w|x"] \arrow[rr, phantom, "="] \arrow[Rightarrow, rd, bend right=10, "u|v|\alpha_{w,x}"'] & & uv|wx \arrow[Rightarrow, ru, "\beta_{uv,w,x}" description] \arrow[rr, phantom, "H_{u,v,w,x}"] \arrow[Leftarrow, rd, "\beta_{u,v,wx}" description] & & \null \\ & u|v|wx \arrow[Rightarrow, ru, "\alpha_{u,v}|wx" description] \arrow[Rightarrow, rr, bend right=10, "u|\alpha_{v,wx}"'{name=3-10}] & & u|vwx \arrow[Rightarrow, uu, bend right=50, "\beta_{u,vw,x}"' description, near end] \arrow[from=1-10, to=2-3, phantom, "B_{uv,w,x}" yshift=1.5ex] \arrow[from=3-10, to=2-3, phantom, "B_{u,v,wx}" yshift=-1.5ex] \end{tikzcd}.
\]The target of this particular generating triple confluence is the
generating $3$-cell $H_{u,v,w,x}$.

Note, however, that does not suffice to eliminate any of the generating
$3$-cells $E'_{u,v,w,x}$, $F'_{u,v,w,x,y}$ and $G'_{u,v,w,x,y}$
since these particular families of generating $3$-cells do not even
occur in \cite[Section 3]{GGM} (recall Remark~\ref{rem:new-3-cells}).
So, we have yet to eliminate these cells here. To this end, we consider
the following generating triple confluences in the $\left(3,1\right)$-polygraph
$\underline{\Gar}_{3}\left(S\right)$.

The boundary of our first $3$-sphere of interest consists of \[
\begin{tikzcd}[row sep=small, column sep=0em, matrix scale=0.5, transform shape, nodes={scale=1}] & & uv|w|x \arrow[equal, rrr, bend left=10] \arrow[rdd, phantom, "B|x"] & & & uv|w|x \arrow[Rightarrow, rd, bend left=10, "uv|\alpha"] & \\ & \phantom{aabab} \arrow[Rightarrow, ld, phantom] \arrow[Rightarrow, ru, phantom] & & & \phantom{aabab} \arrow[Rightarrow, ld, phantom] \arrow[Rightarrow, ru, phantom] & & uv|wx \arrow[Rightarrow, rd, bend left=10, "\alpha"] \\ u|v|w|x \arrow[Rightarrow, rruu, bend left=10, "\alpha|w|x"] \arrow[Rightarrow, rrr, "u|\alpha|x" description] \arrow[Rightarrow, rrdd, bend right=10, "u|v|\alpha"'] & & & u|vw|x \arrow[Rightarrow, rruu, "\beta|x" description] \arrow[Rightarrow, rrdd, "u|\alpha"] \arrow[rrrr, phantom, "E'"] & & & & uvwx \\ & & & & & & \phantom{aabab} \arrow[phantom, ru, bend right=5] \\ & & u|v|wx \arrow[ruu, phantom, "u|A"] \arrow[Rightarrow, rrr, bend right=10, "u|\alpha"'] & & & u|vwx \arrow[Rightarrow, rruu, bend right=5, "\alpha"'] \end{tikzcd} 
\]and\[
\begin{tikzcd}[row sep=small, column sep=0em, matrix scale=0.5, transform shape, nodes={scale=1}] & & uv|w|x \arrow[phantom, rrr, bend left=10] \arrow[Rightarrow, rrdd, "uv|\alpha" description] & & & \phantom{uv|w|x} \arrow[phantom, rrdd, bend left=10, ""] & \\ & \phantom{aabab} \arrow[Rightarrow, ld, phantom] \arrow[Rightarrow, ru, phantom] & & & & & \phantom{uv|wx} \\ u|v|w|x \arrow[Rightarrow, rruu, bend left=10, "\alpha|w|x"] \arrow[rrrr, phantom, "="] \arrow[Rightarrow, rrdd, bend right=10, "u|v|\alpha"'] & & & & uv|wx \arrow[ruu, phantom, ""] \arrow[Rightarrow, rrr, "\alpha"] \arrow[rdd, phantom, "A"] & & & uvwx \\ & & & \phantom{aabab} \arrow[Rightarrow, ld, phantom] \arrow[Rightarrow, ru, phantom] & & & \phantom{aabab} \arrow[ru, bend right=5, ""'] \\ & & u|v|wx \arrow[Rightarrow, rruu, "\alpha|wx" description] \arrow[Rightarrow, rrr, bend right=10, "u|\alpha"'] & & & u|vwx \arrow[Rightarrow, rruu, bend right=5, "\alpha"'] &  \end{tikzcd}.
\]The target is the generating $3$-cell $E'_{u,v,w,x}$.

The second generating triple confluence which we are going to use has
the boundary consisting of\[
\begin{tikzcd}[row sep=small, column sep=0em, matrix scale=0.5, transform shape, nodes={scale=1}] & & uv|w|xy \arrow[equal, rrr, bend left=10] \arrow[rdd, phantom, "B|xy"] & & & uv|w|xy \arrow[Rightarrow, rd, bend left=10, "uv|\alpha"] & \\ & \phantom{aabab} \arrow[Rightarrow, ld, phantom] \arrow[Rightarrow, ru, phantom] & & & \phantom{aabab} \arrow[Rightarrow, ld, phantom] \arrow[Rightarrow, ru, phantom] & & uv|wxy \arrow[Rightarrow, rd, bend left=10, "\beta"] \\ u|v|w|xy \arrow[Rightarrow, rruu, bend left=10, "\alpha|w|xy" description] \arrow[Rightarrow, rrr, "u|\alpha|xy"{name=U} description] \arrow[Rightarrow, rrdd, bend right=10, "u|v|\alpha"'] & & & u|vw|xy \arrow[dd, phantom, "u|H"] \arrow[Rightarrow, rruu, "\beta|xy" description] \arrow[Rightarrow, rrdd, "u|\beta" description] \arrow[rrrr, phantom, "F'"] & & & & uvwx|y \\ & & & & & & \phantom{aabab} \arrow[phantom, ru, bend right=5] \\ & & u|v|wxy \arrow[from=U, phantom, "u|B"] \arrow[ruu, Rightarrow, "u|\beta" description] \arrow[Rightarrow, rrr, bend right=10, "u|\beta"'] & \null & & u|vwx|y \arrow[Rightarrow, rruu, bend right=5, "\alpha|y"'] \end{tikzcd}
\]
and\[
\begin{tikzcd}[row sep=small, column sep=0em, matrix scale=0.5, transform shape, nodes={scale=1}] & & uv|w|xy \arrow[phantom, rrr, bend left=10] \arrow[Rightarrow, rrdd, "uv|\alpha" description] & & & \phantom{uv|w|x} \arrow[phantom, rrdd, bend left=10, ""] & \\ & \phantom{aabab} \arrow[Rightarrow, ld, phantom] \arrow[Rightarrow, ru, phantom] & & & & & \phantom{uv|wx} \\ u|v|w|xy \arrow[Rightarrow, rruu, bend left=10, "\alpha|w|xy"] \arrow[rrrr, phantom, "="] \arrow[Rightarrow, rrdd, bend right=10, "u|v|\alpha"'] & & & & uv|wxy \arrow[ruu, phantom, ""] \arrow[Rightarrow, rrr, "\beta"] \arrow[rdd, phantom, "C"] & & & uvwx|y \\ & & & \phantom{aabab} \arrow[Rightarrow, ld, phantom] \arrow[Rightarrow, ru, phantom] & & & \phantom{aabab} \arrow[ru, bend right=5, ""'] \\ & & u|v|wxy \arrow[Rightarrow, rruu, "\alpha|wxy" description] \arrow[Rightarrow, rrr, bend right=10, "u|\beta"'] & & & u|vwx|y \arrow[Rightarrow, rruu, bend right=5, "\alpha|y"'] &  \end{tikzcd}.
\]The target is the generating $3$-cell $F'_{u,v,w,x,y}$.

Finally, we construct the $3$-sphere whose boundary has the following
parts:\[
\begin{tikzcd}[row sep=small, column sep=0em, matrix scale=0.5, transform shape, nodes={scale=1}] & & uv|w|xy \arrow[equal, rrr, bend left=10] \arrow[rdd, phantom, "B|xy"] & & & uv|w|xy \arrow[Rightarrow, rd, bend left=10, "uv|\beta"] & \\ & \phantom{aabab} \arrow[Rightarrow, ld, phantom] \arrow[Rightarrow, ru, phantom] & & & \phantom{aabab} \arrow[Rightarrow, ld, phantom] \arrow[Rightarrow, ru, phantom] & & uv|wx|y \arrow[Rightarrow, rd, bend left=10, "\alpha|y"] \\ u|v|w|xy \arrow[Rightarrow, rruu, bend left=10, "\alpha|w|xy" description] \arrow[Rightarrow, rrr, "u|\alpha|xy" description] \arrow[Rightarrow, rrdd, bend right=10, "u|v|\beta"'] & & & u|vw|xy \arrow[Rightarrow, rruu, "\beta|xy" description] \arrow[Rightarrow, rrdd, "u|\beta" description] \arrow[rrrr, phantom, "G'"] & & & & uvwx|y \\ & & & & & & \phantom{aabab} \arrow[phantom, ru, bend right=5] \\ & & u|v|wx|y \arrow[ruu, phantom, "u|C" description] \arrow[Rightarrow, rrr, bend right=10, "u|\alpha|y"'] & & & u|vwx|y \arrow[Rightarrow, rruu, bend right=5, "\alpha|y"'] \end{tikzcd}
\]

and\[
\begin{tikzcd}[row sep=small, column sep=0em, matrix scale=0.5, transform shape, nodes={scale=1}] & & uv|w|xy \arrow[phantom, rrr, bend left=10] \arrow[Rightarrow, rrdd, "uv|\beta" description] & & & \phantom{uv|w|x} \arrow[phantom, rrdd, bend left=10, ""] & \\ & \phantom{aabab} \arrow[Rightarrow, ld, phantom] \arrow[Rightarrow, ru, phantom] & & & & & \phantom{uv|wx} \\ u|v|w|xy \arrow[Rightarrow, rruu, bend left=10, "\alpha|w|xy"] \arrow[rrrr, phantom, "="] \arrow[Rightarrow, rrdd, bend right=10, "u|v|\beta"'] & & & & uv|wx|y \arrow[ruu, phantom, ""] \arrow[Rightarrow, rrr, "\alpha|y"] \arrow[rdd, phantom, "A|y"] & & & uvwx|y \\ & & & \phantom{aabab} \arrow[Rightarrow, ld, phantom] \arrow[Rightarrow, ru, phantom] & & & \phantom{aabab} \arrow[ru, bend right=5, ""'] \\ & & u|v|wx|y \arrow[Rightarrow, rruu, "\alpha|wx|y" description] \arrow[Rightarrow, rrr, bend right=10, "u|\alpha|y"'] & & & u|vwx|y \arrow[Rightarrow, rruu, bend right=5, "\alpha|y"'] &  \end{tikzcd}.
\]The target is the generating $3$-cell $G'_{u,v,w,x,y}$.

So we extend the above mentioned component $\Gamma_{4}$ of the collapsible
part (inherited from \cite[3.2.2]{GGM}) with these three freshly
constructed $3$-spheres. We also extend the order relation on generating
$3$-cells to $G'>F'>E'>I>H>\cdots>C$. The component $\Gamma_{3}$
of the collapsible part contains the family $B$ of generating $3$-cells
having the generating $2$-cells $\beta$ as targets, with the order
$\beta>\alpha$.

The homotopical reduction of the resulting $\left(3,1\right)$-polygraph
of Proposition~\ref{prop:completion-assuming-termination}, with
respect to the collapsible part $\Gamma$, is precisely $\Gar_{3}\left(S\right)$.
By Theorem~\ref{thm:completion-reduction}, we conclude that $\Gar_{3}\left(S\right)$
is a coherent presentation of $M$. Through Corollary~\ref{cor:coherent-convergent-presentation-garside-family},
the proof of Theorem~\ref{thm:coherent-presentation-garside-family}
is hereby completed.

\subsection{Noetherianity}

Let us state an immediate corollary of Theorem~\ref{thm:coherent-presentation-garside-family},
having somewhat simpler (although more restrictive) requirements.
\begin{cor}
\label{cor:coherent-presentation-noetherian}Assume that $M$ is a
left-cancellative noetherian monoid containing no nontrivial invertible
element, and $S\subseteq M$ is a Garside family containing $1$.
Then $M$ admits the $\left(3,1\right)$-polygraph $\Gar_{3}\left(S\right)$
as a coherent presentation.
\end{cor}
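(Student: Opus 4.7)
The plan is to deduce this as an immediate corollary of Theorem~\ref{thm:coherent-presentation-garside-family} by verifying the two hypotheses of that theorem which are not directly assumed here: that the Garside family $S$ is right-noetherian, and that $M$ admits right-mcms.

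Right-noetherianity of $S$ is essentially free. Since $M$ is noetherian it is in particular right-noetherian, so Example~\ref{rem:ggm-noetherian} yields that every Garside family in $M$ — and $S$ in particular — is right-noetherian.

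The existence of right-mcms is the main step. Given $f, g \in M$ with a common right multiple $h$, let $X$ denote the set of common right multiples of $f$ and $g$ that are left divisors of $h$; any $\preceq$-minimal element of $X$ is by construction a right-mcm of $f$ and $g$ dividing $h$, so it suffices to show that $X$ has a minimal element. If not, one builds an infinite strictly descending chain $h \succ h_1 \succ h_2 \succ \cdots$ in $X$. Left-cancellativity gives unique right complements $h = h_i m_i$, and from $h_i = h_{i+1} k_i$ with $k_i \neq 1$ we obtain $m_{i+1} = k_i m_i$. Hence $(m_i)_{i \geq 1}$ is an infinite strictly ascending sequence of right divisors of $h$ with respect to proper right divisibility (understood in the non-invertible-complement sense used by the paper's noetherianity, which matches $k_i \neq 1$ since $M$ has no nontrivial invertible element), contradicting left-noetherianity of $M$. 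Once right-mcms are secured, Theorem~\ref{thm:coherent-presentation-garside-family} delivers $\Gar_{3}(S)$ as a coherent presentation of $M$. There is no real obstacle beyond keeping straight the two ``proper divisibility'' conventions the paper introduces, but in the absence of nontrivial invertibles these coincide for the argument above.
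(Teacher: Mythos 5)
Your proof is correct, and its skeleton matches the paper's: reduce to Theorem~\ref{thm:coherent-presentation-garside-family} by checking the two hypotheses not assumed in the corollary, namely that $S$ is right-noetherian (immediate from right-noetherianity of $M$, exactly as in Example~\ref{rem:ggm-noetherian}) and that $M$ admits right-mcms. The genuine divergence is in the second step: the paper simply cites \cite[Proposition~II.2.40]{DDGKM}, which asserts that every left-cancellative left-noetherian monoid admits right-mcms, whereas you re-prove this fact inline. Your argument is sound: for a common right multiple $h$ of $f$ and $g$, a $\preceq$-minimal element of the set $X$ of common right multiples of $f$ and $g$ left-dividing $h$ is indeed a right-mcm of which $h$ is a right multiple (any proper left divisor of it that were a common right multiple would itself lie in $X$), and if $X$ had no minimal element, a descending chain $h \succ h_{1} \succ h_{2} \succ \cdots$ in $X$ would, via the identity $m_{i+1} = k_{i} m_{i}$ obtained by left-cancelling $h_{i+1}$ in $h = h_{i+1} k_{i} m_{i} = h_{i+1} m_{i+1}$, yield an infinite chain of right divisors $m_{i}$ of $h$ increasing with respect to proper right divisibility, contradicting left-noetherianity of $M$. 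You are also right to flag the paper's two conventions for proper divisibility: since $M$ contains no nontrivial invertible element, $k_{i} \neq 1$ is equivalent to $k_{i}$ being non-invertible, so your chain is proper in the sense used in the paper's definition of noetherianity, and likewise $\preceq$ is a genuine partial order on $X$, so the minimal-element dichotomy makes sense. What the citation buys the paper is brevity and generality --- the cited proposition holds without the no-nontrivial-invertibles hypothesis, which your argument uses (harmlessly, since the corollary assumes it) --- while your version buys self-containedness at the cost of a few lines and an appeal to dependent choice in building the chain.
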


\begin{proof}
Since $M$ is right-noetherian, so is $S$. By~\cite[Proposition~II.2.40]{DDGKM},
every left-cancellative left-noetherian monoid admits right-mcms,
so $M$ admits right-mcms. Hence, all the conditions of Theorem~\ref{thm:coherent-presentation-garside-family}
are satisfied.
\end{proof}
The next section demonstrates advantages of using our results in
applications. The following example, however, shows that taking a
Garside family as a generating set is not always the most practical
way to get a coherent presentation. 
\begin{example}
We revisit the Klein bottle monoid $K^{+}$ from (Examples~\ref{exa:klein-bottle-completion} and~
\ref{exa:klein-bottle-reduction}).
One of the infinitely many Garside families in $K^{+}$, none of which
is finite (see~\cite[Example~IV.2.35]{DDGKM}), is the set of left
divisors of $a^{2}$, which we denote by $S$. Let us check if the
conditions of Theorem~\ref{thm:coherent-presentation-garside-family}
are satisfied. Note that $K^{+}$ is cancellative as it is embeddable
in a group. The presentation (\ref{eq:klein_presentation}) contains
no relation of the form $u=v$ with exactly one the words $u$ and
$v$ being empty, hence $K^{+}$ has no nontrivial invertible element. Note that the left divisibility relation of $K^{+}$ is a
linear order (\cite[Figure I.6]{DDGKM}), which is a lot more than
necessary for admitting conditional right-lcms (consequently, right-mcms,
too). However, the sequence $\left(ab^{n}\right)_{n=1}^{\infty}$
shows that $S$ is not right-noetherian. Even worse, $S$ contains
an infinite path of the generating $2$-cells $\beta$, as defined
in Proposition~\ref{prop:termination}:
\[
b|a^{2}\mapsto b^{2}|aba\mapsto b^{3}|ab^{2}a\mapsto\cdots\mapsto b^{q}|ab^{q-1}a\mapsto\cdots
\]
Even if we took another Garside family, we would not be successful,
as witnessed by~\cite[Example~IV.2.35]{DDGKM}. Therefore, neither
Theorem~\ref{thm:coherent-presentation-garside-family} nor its proof
is applicable to $K^{+}$.

If one found a way to use a Garside family as a generating set, they
would have an infinite number of $1$-cells. On the other hand, by
directly performing the homotopical completion-reduction procedure
in Examples~\ref{exa:klein-bottle-completion} and~\ref{exa:klein-bottle-reduction},
we have demonstrated that the presentation~\eqref{eq:klein_presentation},
which has two generating $1$-cells and one generating $2$-cell,
is coherent. Therefore, for this particular example, the direct application
of the homotopical completion-reduction procedure is a preferable
way of reaching a coherent presentation.
\end{example}

\section{\label{sec:examples}Applications of Theorem~\ref{thm:coherent-presentation-garside-family}}

In this section, we consider applications of Theorem~\ref{thm:coherent-presentation-garside-family}
to certain monoids. In Subsections~\ref{subsec:abelian-infinite}
and~\ref{subsec:infinite-braids}, we apply it to monoids which are
neither Artin-Tits nor Garside. In Subsection~\ref{subsec:non-spherical},
we compute a finite coherent presentation of an Artin-Tits monoid
$B^{+}\left(W\right)$ that is not of spherical type, with a finite
Garside family $F$ (hence, $F\neq W$).

\subsection{\label{subsec:abelian-infinite}The free abelian monoid over an infinite
basis}

Consider the free abelian monoid~$\mathbb{N}^{\left(I\right)}$ of
all $I$-indexed sequences of nonnegative integers with finite support.
Note that $\mathbb{N}^{\left(I\right)}$ is not necessarily of finite
type, hence it is neither Artin-Tits nor Garside. Define
\[
S_{I}=\left\{ g\in\mathbb{N}^{\left(I\right)}\,\middle\vert\,\text{\ensuremath{\forall}}k\in I,g\left(k\right)\in\left\{ 0,1\right\} \right\} .
\]
Observe that $S_{I}$ is a Garside family in $\mathbb{N}^{\left(I\right)}$
(say, by applying Proposition~\ref{prop:garside-family-iff}). The
following properties follow from the fact that the definition of the
product on $\mathbb{N}^{\left(I\right)}$ is based on the pointwise
addition of nonnegative integers: $\mathbb{N}^{\left(I\right)}$ is
a cancellative monoid, it has no nontrivial invertible elements, and
it admits conditional right-lcms. Since every element of $\mathbb{N}^{\left(I\right)}$
has only finitely many divisors, $\mathbb{N}^{\left(I\right)}$ is
noetherian. So, all the conditions of Theorem~\ref{thm:coherent-presentation-garside-family}
are satisfied.

Let us describe the cells of the coherent presentation of $\mathbb{N}^{\left(I\right)}$
granted by Theorem~\ref{thm:coherent-presentation-garside-family}.
The generating $2$-cells are relations $\alpha_{u,v}:u|v\Rightarrow uv$
for $u,v\in S_{I}\setminus\left\{ 1\right\} $ with $uv\in S_{I}$,
which in this particular context means that $u$ and $v$ have disjoint
supports. A generating $3$-cell $A_{u,v,w}$ is adjoined for any
$u,v,w\in S_{I}\setminus\left\{ 1\right\} $ which have pairwise disjoint
supports.

As expected, for $I=\left\{ 1,2,\ldots,n\right\} $, we recover Garside's
presentation of the Artin-Tits monoid~$\mathbb{N}^{n}$ recalled in
Example~\ref{exa:artin-tits-coherent}, as well as Garside's presentation
of the Garside monoid $\mathbb{N}^{n}$ recalled in Example~\ref{exa:garside-monoid-coherent}.

\subsection{\label{subsec:infinite-braids}Infinite braids}

Denote by $B_{\infty}^{+}$ the monoid of all positive braids on
infinitely many strands indexed by positive integers, as defined in
\cite[Subsection~I.3.1]{DDGKM}. It is shown that $B_{\infty}^{+}$
is not of finite type, therefore it is neither Artin-Tits nor Garside.
Put
\[
S_{\infty}=\bigcup_{n\geq1}\left\{ \textrm{the family of all divisors of \ensuremath{\Delta_n}}\right\} ,
\]
where $\Delta_{n}$ denotes the half-turn braid on~$n$ strands. In other words, $S_{\infty}$
consists of all simple braids for all $n\geq1$. This is made precise
in~\cite[Subsection~I.3.1]{DDGKM}. Basically, $B_{n}^{+}$ is identified
with its image in $B_{n+1}^{+}$ under the homomorphism induced by
the identity map on $\left\{ \sigma_{1},\ldots,\sigma_{n}\right\} $.
In that sense, $B_{\infty}^{+}$ is seen as the union of all braid
monoids $B_{n}^{+}$. By Proposition~\ref{prop:garside-family-iff},
$S_{\infty}$ is a Garside family in $B_{\infty}^{+}$. Cancellation
properties, and having no nontrivial invertible elements are preserved
from braid monoids because the respective definitions do not depend
on $n$. The monoid is noetherian for the same reason as Artin-Tits
monoids (Example~\ref{exa:noetherian-artin-tits-garside}). So, we
can apply Theorem~\ref{thm:coherent-presentation-garside-family}
to construct a coherent presentation.

The generating $2$-cells are relations $\alpha_{u,v}:u|v\Rightarrow uv$
for $u,v\in S_{\infty}\setminus\left\{ 1\right\} $ whenever $uv\in S_{\infty}$,
which in this example means that $uv$ is a simple braid. A generating
$3$-cell $A_{u,v,w}$ is adjoined for any $u,v,w\in S_{\infty}\setminus\left\{ 1\right\} $
with $uv\in S_{\infty}$, $vw\in S_{\infty},$ and  $uvw\in S_{\infty}$,
which here means that~$uv$, $vw$ and~$uvw$ are simple braids. So,
formally, each cell is constructed exactly like in the coherent presentation
provided by~\cite[Theorem~3.1.3]{GGM} for a (finite) braid monoid,
regarded as an Artin-Tits monoid, which comes as no surprise because
Theorem~\ref{thm:coherent-presentation-garside-family} is a formal
generalisation of~\cite[Theorem~3.1.3]{GGM}.

\subsection{\label{subsec:non-spherical}Artin-Tits monoids that are not of spherical
type}

For an Artin-Tits monoid $B^{+}\left(W\right)$ of spherical type,
\cite[Theorem~3.1.3]{GGM} provides a finite coherent presentation
having $W\setminus\left\{ 1\right\} $ as a generating set. On the
other hand, if a Coxeter group $W$ is infinite, \cite[Theorem~3.1.3]{GGM}
still provides a coherent presentation but an infinite one. Recall
that every Artin-Tits monoid admits a finite Garside family (we refer
the reader to~\cite{DDH} for elaboration), regardless of whether
the monoid is of spherical type or not. An advantage of having Theorem
\ref{thm:coherent-presentation-garside-family} at our disposal is
that we can take a finite Garside family for a generating set in computing
a coherent presentation (whereas with~\cite[Theorem~3.1.3]{GGM},
one has to take the corresponding Coxeter group).

Let us consider the Artin-Tits monoid of type $\widetilde{A}_{2}$,
i.e.\ the monoid presented by
\begin{equation}
\left\langle \sigma_{1},\sigma_{2},\sigma_{3}\vert\sigma_{1}\sigma_{2}\sigma_{1}=\sigma_{2}\sigma_{1}\sigma_{2},\sigma_{2}\sigma_{3}\sigma_{2}=\sigma_{3}\sigma_{2}\sigma_{3},\sigma_{3}\sigma_{1}\sigma_{3}=\sigma_{1}\sigma_{3}\sigma_{1}\right\rangle ^{+}.\label{eq:non_spherical}
\end{equation}
By~\cite[Table 1 and Proposition~5.1]{DDH}, the smallest Garside
family $F$ in this monoid consists of the sixteen right divisors
of the elements $\sigma_{3}\sigma_{1}\sigma_{2}\sigma_{1}$, $\sigma_{1}\sigma_{2}\sigma_{3}\sigma_{2}$,
and $\sigma_{2}\sigma_{3}\sigma_{1}\sigma_{3}$. Namely, 
\begin{multline*}
F=\{1,\sigma_{1},\sigma_{2},\sigma_{3},\sigma_{1}\sigma_{2},\sigma_{2}\sigma_{1},\sigma_{2}\sigma_{3},\sigma_{3}\sigma_{2},\sigma_{3}\sigma_{1},\sigma_{1}\sigma_{3},\\
\sigma_{1}\sigma_{2}\sigma_{1},\sigma_{2}\sigma_{3}\sigma_{2},\sigma_{3}\sigma_{1}\sigma_{3},\sigma_{3}\sigma_{1}\sigma_{2}\sigma_{1},\sigma_{1}\sigma_{2}\sigma_{3}\sigma_{2},\sigma_{2}\sigma_{3}\sigma_{1}\sigma_{3}\}.
\end{multline*}
The Cayley graph of $F$ can be seen in~\cite[Figure 1]{DDH}.

As noted in Remark~\ref{rem:unifying-generalisation}, all the conditions
of Theorem~\ref{thm:coherent-presentation-garside-family} are satisfied.
Following Theorem~\ref{thm:coherent-presentation-garside-family},
we construct a generating $2$-cell $u|v\Rightarrow uv$ for $u,v\in F\setminus\left\{ 1\right\} $
with $uv\in F$. Thus we obtain three pairs of generating $2$-cells
of the form
\begin{align*}
\alpha_{\sigma_{i},\sigma_{j}} & :\sigma_{i}|\sigma_{j}\Rightarrow\sigma_{i}\sigma_{j} & \alpha_{\sigma_{j},\sigma_{i}} & :\sigma_{j}|\sigma_{i}\Rightarrow\sigma_{j}\sigma_{i},
\end{align*}
three pairs of generating $2$-cells of the form
\begin{align*}
\alpha_{\sigma_{i},\sigma_{j}\sigma_{i}} & :\sigma_{i}|\sigma_{j}\sigma_{i}\Rightarrow\sigma_{i}\sigma_{j}\sigma_{i} & \alpha_{\sigma_{j},\sigma_{i}\sigma_{j}} & :\sigma_{j}|\sigma_{i}\sigma_{j}\Rightarrow\sigma_{i}\sigma_{j}\sigma_{i},
\end{align*}
three pairs of generating $2$-cells of the form
\begin{align*}
\alpha_{\sigma_{i}\sigma_{j},\sigma_{i}} & :\sigma_{i}\sigma_{j}|\sigma_{i}\Rightarrow\sigma_{i}\sigma_{j}\sigma_{i} & \alpha_{\sigma_{j}\sigma_{i},\sigma_{j}} & :\sigma_{j}\sigma_{i}|\sigma_{j}\Rightarrow\sigma_{i}\sigma_{j}\sigma_{i},
\end{align*}
three generating $2$-cells of the form
\[
\alpha_{\sigma_{k},\sigma_{i}\sigma_{j}\sigma_{i}}:\sigma_{k}|\sigma_{i}\sigma_{j}\sigma_{i}\Rightarrow\sigma_{k}\sigma_{i}\sigma_{j}\sigma_{i},
\]
and three pairs of generating $2$-cells of the form
\begin{align*}
\alpha_{\sigma_{k}\sigma_{i},\sigma_{j}\sigma_{i}} & :\sigma_{k}\sigma_{i}|\sigma_{j}\sigma_{i}\Rightarrow\sigma_{k}\sigma_{i}\sigma_{j}\sigma_{i} & \alpha_{\sigma_{k}\sigma_{j},\sigma_{i}\sigma_{j}} & :\sigma_{k}\sigma_{j}|\sigma_{i}\sigma_{j}\Rightarrow\sigma_{k}\sigma_{i}\sigma_{j}\sigma_{i},
\end{align*}
with $i,j,k\in\left\{ 1,2,3\right\} $ and $j=i+1$ and $k=j+1$ modulo
$3$.

\medskip{}
We proceed to construct the generating $3$-cells $A_{u,v,w}$
for $u,v,w\in F\setminus\left\{ 1\right\} $ with $uv\in F$, $vw\in F,$
and $uvw\in F$. We obtain pairs of generating $3$-cells of the
form\[
\begin{tikzcd}[cramped, row sep=scriptsize, column sep=scriptsize, matrix scale=0.9, transform shape, nodes={scale=0.9}]
& \sigma_{i}\sigma_{j}|\sigma_{i} \arrow[Rightarrow, rd, bend left=10, "\alpha_{\sigma_{i}\sigma_{j},\sigma_{i}}"] \arrow[dd, phantom, "A_{\sigma_{i},\sigma_{j},\sigma_{i}}"] & \\
\sigma_{i}|\sigma_{j}|\sigma_{i} \arrow[Rightarrow, ru, bend left=10, "\alpha_{\sigma_{i},\sigma_{j}}|\sigma_{i}"] \arrow[Rightarrow, rd, bend right=10, "\sigma_{i}|\alpha_{\sigma_{j},\sigma_{i}}"'] & & \sigma_{i}\sigma_{j}\sigma_{i} \\
& \sigma_{i}|\sigma_{j}\sigma_{i} \arrow[Rightarrow, ru, bend right=10, "\alpha_{\sigma_{i},\sigma_{j}\sigma_{i}}"']
\end{tikzcd}
\qquad\qquad
\begin{tikzcd}[cramped, row sep=scriptsize, column sep=scriptsize]
& \sigma_{j}\sigma_{i}|\sigma_{j} \arrow[Rightarrow, rd, bend left=10, "\alpha_{\sigma_{j}\sigma_{i},\sigma_{j}}"] \arrow[dd, phantom, "A_{\sigma_{j},\sigma_{i},\sigma_{j}}"] & \\
\sigma_{j}|\sigma_{i}|\sigma_{j} \arrow[Rightarrow, ru, bend left=10, "\alpha_{\sigma_{j},\sigma_{i}}|\sigma_{j}"] \arrow[Rightarrow, rd, bend right=10, "\sigma_{j}|\alpha_{\sigma_{i},\sigma_{j}}"'] & & \sigma_{i}\sigma_{j}\sigma_{i} \\
& \sigma_{j}|\sigma_{i}\sigma_{j} \arrow[Rightarrow, ru, bend right=10, "\alpha_{\sigma_{j},\sigma_{i}\sigma_{j}}"']
\end{tikzcd}
\]
or of the form
\[
\begin{tikzcd}[cramped, row sep=scriptsize, column sep=scriptsize, matrix scale=0.9, transform shape, nodes={scale=0.9}]
& \sigma_{k}\sigma_{i}|\sigma_{j}\sigma_{i} \arrow[Rightarrow, rd, bend left=10, "\alpha_{\sigma_{k}\sigma_{i},\sigma_{j}\sigma_{i}}"] \arrow[dd, phantom, "A_{\sigma_{k},\sigma_{i},\sigma_{j}\sigma_{i}}"] & \\
\sigma_{k}|\sigma_{i}|\sigma_{j}\sigma_{i} \arrow[Rightarrow, ru, bend left=10, "\alpha_{\sigma_{k},\sigma_{i}}|\sigma_{j}\sigma_{i}"] \arrow[Rightarrow, rd, bend right=10, "\sigma_{k}|\alpha_{\sigma_{i},\sigma_{j}\sigma_{i}}"'] & & \sigma_{k}\sigma_{i}\sigma_{j}\sigma_{i} \\
& \sigma_{k}|\sigma_{i}\sigma_{j}\sigma_{i} \arrow[Rightarrow, ru, bend right=10, "\alpha_{\sigma_{k},\sigma_{i}\sigma_{j}\sigma_{i}}"']
\end{tikzcd}
\qquad\qquad
\begin{tikzcd}[cramped, row sep=scriptsize, column sep=scriptsize, matrix scale=0.9, transform shape, nodes={scale=0.9}]
& \sigma_{k}\sigma_{j}|\sigma_{i}\sigma_{j} \arrow[Rightarrow, rd, bend left=10, "\alpha_{\sigma_{k}\sigma_{j},\sigma_{i}\sigma_{j}}"] \arrow[dd, phantom, "A_{\sigma_{k},\sigma_{j},\sigma_{i}\sigma_{j}}"] & \\
\sigma_{k}|\sigma_{j}|\sigma_{i}\sigma_{j} \arrow[Rightarrow, ru, bend left=10, "\alpha_{\sigma_{k},\sigma_{j}}|\sigma_{i}\sigma_{j}"] \arrow[Rightarrow, rd, bend right=10, "\sigma_{k}|\alpha_{\sigma_{j},\sigma_{i}\sigma_{j}}"'] & & \sigma_{k}\sigma_{i}\sigma_{j}\sigma_{i} \\
& \sigma_{k}|\sigma_{i}\sigma_{j}\sigma_{i} \arrow[Rightarrow, ru, bend right=10, "\alpha_{\sigma_{k},\sigma_{i}\sigma_{j}\sigma_{i}}"']
\end{tikzcd}
\]
with $i$, $j$ and $k$ as above.

We have thus computed the finite coherent presentation of the Artin-Tits
monoid of type $\widetilde{A}_{2}$, which consists of fifteen generating
$1$-cells, twenty-seven generating $2$-cells, and twelve generating
$3$-cells.

Like in~\cite{GGM}, one can further perform a homotopical reduction
procedure. Here, the resulting $\left(3,1\right)$-polygraph contains:
a single generating $0$-cell; the generating $1$-cells $\sigma_{1}$,
$\sigma_{2}$, $\sigma_{3}$; the generating $2$-cells $\alpha_{\sigma_{2},\sigma_{1}\sigma_{2}}$,
$\alpha_{\sigma_{3},\sigma_{2}\sigma_{3}}$, $\alpha_{\sigma_{1},\sigma_{3}\sigma_{1}}$;
and no generating $3$-cells. As a side result, we have thus shown
that Artin's presentation of the Artin-Tits monoid of type $\widetilde{A}_{2}$,
with the empty set of generating $3$-cells, is coherent.

\printbibliography

\end{document}